\newcommand{\noun}[1]{\textsc{#1}}
\numberwithin{equation}{section}
\numberwithin{figure}{section}
\theoremstyle{plain}
\newtheorem{thm}{\protect\theoremname}
\theoremstyle{definition}
\newtheorem{defn}[thm]{\protect\definitionname}
\newenvironment{lyxlist}[1]
	{\begin{list}{}
		{\settowidth{\labelwidth}{#1}
		 \setlength{\leftmargin}{\labelwidth}
		 \addtolength{\leftmargin}{\labelsep}
		 }}
	{\end{list}}
\theoremstyle{remark}
\newtheorem{rem}[thm]{\protect\remarkname}
\theoremstyle{plain}
\newtheorem{lem}[thm]{\protect\lemmaname}
\theoremstyle{plain}
\newtheorem{prop}[thm]{\protect\propositionname}
\theoremstyle{plain}
\newtheorem{cor}[thm]{\protect\corollaryname}
\providecommand{\corollaryname}{Corollary}
\providecommand{\definitionname}{Definition}
\providecommand{\lemmaname}{Lemma}
\providecommand{\propositionname}{Proposition}
\providecommand{\remarkname}{Remark}
\providecommand{\theoremname}{Theorem}
\begin{document}
\begin{titlepage}
\addtocounter{page}{1}
\begin{center}
\includegraphics[scale=0.4]{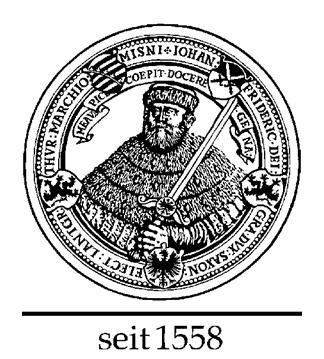}\\
\par\end{center}
\title{ENTROPY-, APPROXIMATION- AND KOLMOGOROV NUMBERS ON QUASI-BANACH SPACES}
\makeatletter
\begin{center}
  {\large\bfseries \@title\par}
\end{center}
\makeatother
\begin{doublespace}
\begin{center}
Bachelor Thesis
\par\end{center}

\begin{center}
to achieve the Academic Degree 
\par\end{center}

\begin{center}
Bachelor of Science (B. Sc.)
\par\end{center}

\begin{center}
in Mathematics
\par\end{center}

\begin{center}
FRIEDRICH-SCHILLER-UNIVERSIT\"{A}T JENA
\par\end{center}

\begin{center}
Fakult\"{a}t f\"{u}r Mathematik und Informatik 
\par\end{center}

\begin{center}
submitted by Marcus Gerhold
\par\end{center}

\begin{center}
born 30/09/1989 in Hohenm\"{o}lsen
\par\end{center}

\begin{center}
Adviser: Prof. Dr. Dorothee D. Haroske 
\par\end{center}

\begin{center}
Jena, 25/08/2011
\par\end{center}
\end{doublespace}
\end{titlepage}
\clearpage
\thispagestyle{empty}
\noindent\textsc{Abstract.}
\thispagestyle{empty}
In this bachelor's thesis we introduce three quantities for linear
and bounded operators on quasi-Banach spaces which are entropy numbers,
approximation numbers and Kolmogorov numbers. At first we establish
the three quantities with some basic properties and try to modify
known content from the Banach space case. We compare each one of them,
with the corresponding other two and give estimates concerning the
mean values and limits. As an example, we analyze the identity operator
between finite dimensional $\ell_{p}$ spaces $\mbox{id : }\left(\ell_{p}^{n}\rightarrow\ell_{q}^{n}\right)$
for $0<p,q\leq\infty$ and give sharp estimates for entropy numbers.
Furthermore we add some known estimates for approximation numbers
and Kolmogorov numbers. At last we examine some renowned connections
of these quantities to spectral theory on infinite dimensional Hilbert
spaces, which are the inequality of Carl\noun{ }and the inequality
of\noun{ }Weyl\noun{. }

\thispagestyle{empty}
\selectlanguage{ngerman}%
\newpage{}

\selectlanguage{english}%
\tableofcontents{}

\newpage{}
An expression of many thanks goes to my adviser Prof. Dr. Dorothee
Haroske for the constant support and always taking a lot of time for
my concerns.

\thispagestyle{empty}

\selectlanguage{ngerman}%
\newpage{}
\selectlanguage{english}%

\part{Preparations}

\section{Norms}

As the title suggests, this bachelor's thesis is about entropy-, approximation-
and Kolmogorov numbers of linear and bounded operators acting between
quasi-Banach spaces. One of the ideas of these quantities is to give
a measure of how compact a compact operator is. To establish these
numbers, we will first need to clarify notations and basic definitions
concerning norms, Banach spaces, $\ell_{p}$ spaces, operators and
quotient maps, which is the main aim of this part. Therefore we start
with the definitions of norms, quasi-norms and $\varrho$- norms.
\begin{defn}
\label{def:Definition Norm}Let $\mathbb{X}$ be a linear space over
a field $\mathbb{K}$. A mapping $\Vert\cdot\Vert\,:\,\mathbb{X}\rightarrow[0,\infty)$
is called norm if it satisfies the following three conditions
\end{defn}

\begin{lyxlist}{00.00.0000}
\item [{(i)}] $\Vert x\Vert=0\Longleftrightarrow x=0$
\item [{(ii)}] $\Vert\lambda x\Vert=\vert\lambda\vert\,\Vert x\Vert$ for
$\lambda\in\mathbb{K}$, $x\in\mathbb{X}$
\item [{(iii)}] $\Vert x+y\Vert\leq\Vert x\Vert+\Vert y\Vert$ for $x,y\in\mathbb{X}$.
\end{lyxlist}
If condition (iii) is substituted by (iii') $\Vert x+y\Vert\leq C\left(\Vert x\Vert+\Vert y\Vert\right)$
with a constant $C\geq1$, which is independent of $x$ and $y$,
we call $\Vert\cdot\Vert$ quasi-norm. If condition (iii) is substituted
by (iii'') $\Vert x+y\Vert^{\varrho}\leq\Vert x\Vert^{\varrho}+\Vert y\Vert^{\varrho}$
for $\varrho\in\left(0,1\right]$, we call $\Vert\cdot\Vert$ $\varrho$
- norm. 

If we talk of Banach spaces as complete normed vector spaces, it is
only natural that the terms quasi-Banach space and $\varrho$ - Banach
space arise, if a vector space is complete with respect to the corresponding
quasi- or $\varrho$ - norm.

As we will show in the following theorem, which can be found in \cite[Ch. 2, Thm. 1.1.]{Constructive Approximation},
it does not matter whether we are dealing with quasi-Banach spaces
or $\varrho$ - Banach spaces, since both are equivalent.
\begin{rem}
As the notation of the closed and open unit balls differ, we will
use

\[
B_{\mathbb{X}}=\left\{ x\in\mathbb{X}:\,\Vert x\Vert_{\mathbb{X}}<1\right\} \,\,\,\mbox{and}\,\,\,\overline{B}_{\mathbb{X}}=\left\{ x\in\mathbb{X}:\,\Vert x\Vert_{\mathbb{X}}\leq1\right\} 
\]

as the open and correspondingly the closed unit ball for a given normed
space.
\end{rem}

\begin{thm}
\label{thm:Normequivalence}Let $\mathbb{X}$ be a linear space. For
each quasi-norm $\Vert\cdot\Vert$ with constant $C\geq1$ exists
an equivalent $\varrho$ - norm $\Vert\cdot\Vert_{0}$ with $\varrho\in\left(0,1\right]$.
\end{thm}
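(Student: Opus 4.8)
The plan is to construct the $\varrho$-norm explicitly by an infimum over finite decompositions, following the classical Aoki--Rolewicz scheme. First I would fix the exponent, choosing $\varrho\in(0,1]$ small enough that $2C^{2}<2^{1/\varrho}$ (equivalently $\varrho<(1+2\log_{2}C)^{-1}$), which is possible since $C\geq1$. I would then record the only consequence of condition (iii') that I need, namely its iteration over dyadic blocks: grouping $2^{k}$ summands in a balanced binary tree yields
\[
\Bigl\Vert \sum_{i=1}^{2^{k}} x_{i} \Bigr\Vert \leq C^{k} \sum_{i=1}^{2^{k}} \Vert x_{i}\Vert,
\]
and hence $\Vert\sum_{i=1}^{m} x_{i}\Vert \leq C^{\lceil \log_{2} m\rceil}\sum_{i}\Vert x_{i}\Vert$ for every $m\in\mathbb{N}$ (padding with zeros).

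Next I would define the candidate
\[
\Vert x\Vert_{0} := \inf\Bigl\{ \Bigl(\sum_{i=1}^{n} \Vert x_{i}\Vert^{\varrho}\Bigr)^{1/\varrho} : x = \sum_{i=1}^{n} x_{i},\ n\in\mathbb{N}\Bigr\}.
\]
Homogeneity (condition (ii)) is immediate from rescaling a decomposition, and the $\varrho$-inequality (iii''), $\Vert x+y\Vert_{0}^{\varrho}\leq\Vert x\Vert_{0}^{\varrho}+\Vert y\Vert_{0}^{\varrho}$, follows at once by concatenating near-optimal decompositions of $x$ and $y$. Taking the trivial decomposition $x=x$ gives the easy estimate $\Vert x\Vert_{0}\leq\Vert x\Vert$.

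The real content, and the step I expect to be the main obstacle, is the reverse bound: a constant $K=K(C,\varrho)$ with $\Vert x\Vert\leq K\Vert x\Vert_{0}$ for all $x$; this simultaneously yields definiteness (condition (i), since $\Vert x\Vert_{0}=0\Rightarrow\Vert x\Vert=0\Rightarrow x=0$) and the equivalence of the two quasi-norms. It suffices to show $\Vert\sum_{i=1}^{n}x_{i}\Vert\leq K(\sum_{i}\Vert x_{i}\Vert^{\varrho})^{1/\varrho}$ for every finite family. I would normalize $\sum_{i}\Vert x_{i}\Vert^{\varrho}=1$ and sort so that $\Vert x_{1}\Vert\geq\Vert x_{2}\Vert\geq\cdots$; monotonicity forces $i\Vert x_{i}\Vert^{\varrho}\leq\sum_{j\leq i}\Vert x_{j}\Vert^{\varrho}\leq1$, hence $\Vert x_{i}\Vert\leq i^{-1/\varrho}$. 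Partitioning the indices into dyadic blocks $H_{k}=\{i:2^{k}\leq i<2^{k+1}\}$ and applying the iterated inequality inside each block gives, with $y_{k}:=\sum_{i\in H_{k}}x_{i}$,
\[
\Vert y_{k}\Vert \leq C^{k}\cdot 2^{k}\cdot 2^{-k/\varrho} = \bigl(2C\,2^{-1/\varrho}\bigr)^{k} =: \theta^{k}.
\]

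Finally I would recombine the blocks. The delicate point is that reassembling the block-sums reintroduces powers of $C$, so telescoping the quasi-triangle inequality yields $\Vert\sum_{k}y_{k}\Vert\leq\sum_{k}C^{k+1}\Vert y_{k}\Vert\leq C\sum_{k}(C\theta)^{k}$. The choice of $\varrho$ above is exactly what makes $C\theta=2C^{2}2^{-1/\varrho}<1$, so this is a convergent geometric series and $K:=C/(1-C\theta)$ works. This controlled accumulation of the constant upon recombination is where I expect most of the care to be needed; everything else is bookkeeping. Combining $\Vert x\Vert_{0}\leq\Vert x\Vert\leq K\Vert x\Vert_{0}$ then shows that $\Vert\cdot\Vert_{0}$ is an equivalent $\varrho$-norm, proving Theorem \ref{thm:Normequivalence}.
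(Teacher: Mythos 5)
Your proof is correct, and your construction of $\Vert\cdot\Vert_{0}$ as an infimum over finite decompositions is the same as the paper's; but your argument for the decisive reverse inequality $\Vert\sum_{i}x_{i}\Vert\leq K\left(\sum_{i}\Vert x_{i}\Vert^{\varrho}\right)^{1/\varrho}$ is genuinely different. The paper fixes the canonical Aoki--Rolewicz exponent determined by $(2C)^{\varrho}=2$, introduces the rounding function $N(f)$ taking values in powers of $C_{0}=2C$, and runs an induction in which two summands with equal $N$-value are merged, the identity $C_{0}^{\varrho}=2$ making the merged $N$-values exactly additive in the $\varrho$-th power; this yields the constant $A=C_{0}^{2}=4C^{2}$ at the sharp exponent $\varrho=(1+\log_{2}C)^{-1}$. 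You instead take $\varrho$ strictly below a smaller threshold (so that $2C^{2}2^{-1/\varrho}<1$), sort the summands, extract the pointwise bound $\Vert x_{i}\Vert\leq i^{-1/\varrho}$ from the normalization, estimate each dyadic block by the binary-tree iteration giving the factor $C^{k}$, and sum a geometric series when recombining the blocks. Your route avoids the rounding function and the case analysis in the induction, at the price of a quantitatively worse exponent: you need $\varrho<(1+2\log_{2}C)^{-1}$, whereas the paper attains $\varrho=(1+\log_{2}C)^{-1}$. Since the theorem asserts only the existence of some $\varrho\in(0,1]$, both arguments prove it. A further merit of your write-up is that you note explicitly that definiteness of $\Vert\cdot\Vert_{0}$ (property (i)) is a consequence of the lower bound $\Vert x\Vert\leq K\Vert x\Vert_{0}$; the paper calls (i) immediate, although strictly speaking the implication $\Vert x\Vert_{0}=0\Rightarrow x=0$ also requires that equivalence.
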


\begin{proof}
With $C$ as constant of the quasi-norm $\Vert\cdot\Vert$ we define
$C_{0}:=2C$. With $C_{0}\geq2$, and $f,g\in\mathbb{X}$ we get the
result $\Vert f+g\Vert\leq C\left(\Vert f\Vert+\Vert g\Vert\right)\leq2C\max\left\{ \Vert f\Vert,\Vert g\Vert\right\} =C_{0}\max\left\{ \Vert f\Vert,\Vert g\Vert\right\} $.
By induction we derive that for $f_{1},\ldots,f_{m}\in\mathbb{X}$,
$m\in\mathbb{N}$

\begin{equation}
\Vert f_{1}+\ldots+f_{m}\Vert\leq\max_{1\leq j\leq m}\left(C_{0}^{j}\Vert f_{j}\Vert\right).\label{eq:Theorem1.a}
\end{equation}

Since $C$ is given, we define $\varrho$ by $C_{0}^{\varrho}=2$
and the mapping $\Vert\cdot\Vert_{0}\,:\,\mathbb{X}\,\rightarrow[0,\infty)$
by

\begin{equation}
\Vert f\Vert_{0}:=\inf_{f=f_{1}+\ldots+f_{m}}\left\{ \Vert f_{1}\Vert^{\varrho}+\ldots+\Vert f_{m}\Vert^{\varrho}\right\} ^{1/\varrho},\label{eq:Theorem1.b}
\end{equation}

where the infimum is taken over all decompositions of $f$. At first
we observe, that $C_{0}^{\varrho}=2\Longleftrightarrow\varrho=\frac{\ln2}{\ln C_{0}}$,
hence $\varrho\in(0,1]$. We now want to show that $\Vert\cdot\Vert_{0}$
is our desired $\varrho$ - norm. Therefore we need to investigate
the three properties of Definition \ref{def:Definition Norm} of the
mapping. Properties (i) and (ii) of $\varrho$ - norms can be derived
immediately. Property (iii'') is given through

\begin{eqnarray*}
\Vert f+g\Vert_{0}^{\varrho} & = & \inf_{f+g=h_{1}+\ldots+h_{m}}\left\{ \Vert h_{1}\Vert^{\varrho}|+\ldots+\Vert h_{m}\Vert^{\varrho}\right\} \\
 & \leq & \inf_{f=f_{1}+\ldots+f_{m}}\left\{ \Vert f_{1}\Vert^{\varrho}+\ldots+\Vert f_{m}\Vert^{\varrho}\right\} \\
 &  & +\inf_{g=g_{1}+\ldots+g_{m}}\left\{ \Vert g_{1}\Vert^{\varrho}+\ldots+\Vert g_{m}\Vert^{\varrho}\right\} \\
 & = & \Vert f\Vert_{0}^{\varrho}+\Vert g\Vert_{0}^{\varrho}.
\end{eqnarray*}

Hence $\Vert\cdot\Vert_{0}$ is a $\varrho$ - norm. Now we need to
show that $\Vert\cdot\Vert$ and $\Vert\cdot\Vert_{0}$ are equivalent
by showing, that there exist constants $a,\,A>0$ in a way, that $a\Vert\cdot\Vert_{0}\leq\Vert\cdot\Vert\leq A\Vert\cdot\Vert_{0}$.
The first inequality can easily be shown. Since the infimum in $\Vert\cdot\Vert_{0}$
is taken over all decompositions, it is also taken over the trivial
decomposition $f=f$ , which yields $\Vert f\Vert_{0}=\inf_{f=f_{1}+\ldots+f_{m}}\left\{ \Vert f_{1}\Vert^{\varrho}+\ldots+\Vert f_{m}\Vert^{\varrho}\right\} ^{1/\varrho}\leq\inf_{f=f_{1}}\left\{ \Vert f_{1}\Vert^{\varrho}\right\} ^{1/\varrho}=\Vert f\Vert$.
Thus $a=1$. 

For the other inequality we define

\[
N(f)=\begin{cases}
0 & \mbox{ if }f=0\\
C_{0}^{k} & \mbox{ if }C_{0}^{k-1}<\Vert f\Vert\leq C_{0}^{k}.
\end{cases}
\]

Clearly we get 

\begin{equation}
C_{0}^{-1}N(f)\leq\Vert f\Vert\leq N(f).\label{eq:Theorem1.d}
\end{equation}

At first we show by induction 

\begin{equation}
\Vert f_{1}+\ldots+f_{m}\Vert\leq C_{0}\left(N(f_{1})^{\varrho}+\ldots+N(f_{m})^{\varrho}\right)^{1/\varrho}.\label{eq:Theorem1.c}
\end{equation}

The case $m=1$ follows immediately. We suppose that \eqref{eq:Theorem1.c}
has been established for $m=n-1$. Now for given $f_{1},\ldots,f_{n}\in\mathbb{X}$
we can assume without loss of generality, that $\Vert f_{1}\Vert\geq\ldots\geq\Vert f_{n}\Vert$
(otherwise we renumber all $f_{i}$). If all $N(f_{i})$, $i=1,\ldots,n$
are distinct, we have

\[
C_{0}^{j}||f_{j}||\overset{\eqref{eq:Theorem1.d}}{\leq}C_{0}^{j}N(f_{j})\leq C_{0}N(f_{1})\leq C_{0}\left(N(f_{1})^{\varrho}+\ldots+N(f_{n})^{\varrho}\right)^{1/\varrho}.
\]

\eqref{eq:Theorem1.c} follows from \eqref{eq:Theorem1.a}. 

Let us now consider the case, where for certain $j\in\left\{ 1,\ldots,n-1\right\} $
$N(f_{j})=N(f_{j+1})=C_{0}^{l}$ , $l\in\mathbb{Z}$. Using \eqref{eq:Theorem1.a},
we have $\Vert f_{j}+f_{j+1}\Vert\leq C_{0}\max\left\{ \Vert f_{j}\Vert,\Vert f_{j+1}\Vert\right\} =C_{0}^{l+1}$
and furthermore $N(f_{j}+f_{j+1})^{\varrho}\leq C_{0}^{\varrho(l+1)}=2^{(l+1)}=2^{l}+2^{l}=N(f_{j})^{\varrho}+N(f_{j+1})^{\varrho}$.
Combining this result with our induction hypothesis, yields

\begin{eqnarray*}
\Vert f_{1}+\ldots+f_{m}\Vert & \overset{\mbox{i.h.}}{\leq} & C_{0}\left(N(f_{1})^{\varrho}+\ldots+N(f_{j}+f_{j+1})^{\varrho}+\ldots+N(f_{m})^{\varrho}\right)^{1/\varrho}\\
 & \leq & C_{0}\left(N(f_{1})^{\varrho}+\ldots+N(f_{j})^{\varrho}+N(f_{j+1})^{\varrho}+\ldots+N(f_{m})^{\varrho}\right)^{1/\varrho}.
\end{eqnarray*}

Thus we have proved \eqref{eq:Theorem1.c} from which the wanted inequality
can be derived from

\begin{eqnarray*}
\Vert f_{1}+\ldots+f_{m}\Vert & \leq & C_{0}^{2}\left(\left(\frac{N(f_{1})}{C_{0}}\right)^{\varrho}+\ldots+\left(\frac{N(f_{m})}{C_{0}}\right)^{\varrho}\right)^{1/\varrho}\\
 & \overset{\eqref{eq:Theorem1.d}}{\leq} & C_{0}^{2}\left(\Vert f_{1}\Vert^{\varrho}+\ldots+\Vert f_{m}\Vert^{\varrho}\right)^{1/\varrho}.
\end{eqnarray*}

Now we can take the infimum over all decompositions of $f$ (as it
is done in the definition of $\Vert\cdot\Vert_{0}$) and get $A=C_{0}^{2}$,
and hence the equivalence of the norms $\Vert\cdot\Vert$ and $\Vert\cdot\Vert_{0}$. 
\end{proof}
This theorem will already be useful in the next statement, which is
well known and part of many lectures. Its proof, in the case for Banach
spaces can be found in \cite[Ch. 3, Thm. 1.1.]{Constructive Approximation},
but is done here for quasi-Banach spaces.
\begin{thm}
\label{thm:bestapproximation}Let $\left[\mathbb{X},\Vert\cdot\Vert\right]$
be a quasi-normed, linear vector space with constant $C_{\mathbb{X}}$
and $U\subset\mathbb{X}$ a linear subspace with $\dim U<n<\infty$.
Then for every element $f\mathbb{\in X}$, there exists an element
$g\in U$ with

\[
\Vert f-g\Vert=\inf_{h\in U}\Vert f-h\Vert.
\]
\end{thm}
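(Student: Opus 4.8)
The plan is to run the classical compactness proof of best approximation, after first using Theorem \ref{thm:Normequivalence} to equip $\mathbb{X}$ with a genuine metric. Write $d:=\inf_{h\in U}\Vert f-h\Vert$ and choose a minimizing sequence $(g_{k})_{k\in\mathbb{N}}\subset U$ with $\Vert f-g_{k}\Vert\to d$. The quasi-triangle inequality yields $\Vert g_{k}\Vert\leq C_{\mathbb{X}}(\Vert g_{k}-f\Vert+\Vert f\Vert)$, and since $\Vert g_{k}-f\Vert\to d$ the right-hand side is bounded in $k$; hence $(g_{k})$ lies in a closed ball of $U$ of some finite radius $R$. If I can show that such a ball is sequentially compact and that $h\mapsto\Vert f-h\Vert$ respects limits, I can extract a convergent subsequence whose limit attains $d$.

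For the compactness I would pass to the equivalent $\varrho$-norm $\Vert\cdot\Vert_{0}$ furnished by Theorem \ref{thm:Normequivalence}. Its defining inequality $\Vert x+y\Vert_{0}^{\varrho}\leq\Vert x\Vert_{0}^{\varrho}+\Vert y\Vert_{0}^{\varrho}$ makes $d_{0}(x,y):=\Vert x-y\Vert_{0}^{\varrho}$ a translation-invariant metric, so that $\mathbb{X}$, and in particular $U$, is a metric vector space whose topology agrees with the one induced by $\Vert\cdot\Vert$; moreover $x\mapsto\Vert x\Vert_{0}^{\varrho}$ is $1$-Lipschitz for $d_{0}$, and hence $\Vert\cdot\Vert_{0}$ is continuous. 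The decisive step is then a finite-dimensional Heine--Borel statement: fixing a basis $u_{1},\dots,u_{m}$ of $U$ with $m=\dim U<\infty$ and identifying $U$ with $\mathbb{K}^{m}$, I would prove that $\Vert\cdot\Vert_{0}$ is equivalent to the Euclidean norm on $\mathbb{K}^{m}$ by the usual argument --- one inequality from the $\varrho$-triangle inequality applied to the coordinate expansion, the reverse one from compactness of the Euclidean unit sphere together with continuity of $\Vert\cdot\Vert_{0}$. Consequently closed bounded subsets of $U$ are compact, and the bounded sequence $(g_{k})$ has a subsequence $g_{k_{j}}\to g$ with $g\in U$.

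Finally I would pass to the limit in $\Vert f-g_{k_{j}}\Vert\to d$: continuity of the norm gives $\Vert f-g_{k_{j}}\Vert\to\Vert f-g\Vert$, so that $\Vert f-g\Vert=d=\inf_{h\in U}\Vert f-h\Vert$ and $g$ is the desired best approximation. I expect the genuine obstacle to be the finite-dimensional compactness step, i.e. the equivalence of $\Vert\cdot\Vert_{0}$ with the Euclidean norm on the finite-dimensional space $U$; this is the only place where the hypothesis $\dim U<\infty$ is essential, and routing the argument through the continuous $\varrho$-norm rather than the bare quasi-norm is precisely what keeps both the compactness and the closing limit clean. The boundedness of the minimizing sequence and the final limit computation are then routine once the metric and the compactness are in place.
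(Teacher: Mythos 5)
Your proof follows the same route as the paper's: pass to the equivalent $\varrho$-norm of Theorem \ref{thm:Normequivalence}, take a minimizing sequence, show it is bounded, invoke finite-dimensional compactness to extract a convergent subsequence, and pass to the limit. The one place where you go beyond the paper is welcome: the paper simply asserts that a bounded sequence in a finite-dimensional subspace is relatively compact, whereas you sketch the actual Heine--Borel argument (equivalence of $\Vert\cdot\Vert_{0}$ with the Euclidean norm on $U\cong\mathbb{K}^{m}$, using the continuity of $\Vert\cdot\Vert_{0}$ that the $\varrho$-triangle inequality supplies). Up to and including the extraction of $g_{k_{j}}\to g\in U$, your argument is sound.

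The gap is in your final step. Your minimizing sequence was chosen for the original quasi-norm, $\Vert f-g_{k}\Vert\to d=\inf_{h\in U}\Vert f-h\Vert$, but the continuity you established is only that of $\Vert\cdot\Vert_{0}$. A quasi-norm need not be continuous with respect to the topology it generates, and the equivalence $a\Vert\cdot\Vert_{0}\leq\Vert\cdot\Vert\leq A\Vert\cdot\Vert_{0}$ transfers the topology, boundedness and compactness, but neither pointwise continuity nor the attainment of infima. Concretely, on $\mathbb{R}^{2}$ set $\Vert(x,y)\Vert:=|x|+|y|$ if $y\neq0$ and $\Vert(x,0)\Vert:=2|x|$; this satisfies (i), (ii), (iii') of Definition \ref{def:Definition Norm} with $C=2$, yet $(1,1-t)\to(1,0)$ while $\Vert(1,1-t)\Vert\to1\neq2=\Vert(1,0)\Vert$, and for $f=(1,1)$ and $U=\mathrm{span}\left\{ (0,1)\right\} $ the infimum $\inf_{h\in U}\Vert f-h\Vert=1$ is not attained at all. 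So the step ``$\Vert f-g_{k_{j}}\Vert\to\Vert f-g\Vert$'' genuinely fails; and if you instead read ``the norm'' as $\Vert\cdot\Vert_{0}$, the step fails differently, because $(g_{k})$ was not chosen to be minimizing for $\Vert\cdot\Vert_{0}$. The paper evades this precise trap by running the whole minimizing argument inside $\Vert\cdot\Vert_{0}$, where the $\varrho$-triangle inequality justifies the limit; the difficulty then resurfaces only in its closing sentence, which transfers the conclusion back to $\Vert\cdot\Vert$ ``by equivalence'' --- a transfer the counterexample above shows cannot hold for arbitrary quasi-norms, since equivalent quasi-norms need not have the same (or any) best approximants. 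The honest repair, for you as for the paper, is to prove attainment for the continuous norm $\Vert\cdot\Vert_{0}$ (taking the minimizing sequence with respect to it), or to add the hypothesis that $\Vert\cdot\Vert$ itself is continuous --- which is automatic for $\varrho$-norms and holds for the $\ell_{p}$ quasi-norms to which the theorem is later applied.
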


\begin{proof}
As we have shown in the preceding theorem, we find an equivalent $\varrho$
- Norm for every quasi-norm. So let $\Vert\cdot\Vert_{0}$ be an equivalent
$\varrho$ - norm for the quasi-norm $\Vert\cdot\Vert$. By definition
of the infimum, there exists a sequence $\left(h_{k}\right)_{k\in\mathbb{N}}\in U$
for which $\Vert f-h_{n}\Vert_{0}\overset{n\rightarrow\infty}{\longrightarrow}\inf_{h\in U}\Vert f-h\Vert_{0}$.
We also have $\Vert h_{n}\Vert_{0}^{\varrho}\leq\Vert f\Vert_{0}^{\varrho}+\Vert f-h_{n}\Vert_{0}^{\varrho}$.
This means, that $\left(h_{n}\right)_{n\in\mathbb{N}}$ is a bounded
sequence on a finite dimensional subspace, hence $\left(h_{n}\right)_{n\in\mathbb{N}}$
is relatively compact . Therefore we can find a subsequence $\left(h_{n_{j}}\right)_{j\in\mathbb{N}}\subset\left(h_{n}\right)_{n\in\mathbb{N}}$
and an element $g\in\mathbb{X}$ with $\Vert h_{n_{j}}-g\Vert_{0}\overset{j\rightarrow\infty}{\longrightarrow}0$.
Furthermore we get
\[
\Vert f-g\Vert_{0}^{\varrho}\leq\Vert f-h_{n_{j}}\Vert_{0}^{\varrho}+\Vert h_{n_{j}}-g\Vert_{0}^{\varrho}\leq\Vert f-g\Vert_{0}^{\varrho}+\Vert g-h_{n_{j}}\Vert+\Vert h_{n_{j}}-g\Vert_{0}^{\varrho}\overset{j\rightarrow\infty}{\longrightarrow}\Vert f-g\Vert_{0}^{\varrho}
\]

and by taking the $\varrho$ - th root $\Vert f-h_{n_{j}}\Vert_{0}\overset{j\rightarrow\infty}{\longrightarrow}\Vert f-g\Vert_{0}$.
On the other hand $\Vert f-h_{n}\Vert_{0}\overset{n\rightarrow\infty}{\longrightarrow}\inf_{h\in U}\Vert f-h\Vert_{0}$
and therefore $\Vert f-g\Vert_{0}=\inf_{h\in U}\Vert f-h\Vert_{0}$.
In particular we gain $g\in U$. With the established equivalence,
this result is also valid for the quasi norm $\Vert\cdot\Vert$.
\end{proof}

\section{Sequence Spaces $\ell_{p}^{n}$ and $\ell_{p}$}
\begin{defn}
For given $0<p<\infty$, $n\in\mathbb{N}$ and a field $\mathbb{K}$
(which is $\mathbb{R}$ or $\mathbb{C}$) we define
\end{defn}

\begin{lyxlist}{00.00.0000}
\item [{(i)}] $\ell_{p}^{n}:=\left\{ a\in\mathbb{K}^{n}:\,\sum_{j=1}^{n}\vert a_{j}\vert^{p}<\infty\right\} $
\\
and $\Vert\cdot\Vert_{p}:=\left(\sum_{j=1}^{n}\vert a_{j}\vert^{p}\right)^{1/p}$ 
\item [{(ii)}] $\ell_{\infty}^{n}:=\left\{ a\in\mathbb{K}^{n}:\,\sup_{1\leq j\leq n}a_{j}<\infty\right\} $
\\
and $\Vert\cdot\Vert_{\infty}:=\sup_{1\leq j\leq n}\vert a_{j}\vert$ 
\item [{(iii)}] $\ell_{p}\left(\mathbb{N}\right):=\left\{ a=\left(a_{j}\right)_{j\in\mathbb{N}}\subset\mathbb{K}:\,\sum_{j=1}^{\infty}\vert a_{j}\vert^{p}<\infty\right\} $
\\
and $\Vert\cdot\Vert_{p}:=\left(\sum_{j=1}^{\infty}\vert a_{j}\vert^{p}\right)^{1/p}$ 
\item [{(iv)}] $\ell_{\infty}:=\left\{ a=\left(a_{j}\right)_{j\in\mathbb{N}}\subset\mathbb{K}:\,\sup_{j\in\mathbb{N}}\vert a_{j}\vert<\infty\right\} $
\\
and $\Vert\cdot\Vert_{\infty}:=\sup_{j\in\mathbb{N}}\vert a_{j}\vert$ 
\end{lyxlist}
\begin{rem}
Since the sequence spaces are well known, we shall use the following
two statements without proof in this bachelor's thesis. They can be
found in \cite[Sect. 1.2.]{H=0000F6here Analysis} 
\end{rem}

\begin{enumerate}
\item For $0<p<1$ $\left[\ell_{p}\left(\mathbb{N}\right);\Vert\cdot\Vert_{p}\right]$
are quasi-Banach spaces.
\item For $1\leq p\leq\infty$ $\left[l_{p}\left(\mathbb{N}\right);\,\Vert\cdot\Vert_{p}\right]$
are Banach spaces.
\end{enumerate}

\section{Linear and Compact Operators}
\begin{defn}
Let $\left[\mathbb{X},\left\Vert \cdot\right\Vert _{\mathbb{X}}\right]$
and $\left[\mathbb{Y},\left\Vert \cdot\right\Vert _{\mathbb{Y}}\right]$
be quasi-normed spaces.
\end{defn}

\begin{lyxlist}{00.00.0000}
\item [{(i)}] A linear mapping $A\,:\,\mathbb{X}\longrightarrow\mathbb{Y}$
is called bounded operator, \\
if $\forall x\in\mathbb{X}\,\,\exists c>0:\,\Vert Ax\Vert_{\mathbb{Y}}\leq c\Vert x\Vert_{\mathbb{X}}$.
\item [{(ii)}] $\mathcal{L}\left(\mathbb{X},\mathbb{Y}\right):=\left\{ A\,:\,\mathbb{X}\longrightarrow\mathbb{Y},\,A\mbox{ linear and bounded}\right\} $
(If $\mathbb{X}=\mathbb{Y}$ we will write $\mathcal{L}\left(\mathbb{X}\right):=\mathcal{L}\left(\mathbb{X},\mathbb{X}\right).$)
\item [{(iii)}] If $T\in\mathcal{L}\left(\mathbb{X},\mathbb{Y}\right)$
we define $\mathcal{R}\left(T\right)=\left\{ y\in\mathbb{Y}:\,\exists x\in\mathbb{X}:\,Tx=y\right\} $
as range of the operator $T$.
\item [{(iv)}] An operator $A\in\mathcal{L}\left(\mathbb{X},\mathbb{Y}\right)$
is called compact, if the range of every bounded set in $\mathbb{X}$
is relatively compact in $\mathbb{Y}.$
\item [{(v)}] $\mathcal{K}\left(\mathbb{X},\mathbb{Y}\right):=\left\{ A\in\mathcal{L}\left(\mathbb{X},\mathbb{Y}\right),\,A\mbox{ compact}\right\} $.
\item [{(vi)}] $\mbox{rank }T:=\dim\mathcal{R}\left(T\right)$
\item [{(vii)}] $A$ is called finite rank operator if $\mathcal{R}\left(A\right)\subseteq\mathbb{Y}$
and $\mbox{rank }A<\infty$.
\end{lyxlist}
\begin{rem}
\label{rem:Operator property} Again, these definitions as well as
many conclusions of them are well known and we will take them for
granted. For completeness we shall list a few, which will be used
in this bachelor's thesis. Their proofs however can be found in \cite[Folg. 1.18.; Bem. in 2.1.3.; Folg. 2.12.]{HaroskeHA}.
\end{rem}

\begin{enumerate}
\item If $\left[\mathbb{X},d\right]$ is an arbitrary complete metric space
and $A\subset\mathbb{X}$, then $A$ is relatively compact if and
only if there exists a finite $\varepsilon$ - net for $A$ for every
$\varepsilon>0$.
\item $K\in\mathcal{K}\left(\mathbb{X},\mathbb{Y}\right)$ if and only if
$K\left(\overline{B}_{\mathbb{X}}\right)$ is relatively compact in
$\mathbb{Y}$.
\item Let $\left[\mathbb{X},\Vert\cdot\Vert_{\mathbb{X}}\right]$ be a quasi-normed
space, $\left[\mathbb{Y},\Vert\cdot\Vert_{\mathbb{Y}}\right]$ a quasi-Banach
space and $A\in\mathcal{L}\left(\mathbb{X},\mathbb{Y}\right)$. If
there exists a sequence of operators $\left(A_{n}\right)_{n}\subset\mathcal{L}\left(\mathbb{X},\mathbb{Y}\right)$
and $\mbox{rank }A_{k}=n_{k}<\infty\,\forall k\in\mathbb{N}$ for
which $\Vert A-A_{n}\Vert\overset{n\rightarrow\infty}{\longrightarrow}0$.
Then $A\in\mathcal{K}\left(\mathbb{X},\mathbb{Y}\right)$. (As mentioned
above, the proof can be found in \cite[Folg. 2.12.]{HaroskeHA} for
Banach spaces. It is certified quickly that the fact stays correct
for quasi-Banach spaces.)
\end{enumerate}

\section{Quotient Spaces and the Quotient Map}
\begin{defn}
\label{def:Quotientendefinition}Let $\left[\mathbb{X},\Vert\cdot\Vert_{\mathbb{X}}\right]$
a normed vector space over a field $\mathbb{K}$ and $U\subset\mathbb{X}$
linear subspace of $\mathbb{X}$ .
\end{defn}

\begin{lyxlist}{00.00.0000}
\item [{(i)}] We call $\mathbb{X}/U:=\left\{ x+U;\,x\in\mathbb{X}\right\} =\left\{ \left[x\right]_{U}:\,x\in\mathbb{X}\right\} $
the quotient space of $\mathbb{X}$ with respect to $U$.
\item [{(ii)}] The mapping $Q_{U}^{\mathbb{X}}:\,\mathbb{X}\longrightarrow\mathbb{X}/U$
is called canonical quotient map of $\mathbb{X}$ with respect to
$U$.
\item [{(iii)}] We define addition as $\left[x\right]_{U}+\left[y\right]_{U}=x+y+U=\left[x+y\right]_{U}$
and multiplication with a scalar $\lambda\in\mathbb{K}$ as $\lambda\left[x\right]_{U}=\lambda x+U=\left[\lambda x\right]_{U}$.
\item [{(iv)}] For $x\in\mathbb{X}$ we define $\Vert\left[x\right]_{U}\Vert_{\mathbb{X}/U}:=\inf_{y\in U}\Vert x+y\Vert_{\mathbb{X}}$
\end{lyxlist}
\begin{rem}
Since we only need one result, following from these definitions, which
is that $\left[\mathbb{X}/U,\,\Vert\cdot\Vert_{\mathbb{X}/U}\right]$
is a normed vector space, we won't prove it in this bachelor's thesis.
The proof can be found in \cite[Satz 3.13.]{HaroskeHA}. 
\end{rem}

The following lemma is taken from \cite[p. 49]{Carl=000026Stephani}
and is slightly modified here.
\begin{lem}
\label{lem:quotient ball}Let $\left[\mathbb{X},\Vert\cdot\Vert_{\mathbb{X}}\right]$
be a quasi-normed vector space and $U\subset\mathbb{X}$ a linear
subspace of $\mathbb{X}$. Then

\[
Q_{U}^{\mathbb{X}}\left(B_{\mathbb{X}}\right)=B_{\mathbb{X}/U}.
\]
\end{lem}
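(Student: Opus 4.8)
The plan is to establish the set equality $Q_U^{\mathbb{X}}(B_{\mathbb{X}}) = B_{\mathbb{X}/U}$ by proving the two inclusions separately, relying only on the definition of the quotient norm $\Vert[x]_U\Vert_{\mathbb{X}/U} = \inf_{y\in U}\Vert x+y\Vert_{\mathbb{X}}$ from Definition \ref{def:Quotientendefinition} and on the fact that both balls in question are \emph{open}. The latter point is the crux of the argument, so I will keep track of where strict inequalities enter.

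First I would verify the inclusion $Q_U^{\mathbb{X}}(B_{\mathbb{X}}) \subseteq B_{\mathbb{X}/U}$. Take any $x \in B_{\mathbb{X}}$, so that $\Vert x\Vert_{\mathbb{X}} < 1$. Since the infimum defining the quotient norm runs over all $y \in U$, it is in particular bounded above by the value at the trivial choice $y = 0$, giving
\[
\Vert Q_U^{\mathbb{X}}(x)\Vert_{\mathbb{X}/U} = \Vert[x]_U\Vert_{\mathbb{X}/U} = \inf_{y\in U}\Vert x+y\Vert_{\mathbb{X}} \leq \Vert x\Vert_{\mathbb{X}} < 1,
\]
so $Q_U^{\mathbb{X}}(x) \in B_{\mathbb{X}/U}$. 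This direction requires no structural input beyond monotonicity of the infimum.

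For the reverse inclusion $B_{\mathbb{X}/U} \subseteq Q_U^{\mathbb{X}}(B_{\mathbb{X}})$, let $z \in B_{\mathbb{X}/U}$ and write $z = [x]_U$ for some representative $x \in \mathbb{X}$ with $\Vert[x]_U\Vert_{\mathbb{X}/U} < 1$, that is $\inf_{y\in U}\Vert x+y\Vert_{\mathbb{X}} < 1$. The key step—and the only genuine obstacle—is to pass from this infimum back to an honest element of the open unit ball. Because the infimum is \emph{strictly} less than $1$, the definition of infimum guarantees a specific $y_0 \in U$ with $\Vert x + y_0\Vert_{\mathbb{X}} < 1$; here the strictness of the inequality is essential, and is exactly what the openness of the balls buys us (for closed balls the infimum need not be attained, and this passage could fail).

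Finally I would set $x' := x + y_0$, which satisfies $\Vert x'\Vert_{\mathbb{X}} < 1$, hence $x' \in B_{\mathbb{X}}$. Since $y_0 \in U$, the cosets coincide, $[x']_U = [x + y_0]_U = [x]_U = z$, so that $z = Q_U^{\mathbb{X}}(x') \in Q_U^{\mathbb{X}}(B_{\mathbb{X}})$. Combining the two inclusions yields the claimed equality. I expect the argument to be short; the one point deserving emphasis in the write-up is that the result genuinely uses open rather than closed unit balls, which is why the infimum can be beaten by a concrete representative.
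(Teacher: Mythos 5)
Your proof is correct and follows essentially the same route as the paper: the forward inclusion via the trivial choice $y=0$ in the infimum, and the reverse inclusion by producing a representative of the coset lying in the open unit ball of $\mathbb{X}$. In fact, your write-up makes explicit the one step the paper merely asserts---that strictness of the inequality $\inf_{y\in U}\Vert x+y\Vert_{\mathbb{X}}<1$ guarantees a concrete $y_{0}\in U$ with $\Vert x+y_{0}\Vert_{\mathbb{X}}<1$, which is precisely where openness of the balls is used.
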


\begin{proof}
Let $\left[x\right]_{U}\in Q_{U}^{\mathbb{X}}\left(B_{\mathbb{X}}\right)$,
then $\left[x\right]_{U}=\left\{ \,x-u;\,u\in U\right\} $ where $x\in B_{\mathbb{X}}$
hence $\Vert x\Vert_{\mathbb{X}}<1$. By Definition \ref{def:Quotientendefinition}
as infimum, we get 

\[
\Vert\left[x\right]_{U}\Vert_{\mathbb{X}/U}=\inf_{u\in U}\Vert x-u\Vert_{\mathbb{X}}\underset{\tiny0\in U}{\leq}\Vert x\Vert_{\mathbb{X}}<1.
\]

On the other hand, if we take $\left[x\right]_{U}\in\mathbb{X}/U$
with $\Vert\left[x\right]_{u}\Vert_{\mathbb{X}/U}<1$ we know that
there exists $x\in\mathbb{X}$, such that $Q_{U}^{\mathbb{X}}x=\left[x\right]_{U}$
and $\Vert x\Vert_{\mathbb{X}}<1$. This yields $\left[x\right]_{U}\in Q_{U}^{\mathbb{X}}\left(B_{\mathbb{X}}\right)$.
\end{proof}

\part{Entropy-, Approximation- and Kolmogorov Numbers}

\section{Entropy Numbers on quasi-Banach Spaces}

In this part we will now introduce the three quantities, beginning
with entropy numbers. There is more than one way to introduce them,
but in the proceeding definition we follow the notation of \cite[Subsect. 1.3.1., Def. 1.]{Edmunds and Triebel},
which is based on dyadic entropy numbers.
\begin{defn}
Let $\mathbb{X}$ and $\mathbb{Y}$ be quasi-Banach spaces, $n\in\mathbb{N}$
and further $T\in\mathcal{L}\left(\mathbb{X},\mathbb{Y}\right)$.
Then we define 

\[
e_{n}\left(T\right):=\inf\left\{ \varepsilon>0\,:\,\exists y_{1},\ldots,y_{2^{n-1}}\in\mathbb{Y}\,:\,T\left(\overline{B}_{\mathbb{X}}\right)\subseteq\bigcup_{i=1}^{2^{n-1}}\left\{ y_{i}+\varepsilon\overline{B}_{\mathbb{Y}}\right\} \right\} 
\]

as the $n$ - th (dyadic) entropy number of the operator $T$.
\end{defn}

\begin{rem}
This is a definition of entropy numbers based on an operator, but
it is also possible to introduce them first on an arbitrary set :

\[
e_{n}\left(A,\mathbb{X}\right):=\inf\left\{ \varepsilon>0\,:\,\exists x_{1},\ldots,x_{2^{n-1}}\in\mathbb{X}\,:\,A\subseteq\bigcup_{i=1}^{2^{n-1}}\left\{ x_{i}+\varepsilon\overline{B}_{\mathbb{X}}\right\} \right\} 
\]

from which the above definition is established through $e_{n}\left(T\right)=e_{n}\left(T\left(\overline{B}_{\mathbb{X}}\right),\mathbb{Y}\right)$.
\end{rem}

The following theorem, though altered in notation to fit in the context
of quasi-Banach spaces, can be found in \cite[Sect. 1.3.]{Carl=000026Stephani}.
The last part of the theorem, which is (C$_{e}$) was slightly modified
taken from \cite[Satz 3.30.]{HaroskeAT}.
\begin{thm}
\label{thm:(Properties-of-e)}(Properties of $e_{n}\left(T\right)$)

Let $\mathbb{X}$,$\mathbb{Y}$ and $\mathbb{W}$ be quasi-Banach
spaces with constants $C_{\mathbb{X}}$,$C_{\mathbb{Y}}$ and $C_{\mathbb{W}}$.
Further let $T\in\mathcal{L}\left(\mathbb{X},\mathbb{Y}\right)$.
\end{thm}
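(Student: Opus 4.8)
The plan is to treat each listed property by a covering argument applied directly to the set $T(\overline{B}_{\mathbb{X}})$, exploiting the description $e_n(T)=e_n\!\left(T(\overline{B}_{\mathbb{X}}),\mathbb{Y}\right)$ from the preceding remark. The recurring mechanism is that a cover of one image set by $2^{m-1}$ balls, combined with a cover by $2^{n-1}$ balls, produces $2^{m-1}\cdot 2^{n-1}=2^{(m+n-1)-1}$ balls, which is exactly what the index $m+n-1$ on the left-hand side of the additivity and multiplicativity estimates is designed to absorb. Throughout I would use only set inclusions, the homogeneity property (ii) of the quasi-norm, and the quasi-triangle inequality (iii$'$); since no genuine triangle inequality is available, the constants $C_{\mathbb{X}},C_{\mathbb{Y}},C_{\mathbb{W}}$ must be carried along explicitly.

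First I would dispose of monotonicity and the relation to the operator quasi-norm. Monotonicity $e_n(T)\ge e_{n+1}(T)\ge 0$ is immediate: enlarging $n$ enlarges the number $2^{n-1}$ of admissible centres, so the infimum in the definition is taken over a larger family and can only decrease. For the endpoint $n=1$ one covers $T(\overline{B}_{\mathbb{X}})$ by a single ball; taking the centre $0$ and radius $\Vert T\Vert$ shows $e_1(T)\le\Vert T\Vert$, while the reverse estimate uses the symmetry of $\overline{B}_{\mathbb{X}}$: if $Tx$ and $-Tx$ both lie in $y_1+\varepsilon\overline{B}_{\mathbb{Y}}$, then (iii$'$) applied to $2Tx=(Tx-y_1)-(-Tx-y_1)$ gives $\Vert Tx\Vert_{\mathbb{Y}}\le C_{\mathbb{Y}}\varepsilon$, hence $\Vert T\Vert\le C_{\mathbb{Y}}e_1(T)$. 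In the Banach case $C_{\mathbb{Y}}=1$ this collapses to the familiar identity $e_1(T)=\Vert T\Vert$.

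Next come additivity and multiplicativity, which form the technical core. For $S,T\in\mathcal{L}(\mathbb{X},\mathbb{Y})$ I would fix admissible radii $\sigma>e_m(S)$ and $\tau>e_n(T)$ with covers $S(\overline{B}_{\mathbb{X}})\subseteq\bigcup_i(u_i+\sigma\overline{B}_{\mathbb{Y}})$ and $T(\overline{B}_{\mathbb{X}})\subseteq\bigcup_j(v_j+\tau\overline{B}_{\mathbb{Y}})$. Using $(S+T)(\overline{B}_{\mathbb{X}})\subseteq S(\overline{B}_{\mathbb{X}})+T(\overline{B}_{\mathbb{X}})$ together with the inclusion $\sigma\overline{B}_{\mathbb{Y}}+\tau\overline{B}_{\mathbb{Y}}\subseteq C_{\mathbb{Y}}(\sigma+\tau)\overline{B}_{\mathbb{Y}}$ coming from (iii$'$), one obtains $2^{m-1}2^{n-1}$ balls of radius $C_{\mathbb{Y}}(\sigma+\tau)$, and hence $e_{m+n-1}(S+T)\le C_{\mathbb{Y}}\left(e_m(S)+e_n(T)\right)$. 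Multiplicativity is cleaner: writing $T(\overline{B}_{\mathbb{X}})\subseteq\bigcup_i(y_i+\tau\overline{B}_{\mathbb{Y}})$ and $S(\overline{B}_{\mathbb{Y}})\subseteq\bigcup_j(z_j+\sigma\overline{B}_{\mathbb{W}})$, applying $S$ and scaling by $\tau$ yields $ST(\overline{B}_{\mathbb{X}})\subseteq\bigcup_{i,j}\left(Sy_i+\tau z_j+\sigma\tau\overline{B}_{\mathbb{W}}\right)$, so $e_{m+n-1}(ST)\le e_m(S)e_n(T)$ with no constant, since only homogeneity and inclusions enter. I expect the main obstacle to be precisely the bookkeeping of the quasi-norm constant in the additivity step, and, should the theorem state an iterated version for sums of several operators, ensuring the accumulating constant is tracked correctly via (iii$''$) or via the equivalent $\varrho$-norm furnished by Theorem \ref{thm:Normequivalence}.

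Finally I would prove the compactness characterization (C$_e$), namely that $T\in\mathcal{K}(\mathbb{X},\mathbb{Y})$ if and only if $e_n(T)\to 0$. This is where completeness of $\mathbb{Y}$ enters, through items 1 and 2 of Remark \ref{rem:Operator property}, which identify relative compactness of $T(\overline{B}_{\mathbb{X}})$ with the existence of finite $\varepsilon$-nets. If $e_n(T)\to 0$, then for each $\varepsilon>0$ some $e_n(T)<\varepsilon$ produces a finite ($2^{n-1}$-element) $\varepsilon$-net, so $T(\overline{B}_{\mathbb{X}})$ is totally bounded and therefore relatively compact. Conversely, compactness yields for each $\varepsilon>0$ a finite $\varepsilon$-net of some cardinality $N$; choosing $n$ with $2^{n-1}\ge N$ gives $e_n(T)\le\varepsilon$, and monotonicity then forces $e_n(T)\to 0$. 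Assembling the four parts completes the proof.
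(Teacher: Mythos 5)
Your proposal is correct and follows essentially the same route as the paper's own proof: the same covering arguments with the count $2^{m-1}\cdot 2^{n-1}=2^{(m+n-1)-1}$ for (A$_{e}$) and (P$_{e}$), the same symmetry trick $2Tx=\varepsilon\left(\eta_{1}-\eta_{2}\right)$ yielding $\Vert T\Vert\leq C_{\mathbb{Y}}e_{1}\left(T\right)$, and the same finite $\varepsilon$-net characterization of compactness for (C$_{e}$). The only differences are cosmetic: you phrase additivity via Minkowski sums of the image sets, and you sketch rather than carry out the $\varrho$-norm variant of (A$_{e}$), which the paper writes out explicitly by rerunning the covering argument with $\Vert\lambda y+\mu z\Vert_{\mathbb{Y}}^{\varrho}\leq\lambda^{\varrho}+\mu^{\varrho}$.
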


\begin{lyxlist}{00.00.0000}
\item [{(M$_{e}$)}] $C_{\mathbb{Y}}e_{1}\left(T\right)\geq\Vert T\Vert\geq e_{1}\left(T\right)\geq e_{2}\left(T\right)\geq\ldots\geq0$ 
\item [{(A$_{e}$)}] Let $S\in\mathcal{L}\left(\mathbb{X},\mathbb{Y}\right)$,
$n,m\in\mathbb{N}$, then we have \\
$e_{m+n-1}\left(S+T\right)\leq C_{\mathbb{Y}}\left(e_{m}\left(S\right)+e_{n}\left(T\right)\right)$
which is equivalent to \\
$e_{m+n-1}\left(S+T\right)^{\varrho}\leq e_{m}\left(S\right)^{\varrho}+e_{n}\left(T\right)^{\varrho}$
for an equivalent $\varrho$ - norm with \\
$\varrho\in\left(0,1\right]$.
\item [{(P$_{e}$)}] Let $S\in\mathcal{L}\left(\mathbb{Y},\mathbb{W}\right)$,
$n,m\in\mathbb{N}$ , then we have $e_{m+n-1}\left(ST\right)\leq e_{m}\left(S\right)e_{n}\left(T\right)$.
\item [{(C$_{e}$)}] $T\in\mathcal{K}\left(\mathbb{X},\mathbb{Y}\right)\,\Longleftrightarrow\,\lim_{n\rightarrow\infty}e_{n}\left(T\right)=0$
\end{lyxlist}
\begin{proof}
By definition of the entropy numbers as infimum over all $\varepsilon>0$
, the monotonicity (M$_{e}$) is immediately derived, since $\inf_{x\in A}\Vert x\Vert\leq\inf_{x\in B}\Vert x\Vert$
if $B\subseteq A$. To prove $C_{\mathbb{Y}}e_{1}\left(T\right)\geq\Vert T\Vert\geq e_{1}\left(T\right)$,
we show two inequalities. The first is obtained through

\[
T\left(\overline{B}_{\mathbb{X}}\right)\subseteq\Vert T\Vert\overline{B}_{\mathbb{Y}}\underset{\tiny y_{1}=0}{\Longrightarrow}\exists y_{1}\in\mathbb{Y}\,:\,T\left(\overline{B}_{\mathbb{X}}\right)\subseteq\left\{ y_{1}+\Vert T\Vert\overline{B}_{\mathbb{Y}}\right\} 
\]

and taking the infimum over all such $\varepsilon>0$ where for some
$y_{1}\in\mathbb{Y}$, $T\left(\overline{B}_{\mathbb{X}}\right)\subseteq\left\{ y_{1}+\varepsilon\overline{B}_{\mathbb{Y}}\right\} $
is valid and we get $e_{1}\left(T\right)\leq\Vert T\Vert$. Now we
prove the opposite inequality and let $\varepsilon>e_{1}\left(T\right)\geq0$.
This yields that there exists $y_{1}\in\mathbb{Y}$ for which $T\left(\overline{B}_{\mathbb{X}}\right)\subseteq\left\{ y_{1}+\varepsilon\overline{B}_{\mathbb{Y}}\right\} $.
Furthermore for an arbitrary $x\in\overline{B}_{\mathbb{X}}$ there
exist $\eta_{1},\eta_{2}\in\overline{B}_{\mathbb{Y}}$ for which $Tx=y_{1}+\varepsilon\eta_{1}\mbox{ and }T\left(-x\right)=-T\left(x\right)=y_{1}+\varepsilon\eta_{2}$.
Subtracting the second from the first equality yields, 

\[
2Tx=\varepsilon\left(\eta_{1}-\eta_{2}\right)\Longleftrightarrow Tx=\frac{\varepsilon}{2}\left(\eta_{1}-\eta_{2}\right).
\]
 
\[
\Longrightarrow\Vert Tx\Vert_{\mathbb{Y}}\leq\frac{\varepsilon}{2}\Vert\left(\eta_{1}-\eta_{2}\right)\Vert_{\mathbb{Y}}\leq C_{\mathbb{Y}}\frac{\varepsilon}{2}(\underset{\leq1}{\underbrace{\Vert\eta_{1}\Vert_{\mathbb{Y}}}}+\underset{\leq1}{\underbrace{\Vert\eta_{2}\Vert_{\mathbb{Y}})}}\leq C_{\mathbb{Y}}\varepsilon
\]

Because the right-hand side is independent of $x$, the inequality
is maintained, if we take the supremum over all those $x\in\overline{B}_{\mathbb{X}}$.
Hence $\Vert T\Vert\leq C_{\mathbb{Y}}\varepsilon$. Now we can take
the infimum over all $\varepsilon>e_{1}\left(T\right)$ and get $\Vert T\Vert\leq C_{\mathbb{Y}}e_{1}\left(T\right)$.

Let us now have a look at the additivity (A$_{e}$). With given $S$
and $T$, we choose arbitrary $\lambda>e_{n}\left(T\right)$ and $\mu>e_{m}\left(S\right)$.
For these exist $y_{1},\ldots,y_{N},\,z_{1},\ldots,z_{M}$, where
$N\leq2^{n-1}$ and $M\leq2^{m-1}$, such that

\begin{equation}
T\left(\overline{B}_{\mathbb{X}}\right)\subseteq\bigcup_{i=1}^{N}\left\{ y_{i}+\lambda\overline{B}_{\mathbb{Y}}\right\} \,\,\,\mbox{ and }\,\,\,S\left(\overline{B}_{\mathbb{X}}\right)\subseteq\bigcup_{i=1}^{M}\left\{ z_{i}+\mu\overline{B}_{\mathbb{Y}}\right\} .\label{eq:entropy1}
\end{equation}

These inclusions allow us, for any given $x\in\overline{B}_{\mathbb{X}}$
to choose one of the $y_{i}$ and $z_{j}$ for $i\in\left\{ 1,\ldots,N\right\} $,
$j\in\left\{ 1,\ldots,M\right\} $ such that 

\begin{eqnarray*}
Tx & \in & \left\{ y_{i}+\lambda\overline{B}_{\mathbb{Y}}\right\} \,\,\,\mbox{and }\,\,\,Sx\in\left\{ z_{j}+\mu\overline{B}_{\mathbb{Y}}\right\} 
\end{eqnarray*}

Therefore it follows, that for $x\in\overline{B}_{\mathbb{X}}$ exist
$y$,$z\in\overline{B}_{\mathbb{Y}}$ such that $\left(S+T\right)x=y_{i}+z_{j}+\lambda y+\mu z$.
However we find that 
\begin{equation}
\Vert\lambda y+\mu z\Vert_{\mathbb{Y}}\leq C_{\mathbb{Y}}(\lambda\underset{\tiny\leq1}{\underbrace{\Vert y\Vert}}+\mu\underset{\tiny\leq1}{\underbrace{\Vert z\Vert}})\leq C_{\mathbb{Y}}\left(\lambda+\mu\right).\label{eq:Beweismalanders}
\end{equation}

\begin{eqnarray*}
\Longrightarrow & \left(S+T\right)x\in & \left\{ y_{i}+z_{j}+C_{\mathbb{Y}}\left(\lambda+\mu\right)\overline{B}_{\mathbb{Y}}\right\} \\
\overset{\tiny\eqref{eq:entropy1}}{\Longrightarrow} & \left(S+T\right)\overline{B}_{\mathbb{X}}\in & \bigcup_{i=1}^{N}\bigcup_{j=1}^{M}\left\{ y_{i}+z_{j}+C_{\mathbb{Y}}\left(\lambda+\mu\right)\overline{B}_{\mathbb{Y}}\right\} 
\end{eqnarray*}

To obtain the wanted inequality, we need to have a look at the number
of elements in the following set, which is

\begin{equation}
\#\left\{ y_{i}+z_{j},\,i=1,\ldots,N,\,j=1,\ldots,M\right\} \leq NM\leq2^{n-1+m-1}=2^{\left(n+m-1\right)-1}.\label{eq:entropy3}
\end{equation}

\[
\Longrightarrow e_{n+m-1}\left(S+T\right)\leq C_{\mathbb{Y}}\left(\lambda+\mu\right)
\]

And taking the infimum over all those $\lambda$ and $\mu$, we get 

\[
e_{n+m-1}\left(S+T\right)\leq C_{\mathbb{Y}}\left(e_{n}\left(T\right)+e_{m}\left(S\right)\right).
\]
 As we have shown in Theorem \ref{thm:Normequivalence} we can find
an equivalent $\varrho$ - norm, such that $e_{m+n-1}\left(S+T\right)^{\varrho}\leq e_{m}\left(S\right)^{\varrho}+e_{n}\left(T\right)^{\varrho}$.
In particular, we would have in \eqref{eq:Beweismalanders}

\[
\Vert\lambda y+\mu z\Vert_{\mathbb{Y}}^{\varrho}\leq\lambda^{\varrho}\underset{\tiny\leq1}{\underbrace{\Vert y\Vert_{\mathbb{Y}}^{\varrho}}}+\mu^{\varrho}\underset{\tiny\leq1}{\underbrace{\Vert z\Vert_{\mathbb{Y}}^{\varrho}}}\leq\lambda^{\varrho}+\mu^{\varrho}
\]

if we considered a $\varrho$ - norm. The next step would be

\begin{eqnarray*}
\Longrightarrow & \left(S+T\right)x\in & \left\{ y_{i}+z_{j}+\left(\lambda^{\varrho}+\mu^{\varrho}\right)^{1/\varrho}\overline{B}_{\mathbb{Y}}\right\} \\
\overset{\tiny\eqref{eq:entropy1}}{\Longrightarrow} & \left(S+T\right)\overline{B}_{\mathbb{X}}\in & \bigcup_{i=1}^{N}\bigcup_{j=1}^{M}\left\{ y_{i}+z_{j}+\left(\lambda^{\varrho}+\mu^{\varrho}\right)^{1/\varrho}\overline{B}_{\mathbb{Y}}\right\} .
\end{eqnarray*}

By the same arguments as above, we would get

\[
e_{n+m-1}\left(S+T\right)\leq\left(\lambda^{\varrho}+\mu^{\varrho}\right)^{1/\varrho}
\]

and by taking the infimum over all $\lambda$ and $\mu$, we had 

\[
e_{n+m-1}\left(S+T\right)^{\varrho}\leq e_{n}\left(S\right)^{\varrho}+e_{m}\left(T\right)^{\varrho}.
\]

The multiplicativity (P$_{e}$) can be shown by similar arguments.
That is for another given quasi-Banach space $\mathbb{W}$ and operators
$T\in\mathcal{L}\left(\mathbb{X},\mathbb{Y}\right)$ as well as $S\in\mathcal{L}\left(\mathbb{Y},\mathbb{W}\right)$
we can choose $\lambda>e_{n}\left(T\right)$ and $\mu>e_{m}\left(S\right)$,
such that

\begin{equation}
T\left(\overline{B}_{\mathbb{X}}\right)\subseteq\bigcup_{i=1}^{N}\left\{ y_{i}+\lambda\overline{B}_{\mathbb{Y}}\right\} \,\,\,\mbox{and}\,\,\,S\left(\overline{B}_{\mathbb{Y}}\right)\subseteq\bigcup_{j=1}^{M}\left\{ z_{j}+\mu\overline{B}_{\mathbb{W}}\right\} \label{eq:entropy2}
\end{equation}

for $y_{1},\ldots,y_{N}$, $z_{1},\ldots,z_{M}$ and $N<2^{n-1}$,
$M<2^{m-1}$. The right-hand inclusion is equivalent to $S\left(\lambda\overline{B}_{\mathbb{Y}}\right)=\lambda S\left(\overline{B}_{\mathbb{Y}}\right)\subseteq\bigcup_{j=1}^{M}\left\{ \lambda z_{j}+\lambda\mu\overline{B}_{\mathbb{W}}\right\} $,
so that applying the operator $S$ to the left-hand side of \eqref{eq:entropy2}
amounts to 

\[
ST\left(\overline{B}_{\mathbb{X}}\right)\subseteq\bigcup_{i=1}^{N}\bigcup_{j=1}^{M}\left\{ Sy_{i}+\lambda z_{j}+\lambda\mu\overline{B}_{\mathbb{W}}\right\} .
\]

Counting the elements as in \eqref{eq:entropy3} and taking the infimum
over all such $\varepsilon>0$, yields $e_{n+m-1}\left(ST\right)\leq\lambda\mu$.
The last step is taking the infimum over all these $\lambda>e_{n}\left(T\right)$
and $\mu>e_{m}\left(S\right)$. Therefore $e_{n+m-1}\left(ST\right)\leq e_{n}\left(T\right)e_{m}\left(S\right)$.

The last property, which is compactness (C$_{e}$) is immediately
established through the definition of relatively compactness and Remark
\ref{rem:Operator property}. $\lim_{n\rightarrow\infty}e_{n}\left(T\right)=0$
means in particular, that for $\varepsilon>0$ exists $n_{0}\in\mathbb{N}$
such that for all $n\geq n_{0}$ we find that $e_{n}\left(T\right)<\varepsilon$.
Choosing those $y_{1},\ldots,y_{2^{n-1}}$, we have found a finite
$\varepsilon$- net for $T\left(\overline{B}_{\mathbb{X}}\right)$.
This is possible for all $\varepsilon>0$, hence $T\left(\overline{B}_{\mathbb{X}}\right)$
is relatively compact. On the other hand, if $T\left(\overline{B}_{\mathbb{X}}\right)$
is relatively compact, there exists a finite $\varepsilon$ - net
for every $\varepsilon>0$. Since this is only a question of definition,
we can choose only these $\varepsilon$ - nets, which have a dyadic
number of elements. Hence $\lim_{n\rightarrow\infty}e_{n}\left(T\right)=0$. 
\end{proof}

\section{Approximation Numbers on quasi-Banach Spaces}

Let us now define approximation numbers. By doing so, we follow the
notation of \cite[Subsect. 1.3.1., Def. 2.]{Edmunds and Triebel}.
\begin{defn}
\label{def:Approximationszahlen}Let $\mathbb{X}$ and $\mathbb{Y}$
be quasi-Banach spaces and $T\in\mathcal{L}\left(\mathbb{X},\mathbb{Y}\right)$.
For $n\in\mathbb{N}$ we define 

\begin{equation}
a_{n}\left(T\right):=\inf\left\{ \Vert T-S\Vert\,:\,S\in\mathcal{L}\left(\mathbb{X},\mathbb{Y}\right),\,\mbox{rank }S<n\right\} \label{eq:approximation numer}
\end{equation}

as $n$ - th approximation number of the operator $T$.
\end{defn}

As before, the following theorem and its proof in case of Banach spaces
can be found in \cite[Sect. 2.1.]{Carl=000026Stephani} and is adopted
in this bachelor's thesis to fit in the context of quasi Banach spaces.
\begin{thm}
\label{thm:Properties a}(Properties of $a_{n}\left(T\right)$)

Let $\mathbb{X}$,$\mathbb{Y}$ and $\mathbb{W}$ be quasi-Banach
spaces with constants $C_{\mathbb{X}}$,$C_{\mathbb{Y}}$ and $C_{\mathbb{W}}$.
Further let $T\in\mathcal{L}\left(\mathbb{X},\mathbb{Y}\right)$.
\end{thm}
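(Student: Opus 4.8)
The plan is to follow the same template used for the entropy numbers in Theorem~\ref{thm:(Properties-of-e)}, exploiting now that $a_{n}(T)$ is an infimum over finite-rank approximants rather than over coverings. The two combinatorial facts replacing the dyadic counting are the subadditivity of rank, $\mathrm{rank}(A+B)\le\mathrm{rank}\,A+\mathrm{rank}\,B$, and the submultiplicativity of the operator quasi-norm, $\Vert AB\Vert\le\Vert A\Vert\,\Vert B\Vert$; the latter holds \emph{without} any quasi-norm constant, since it follows directly from the definition of the operator norm and needs no triangle inequality. Whenever a genuine \emph{sum} of operators must be estimated I would invoke the quasi-triangle inequality with constant $C_{\mathbb{Y}}$, and for the sharpened $\varrho$-versions I would pass to the equivalent $\varrho$-norm $\Vert\cdot\Vert_{0}$ furnished by Theorem~\ref{thm:Normequivalence}.

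For the monotonicity block (M$_{a}$) I would first note that the admissible set in \eqref{eq:approximation numer} grows with $n$, because $\mathrm{rank}\,S<n$ implies $\mathrm{rank}\,S<n+1$; an infimum over a larger set is smaller, which gives $a_{1}(T)\ge a_{2}(T)\ge\cdots\ge0$. The identification $a_{1}(T)=\Vert T\Vert$ is immediate, since $\mathrm{rank}\,S<1$ forces $S=0$, so the only competitor is $T$ itself. For additivity (A$_{a}$) I would choose, for given $\varepsilon>0$, operators $A$ with $\mathrm{rank}\,A<m$ and $B$ with $\mathrm{rank}\,B<n$ nearly realizing $a_{m}(S)$ and $a_{n}(T)$; then $A+B$ has $\mathrm{rank}(A+B)\le(m-1)+(n-1)<m+n-1$, so it is an admissible competitor for $a_{m+n-1}(S+T)$ and
\[
a_{m+n-1}(S+T)\le\Vert(S+T)-(A+B)\Vert\le C_{\mathbb{Y}}\bigl(\Vert S-A\Vert+\Vert T-B\Vert\bigr).
\]
Letting $\varepsilon\to0$ yields the quasi-norm form, and repeating the estimate with $\Vert u+v\Vert_{0}^{\varrho}\le\Vert u\Vert_{0}^{\varrho}+\Vert v\Vert_{0}^{\varrho}$ yields $a_{m+n-1}(S+T)^{\varrho}\le a_{m}(S)^{\varrho}+a_{n}(T)^{\varrho}$.

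The multiplicativity (P$_{a}$) is the step I expect to be the real obstacle, since the naive expansion of $ST$ produces a remainder whose rank is too large. The device I would use is to take $L$ with $\mathrm{rank}\,L<m$ approximating $S$ and $M$ with $\mathrm{rank}\,M<n$ approximating $T$, and to write $ST=P+(S-L)(T-M)$ with $P:=SM+L(T-M)$. The crucial point is the regrouping: $\mathrm{rank}(SM)\le\mathrm{rank}\,M<n$ and $\mathrm{rank}\bigl(L(T-M)\bigr)\le\mathrm{rank}\,L<m$, so $\mathrm{rank}\,P\le(m-1)+(n-1)<m+n-1$ and $P$ is admissible for $a_{m+n-1}(ST)$. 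Since the remainder obeys $\Vert(S-L)(T-M)\Vert\le\Vert S-L\Vert\,\Vert T-M\Vert$, I obtain $a_{m+n-1}(ST)\le\Vert S-L\Vert\,\Vert T-M\Vert$, and taking the two infima separately gives $a_{m+n-1}(ST)\le a_{m}(S)\,a_{n}(T)$.

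Finally, the rank property ($\mathrm{rank}\,T<n\Rightarrow a_{n}(T)=0$, because then $T$ itself competes) is immediate, and the ideal-type estimate $a_{n}(BTA)\le\Vert B\Vert\,a_{n}(T)\,\Vert A\Vert$ follows by the same bookkeeping, using that $BSA$ has rank $\le\mathrm{rank}\,S$ together with $\Vert B(T-S)A\Vert\le\Vert B\Vert\,\Vert T-S\Vert\,\Vert A\Vert$. If the statement also asserts a one-sided compactness conclusion ($a_{n}(T)\to0\Rightarrow T\in\mathcal{K}(\mathbb{X},\mathbb{Y})$), I would deduce it directly from Remark~\ref{rem:Operator property}(3), while being careful \emph{not} to claim the converse, which in general fails without an approximation-property hypothesis on $\mathbb{Y}$. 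The only genuine care needed throughout is tracking where the constant $C_{\mathbb{Y}}$ must appear (only in the additive estimates, never in the multiplicative ones) and getting the rank regrouping in (P$_{a}$) exactly right.
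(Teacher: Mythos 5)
Your arguments for (M$_a$), (A$_a$), (P$_a$), (R$_a$) and (C$_a$) coincide with the paper's proof essentially line by line: the same monotonicity-of-infima observation, the same sum $A+B$ of near-optimal approximants with $\mathrm{rank}(A+B)\le(m-1)+(n-1)$ for additivity, and for multiplicativity exactly the same algebraic device --- your $P=SM+L(T-M)$ with remainder $(S-L)(T-M)$ is, up to renaming of letters, the paper's $M=RT+SL-RL$ with remainder $(S-R)(T-L)$, including the identical regrouping used to bound the rank. Your caution about claiming only the one-sided implication in (C$_a$), via Remark \ref{rem:Operator property}, also matches the paper, which deduces compactness from the convergence of finite-rank operators and explicitly notes in a subsequent remark that the converse fails even for Banach spaces.

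There is, however, one genuine omission: the theorem also asserts the norming property (N$_a$), namely $\dim\mathbb{X}\geq n\;\Longrightarrow\;a_{n}(\mbox{id}_{\mathbb{X}})=1$, and your proposal never addresses it. This needs a short but real argument, not just bookkeeping: the upper bound $a_{n}(\mbox{id}_{\mathbb{X}})\leq a_{1}(\mbox{id}_{\mathbb{X}})=\Vert\mbox{id}_{\mathbb{X}}\Vert=1$ comes from monotonicity, but for the lower bound one must observe that any $L\in\mathcal{L}(\mathbb{X},\mathbb{X})$ with $\mathrm{rank}\,L<n\leq\dim\mathbb{X}$ has nontrivial kernel, so there is $x_{0}$ with $\Vert x_{0}\Vert_{\mathbb{X}}=1$ and $Lx_{0}=0$, whence
\[
\Vert\mbox{id}_{\mathbb{X}}-L\Vert\;\geq\;\Vert x_{0}-Lx_{0}\Vert_{\mathbb{X}}\;=\;\Vert x_{0}\Vert_{\mathbb{X}}\;=\;1,
\]
and taking the infimum over all such $L$ gives $a_{n}(\mbox{id}_{\mathbb{X}})\geq1$. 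This is how the paper closes that case; without it your proof covers only five of the six listed properties.
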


\begin{lyxlist}{00.00.0000}
\item [{(M$_{a}$)}] $\Vert T\Vert=a_{1}\left(T\right)\geq a_{2}\left(T\right)\geq\ldots\geq0$
\item [{(A$_{a}$)}] Let $S\in\mathcal{L}\left(\mathbb{X},\mathbb{Y}\right)$,
$n,m\in\mathbb{N}$, then we have \\
$a_{m+n-1}\left(S+T\right)\leq C_{\mathbb{Y}}\left(a_{m}\left(S\right)+a_{n}\left(T\right)\right)$
which is equivalent to \\
$a_{m+n-1}\left(S+T\right)^{\varrho}\leq a_{m}\left(S\right)^{\varrho}+a_{n}\left(T\right)^{\varrho}$
for an equivalent $\varrho$ - norm with \\
$\varrho\in\left(0,1\right]$.
\item [{(P$_{a}$)}] Let $S\in\mathcal{L}\left(\mathbb{Y},\mathbb{W}\right)$,
$n,m\in\mathbb{N}$ , then we have $a_{m+n-1}\left(ST\right)\leq a_{m}\left(S\right)a_{n}\left(T\right)$.
\item [{(R$_{a}$)}] $\mbox{rank }T<n\,\Longrightarrow\,a_{n}\left(T\right)=0$ 
\item [{(N$_{a}$)}] $\dim\mathbb{X}\geq n\,\Longrightarrow\,a_{n}\left(\mbox{id}_{\mathbb{X}\rightarrow\mathbb{X}}\right)=a_{n}(\mbox{id}_{\mathbb{X}})=1$
\item [{(C$_{a}$)}] $\lim_{n\rightarrow\infty}a_{n}\left(T\right)=0\,\,\,\Longrightarrow\,\,\,T\in\mathcal{K}\left(\mathbb{X},\mathbb{Y}\right)$
\end{lyxlist}
\begin{proof}
The monotonicity (M$_{a}$) is obviously derived, since $\inf_{x\in A}||x||\leq\inf_{x\in B}||x||$
if $B\subseteq A$. Having a closer look at $a_{1}$ we get

\[
a_{1}\left(T\right)=\inf\left\{ \Vert T-S\Vert\,:\,\underset{S\equiv0}{\underbrace{S\in\mathcal{L}\left(\mathbb{X},\mathbb{Y}\right),\,\mbox{rank }S<1}}\right\} =\Vert T\Vert.
\]

To prove the additivity (A$_{a}$) we start with $\lambda>a_{n}\left(T\right)$
and $\mu>a_{m}\left(S\right)$, where $n,m\in\mathbb{N}.$ That means
nothing else, than

\[
\exists L,R\in\mathcal{L}\left(\mathbb{X},\mathbb{Y}\right),\,\mbox{rank }L<n,\,\mbox{rank }R<m\,:\,\Vert T-L\Vert<\lambda\,\,\,\mbox{and}\,\,\,\Vert S-R\Vert<\mu.
\]

Now we define $M:=L+R$. Clearly $M\in\mathcal{L}\left(\mathbb{X},\mathbb{Y}\right)$
and $\mbox{rank }M<n+m-1$. Therefore 

\begin{eqnarray*}
\Vert\left(S+T\right)-M\Vert & = & \sup_{\Vert x\Vert_{\mathbb{X}}=1}\Vert\left[\left(S+T\right)-M\right]x\Vert_{\mathbb{Y}}\\
 & \leq & C_{\mathbb{Y}}\left(\sup_{\Vert x\Vert_{\mathbb{X}}=1}\Vert\left(S-R\right)x\Vert+\sup_{\Vert x\Vert_{\mathbb{X}}=1}\Vert\left(T-L\right)x\Vert\right)\\
 & = & C_{\mathbb{Y}}\left(\Vert S-R\Vert+\Vert T-L\Vert\right)\leq C_{\mathbb{Y}}\left(\lambda+\mu\right).
\end{eqnarray*}

If we now take the infimum over all such operators $M\in\mathcal{L}\left(\mathbb{X},\mathbb{Y}\right)$
with $\mbox{rank }M<n+m-1$, we get $a_{n+m-1}\left(S+T\right)<C_{\mathbb{Y}}\left(\lambda+\mu\right)$.
Taking the infimum over $\lambda$ and $\mu$ amounts to $a_{n+m-1}\left(S+T\right)\leq C_{\mathbb{Y}}\left(a_{n}\left(T\right)+a_{m}\left(S\right)\right)$
, which is again equivalent to $a_{n+m-1}\left(S+T\right)^{\varrho}\leq a_{n}\left(S\right)^{\varrho}+a_{m}\left(T\right)^{\varrho}$.
We will not prove the equivalence, since the idea is the same, as
we have seen in (A$_{e}$) of Theorem \ref{thm:(Properties-of-e)}.

The multiplicativity (P$_{a}$) is established through a similar proof,
in which we let $\lambda>a_{n}\left(T\right)$ and $\mu>a_{m}\left(S\right)$
for given $S\in\mathcal{L}\left(\mathbb{Y},\mathbb{W}\right)$ and
$n,m\in\mathbb{N}$. As above we get 

\begin{eqnarray*}
 & \exists L\in & \mathcal{L}\left(\mathbb{X},\mathbb{Y}\right),\,\mbox{rank }L<n\,:\,\Vert T-L\Vert<\lambda\\
\mbox{and} & \exists R\in & \mathcal{L}\left(\mathbb{Y},\mathbb{W}\right),\,\mbox{rank }R<m\,:\,\Vert S-R\Vert<\mu.
\end{eqnarray*}

We go on by defining $M:=RT+SL-RL\in\mathcal{L}\left(\mathbb{X},\mathbb{W}\right)$.
Hence

\begin{eqnarray*}
\Vert ST-M\Vert & = & \Vert ST-RT-SL+RL\Vert=\Vert\left(S-R\right)\left(T-L\right)\Vert\\
 & \leq & \Vert S-R\Vert\Vert T-L\Vert<\lambda\mu.
\end{eqnarray*}

Furthermore $\mbox{rank }M\leq\mbox{rank }SL+\mbox{rank }\left(R\left(T-L\right)\right)\leq\mbox{rank }L+\mbox{rank }R<n+m-1$.
Knowing, that there exists such an operator, we can take the infimum
over these, which amounts to $a_{n+m-1}\left(ST\right)\leq\lambda\mu$.
Taking the infimum over $\lambda$ and $\mu$ yields 
\[
a_{n+m-1}\left(ST\right)\leq a_{n}\left(S\right)a_{m}\left(T\right)
\]

The rank property (R$_{a}$) is quickly derived, because of $\mbox{rank }T<n$,
it is a fair competitor for the infimum, which results to

\[
0\leq a_{n}\left(T\right)=\inf\left\{ \Vert T-S\Vert:\,S\in\mathcal{L}\left(\mathbb{X},\mathbb{Y}\right),\mbox{rank }S<n\right\} \leq\Vert T-T\Vert=0.
\]

Next we will prove (N$_{a}$) for which the monotonicity is needed
in $a_{n}\left(\mbox{id}_{\mathbb{X}}\right)\leq a_{1}\left(\mbox{id}_{\mathbb{X}}\right)=\Vert\mbox{id}_{\mathbb{X}}\Vert=1$.
If we can show, that $a_{n}\left(\mbox{id}_{\mathbb{X}}\right)\geq1$
the equality is established. For that, let $\dim\mathbb{X}\geq n$
and $L\in\mathcal{L}\left(\mathbb{X},\mathbb{X}\right)$ and $\mbox{rank }L<n$.
Hence there exists $x_{0}\in\mathbb{X},\,x_{0}\neq0$ for which $Lx_{0}=0$.
Without loss of generality, we can say, that $\Vert x_{0}\Vert_{\mathbb{X}}=1$
(otherwise, we scale it).

\[
\Longrightarrow1=\Vert x_{0}\Vert_{\mathbb{X}}=\Vert x_{0}-\underset{=0}{\underbrace{Lx_{0}}}\Vert_{\mathbb{X}}\leq\sup_{\Vert x\Vert_{\mathbb{X}}=1}\Vert\left(\mbox{id}_{\mathbb{X}}-L\right)x\Vert_{\mathbb{X}}=\Vert\mbox{id}_{\mathbb{X}}-L\Vert
\]

If we take the infimum over all such $L$, we end up on \eqref{eq:approximation numer},
which is the definition of the $n$ - th approximation number. Hence
$a_{n}\left(\mbox{id}_{\mathbb{X}}\right)\geq1$ and with our first
step, the equality is proven.

The last property, which is compactness (C$_{a}$), is immediately
given, because $T\in\mathcal{L}\left(\mathbb{X},\mathbb{Y}\right)$
and $\lim_{n\rightarrow\infty}a_{n}\left(T\right)=0$. This means,
that there exists a sequence of finite rank operators, that converges
to $T$. Hence $T\in\mathcal{K}\left(\mathbb{X},\mathbb{Y}\right)$
(See Remark \ref{rem:Operator property})
\end{proof}
\begin{rem}
\label{Remark of Equivalnce}We have seen that property (C$_{e}$)
of Theorem \ref{thm:(Properties-of-e)} is an equivalence, whereas
(C$_{a}$) of Theorem \ref{thm:Properties a} is only an implication.
We should point out, that we have no loss of information when we switch
from Banach spaces to quasi-Banach spaces, which means that the opposite
implication is not even valid in the Banach space case.

We do however have a loss of information when considering the quasi-Banach
space case concerning (R$_{a}$), since we have an equivalence in
the Banach space case. (See \cite[Sect. 2.4., A4]{Carl=000026Stephani}.)
\end{rem}

\section{Kolmogorov Numbers on quasi-Banach Spaces}

At last we introduce Kolmogorov numbers. We will follow the notation
of \cite[Abschn. 3.3.]{HaroskeAT} in this part.
\begin{defn}
Let $\mathbb{X}$ and $\mathbb{Y}$ be quasi-Banach spaces and $T\in\mathcal{L}\left(\mathbb{X},\mathbb{Y}\right)$.
For $n\in\mathbb{N}$ we call

\[
d_{n}\left(T\right)=\inf_{U_{n}\subset\mathbb{Y},\,\dim U_{n}<n}\sup_{||x||_{\mathbb{X}}\leq1}\inf_{y\in U_{n}}||Tx-y||_{\mathbb{Y}}
\]

the $n$-th Kolmogorov number of the operator $T$.
\end{defn}

\begin{rem}
\label{rem:remark-proof}This is again a definition, which is based
on an operator $T$. If $\mathbb{X}$ is a quasi-Banach space and
$A\subset\mathbb{X}$, $n\in\mathbb{N}$ then Kolmogorov numbers can
also be introduced as 

\[
d_{n}(A,\mathbb{X}):=\inf_{U_{n}\subset\mathbb{X},\,\dim U_{n}<n}\sup_{x\in A}\inf_{y\in U_{n}}||x-y||_{\mathbb{X}}
\]

from which the above number is established through $d_{n}(T)=d_{n}(T(\overline{B}_{\mathbb{X}}),\mathbb{Y})$.
According to this definition of Kolmogorov numbers based on sets,
we can make the following statement:

Let $\mathbb{X}$ be a quasi-Banach space with $\dim\mathbb{X}\geq n$
for $n\in\mathbb{N}$. Then

\[
d_{k}\left(\overline{B}_{\mathbb{X}},\mathbb{X}\right)=1,\,\mbox{for }k=1,\ldots,n.
\]
\end{rem}

\begin{proof}
At first we notice $d_{1}\left(\overline{B}_{\mathbb{X}},\mathbb{X}\right)=\sup_{x\in\overline{B}_{\mathbb{X}}}\Vert x\Vert=1$.
Furthermore we can easily see, that $d_{k}\left(\overline{B}_{\mathbb{X}},\mathbb{X}\right)\leq d_{m}\left(\overline{B}_{\mathbb{X}},\mathbb{X}\right)$
if $k\geq m$. This is because 

\begin{eqnarray*}
d_{m}(\overline{B}_{\mathbb{X}},\mathbb{X}) & = & \inf_{U_{m}\subset\mathbb{X},\,\dim U_{m}<m}\sup_{||x||_{\mathbb{X}}\leq1}\inf_{y\in U_{m}}\Vert x-y\Vert_{\mathbb{X}}\\
 & \geq & \inf_{U_{m+1}\subset\mathbb{X},\,\dim U_{m+1}<m+1}\sup_{||x||_{\mathbb{X}}\leq1}\inf_{y\in U_{m+1}}\Vert x-y\Vert_{\mathbb{X}}=d_{m+1}(T)
\end{eqnarray*}

and of course $\inf_{x\in A}||x||\leq\inf_{x\in B}||x||$ if $B\subseteq A$.
Hence 

\[
d_{n}\left(\overline{B}_{\mathbb{X}},\mathbb{X}\right)\leq d_{1}\left(\overline{B}_{\mathbb{X}},\mathbb{X}\right)=1.
\]

Now we need to show that $d_{n}\left(\overline{B}_{\mathbb{X}},\mathbb{X}\right)\geq1$.
At first we will clarify that for all such subspaces $U_{n}\subset\mathbb{X},$
with $\dim U_{n}<n$ there exists $x_{n}\in\mathbb{X},\,x_{n}\neq0$
such that $\inf_{y\in U}\Vert y-x_{n}\Vert_{\mathbb{X}}=\Vert x_{n}\Vert_{\mathbb{X}}$.
The case $x_{n}\in U_{n}$ is obvious, so for a given subspace $U_{n}\subset\mathbb{X}$,
we choose an arbitrary $\xi\in\mathbb{X}\backslash U_{n}$. With Theorem
\ref{thm:bestapproximation} we know, that there exists a best approximation.
This means that there exists $y_{n}\in U_{n}$, such that $0<\Vert\xi-y_{n}\Vert_{\mathbb{X}}=\inf_{u\in U_{n}}\Vert\xi-u\Vert_{\mathbb{X}}$
. Hence by defining $x_{n}:=\xi-y_{n}\in\mathbb{X}$, we get $x_{n}\neq0$
and 

\begin{eqnarray}
\Vert x_{n}\Vert_{\mathbb{X}} & = & \Vert\xi-y_{n}\Vert_{\mathbb{X}}=\inf_{u\in U_{n}}\Vert\xi-y_{n}-\left(u-y_{n}\right)\Vert_{\mathbb{X}}=\inf_{u\in U_{n}}\Vert x_{n}-\underset{=:y\in U_{n}}{\underbrace{\left(u-y_{n}\right)}}\Vert_{\mathbb{X}}\nonumber \\
 & = & \inf_{y\in U_{n}}\Vert x_{n}-y\Vert_{\mathbb{X}}.\label{eq:chain}
\end{eqnarray}

Without loss of generality we can say $\Vert x_{n}\Vert=1$ (otherwise,
we could scale it). This yields 

\[
\sup_{x\in\overline{B}_{\mathbb{X}}}\inf_{y\in U_{n}}\Vert y-x\Vert\geq\inf_{y\in U_{n}}\Vert y-x_{n}\Vert\overset{\tiny\eqref{eq:chain}}{=}\Vert x_{n}\Vert=1.
\]

Thus taking the infimum over all such spaces $U_{n}$, we get $d_{n}\left(\overline{B}_{\mathbb{X}},\mathbb{X}\right)\geq1$
. Together with step 1, this yields the equality.
\end{proof}
As before, we will now show some basic properties of Kolmogorov numbers,
which can be found in \cite[Satz 3.28.]{HaroskeAT} for the case of
Banach spaces and which are slightly modified here.
\begin{thm}
\label{thm:Properties d}(Properties of $d_{n}(T)$)

Let $\mathbb{X}$,$\mathbb{Y}$ and $\mathbb{W}$ be quasi-Banach
spaces, with constants $C_{\mathbb{X}}$, $C_{\mathbb{Y}}$ and $C_{\mathbb{W}}$.
Further let $T\in\mathcal{L}\left(\mathbb{X},\mathbb{Y}\right)$. 
\end{thm}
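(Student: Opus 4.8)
The plan is to follow exactly the template set by Theorems \ref{thm:(Properties-of-e)} and \ref{thm:Properties a}, establishing for $d_n(T)$ a monotonicity statement (M$_d$), an additivity estimate (A$_d$), a multiplicativity estimate (P$_d$), a rank property (R$_d$), and presumably an identity normalization (N$_d$) and a compactness implication (C$_d$). For monotonicity I would first note that the constraint $\dim U_1 < 1$ forces $U_1 = \{0\}$, so that $d_1(T) = \sup_{\Vert x\Vert_{\mathbb{X}}\leq 1} \inf_{y\in\{0\}} \Vert Tx - y\Vert_{\mathbb{Y}} = \Vert T\Vert$; the chain $d_1(T)\geq d_2(T)\geq\ldots\geq 0$ then follows because enlarging the admissible dimension only shrinks the outer infimum, using once more that $\inf_{x\in A}\Vert x\Vert \leq \inf_{x\in B}\Vert x\Vert$ when $B\subseteq A$, exactly as in the Remark preceding this theorem.

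For the additivity (A$_d$) I would fix $\lambda > d_n(T)$ and $\mu > d_m(S)$ and pick near-optimal subspaces: $U\subset\mathbb{Y}$ with $\dim U < n$ realizing $\sup_{\Vert x\Vert\leq 1}\inf_{u\in U}\Vert Tx - u\Vert_{\mathbb{Y}} < \lambda$, and $V\subset\mathbb{Y}$ with $\dim V < m$ realizing the analogous bound $\mu$ for $S$. The candidate subspace is $U+V$, whose dimension is at most $(n-1)+(m-1) < n+m-1$. For each $x\in\overline{B}_{\mathbb{X}}$ I would choose $u\in U$ and $v\in V$ with $\Vert Tx - u\Vert < \lambda$ and $\Vert Sx - v\Vert < \mu$, and approximate $(S+T)x$ by $u+v\in U+V$; the quasi-triangle inequality gives $\Vert (S+T)x - (u+v)\Vert_{\mathbb{Y}} \leq C_{\mathbb{Y}}(\Vert Tx-u\Vert + \Vert Sx - v\Vert) < C_{\mathbb{Y}}(\lambda+\mu)$. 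Taking the supremum over $x$, the infimum over admissible subspaces, and finally the infimum over $\lambda,\mu$ yields $d_{m+n-1}(S+T)\leq C_{\mathbb{Y}}(d_m(S)+d_n(T))$. The equivalent $\varrho$-norm formulation is obtained verbatim as in (A$_e$), replacing $C_{\mathbb{Y}}(\lambda+\mu)$ by $(\lambda^\varrho+\mu^\varrho)^{1/\varrho}$.

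The multiplicativity (P$_d$) is where the argument differs genuinely and is the step I expect to be the main obstacle, because the two operators live on different spaces and the admissible subspace for the product must be built inside $\mathbb{W}$. With $T\in\mathcal{L}(\mathbb{X},\mathbb{Y})$ and $S\in\mathcal{L}(\mathbb{Y},\mathbb{W})$, I would fix $\lambda > d_n(T)$ and $\mu > d_m(S)$ and take $U\subset\mathbb{Y}$ with $\dim U < n$ controlling $T$ and $V\subset\mathbb{W}$ with $\dim V < m$ controlling $S$. The natural candidate is $S(U)+V\subset\mathbb{W}$, of dimension at most $(n-1)+(m-1) < n+m-1$. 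For $\Vert x\Vert_{\mathbb{X}}\leq 1$ pick $u\in U$ with $\Vert Tx - u\Vert_{\mathbb{Y}} < \lambda$; the key trick is to rescale, noting $\Vert (Tx-u)/\lambda\Vert_{\mathbb{Y}} < 1$ so the defining property of $V$ supplies $v\in V$ with $\Vert S((Tx-u)/\lambda) - v\Vert_{\mathbb{W}} < \mu$, i.e. $\Vert S(Tx-u) - \lambda v\Vert_{\mathbb{W}} < \lambda\mu$. Since $STx = Su + S(Tx-u)$ and $Su + \lambda v \in S(U)+V$, the approximation error is exactly $\Vert STx - (Su+\lambda v)\Vert_{\mathbb{W}} = \Vert S(Tx-u) - \lambda v\Vert_{\mathbb{W}} < \lambda\mu$. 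Crucially no quasi-triangle constant enters, since this is a single term; taking suprema and infima gives $d_{m+n-1}(ST)\leq d_n(T) d_m(S)$.

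Finally, the rank property (R$_d$) is immediate: if $\mbox{rank }T < n$ then $\mathcal{R}(T)$ is itself an admissible subspace of $\mathbb{Y}$ with $\dim\mathcal{R}(T)<n$, and since $Tx\in\mathcal{R}(T)$ for every $x$ the inner infimum vanishes, forcing $d_n(T)=0$. An identity normalization (N$_d$), if stated, would follow immediately from the preceding Remark \ref{rem:remark-proof}, which already computes $d_n(\overline{B}_{\mathbb{X}},\mathbb{X})=1$ when $\dim\mathbb{X}\geq n$, together with $d_n(\mbox{id}_{\mathbb{X}})=d_n(\mbox{id}(\overline{B}_{\mathbb{X}}),\mathbb{X})$. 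Should the statement include a compactness implication (C$_d$), I would argue as in (C$_a$): a near-optimal subspace induces a finite rank approximation of $T$ whose error is controlled by $d_n(T)$, so $d_n(T)\to 0$ exhibits $T$ as a limit of finite rank operators and hence compact by Remark \ref{rem:Operator property}. The only place demanding real care remains (P$_d$), specifically the rescaling step and the verification that $S(U)+V$ has the correct dimension.
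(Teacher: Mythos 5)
Your treatments of (M$_d$), (A$_d$), (P$_d$), (R$_d$) and (N$_d$) match the paper's proof essentially step for step: in particular your rescaling trick in (P$_d$) --- approximating $S\bigl((Tx-u)/\lambda\bigr)$ by an element of $V$ and using the subspace $S(U)+V$ --- is exactly the paper's argument (the paper scales by $d_m(T)+\varepsilon$ where you scale by $\lambda$, an immaterial difference), and your dimension count and the observation that no quasi-triangle constant enters are both correct.

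The genuine gap is in (C$_d$). First, the paper's property is an equivalence, $T\in\mathcal{K}(\mathbb{X},\mathbb{Y})\Longleftrightarrow\lim_{n\rightarrow\infty}d_{n}(T)=0$, and you only address the implication from $d_n(T)\to 0$ to compactness; the converse (compactness of $T(\overline{B}_{\mathbb{X}})$ gives a finite $\varepsilon$-net, whose span is an admissible subspace, whence $d_{n_0+1}(T)\leq\varepsilon$) is easy but must be said. Second, and more seriously, the direction you do sketch is argued by a route that fails for Kolmogorov numbers: unlike (C$_a$), where $a_n(T)\to 0$ hands you finite-rank \emph{operators} converging to $T$ by definition, here $d_n(T)\to 0$ only hands you finite-dimensional \emph{subspaces} $U_n$ close to $T(\overline{B}_{\mathbb{X}})$. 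Converting a near-optimal subspace into a bounded finite-rank operator requires a bounded linear projection onto $U_n$; in a general quasi-Banach space such projections need not exist at all (in $L_p$ with $0<p<1$ the dual is trivial, so not even one-dimensional subspaces are complemented), and even in Banach spaces the projection constants grow with $\dim U_n$ (of order $\sqrt{n}$), so the resulting operator error of order $\sqrt{n}\,d_n(T)$ need not tend to zero. The paper avoids this entirely: it fixes $\varepsilon>0$, takes the points $u_n^x\in U_n$ with $\Vert x-u_n^x\Vert_{\mathbb{Y}}<\varepsilon$ for $x\in T(\overline{B}_{\mathbb{X}})$, notes these lie in a bounded subset $M_0$ of the finite-dimensional space $U_n$, hence $M_0$ is relatively compact and admits a finite $\varepsilon$-net $M_1$, and then checks that $M_1$ is a $2C_{\mathbb{Y}}\varepsilon$-net for $T(\overline{B}_{\mathbb{X}})$, so $T$ is compact by Remark \ref{rem:Operator property}. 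You should replace your finite-rank argument by this net argument.
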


\begin{lyxlist}{00.00.0000}
\item [{($\mbox{M}_{d}$)}] $\Vert T\Vert=d_{1}(T)\geq d_{2}(T)\geq\ldots\geq0$
\item [{($\mbox{A}_{d}$)}] Let $S\in\mathcal{L}\left(\mathbb{X},\mathbb{Y}\right)$,
$n,\,m\in\mathbb{N}$, then $d_{m+n-1}\left(S+T\right)\leq C_{\mathbb{Y}}\left(d_{m}\left(S\right)+d_{n}\left(T\right)\right)$
which is equivalent to $d_{m+n-1}\left(S+T\right)^{\varrho}\leq d_{m}\left(S\right)^{\varrho}+d_{n}\left(T\right)^{\varrho}$
for an equivalent $\varrho$ - norm with $\varrho\in\left(0,1\right]$.
\item [{($\mbox{P}_{d}$)}] Let $S\in\mathcal{L}\left(\mathbb{Y},\mathbb{W}\right)$,
$n,\,m\in\mathbb{N}$. Then $d_{m+n-1}\left(ST\right)\leq d_{m}\left(S\right)d_{n}(T)$.
\item [{($\mbox{R}_{d}$)}] $\mbox{rank\,}T<n\,\Longrightarrow\,d_{n}\left(T\right)=0$
\item [{($\mbox{N}_{d}$)}] $\dim\mathbb{X}\geq n\,\Longrightarrow\,d_{n}\left(\mbox{id}_{\mathbb{X}\rightarrow\mathbb{X}}\right)=d_{n}(\mbox{id}_{\mathbb{X}})=1$
\item [{($\mbox{C}_{d}$)}] $T\in\mathcal{K}(\mathbb{X},\mathbb{Y})\Longleftrightarrow\lim_{n\rightarrow\infty}d_{n}(T)=0$
\end{lyxlist}
\begin{proof}
The proof of monotonicity (M$_{d}$) is similar to Remark \ref{rem:remark-proof}
. To prove the second fact, we take a look at an arbitrary operator
$T\in\mathcal{L}(\mathbb{X},\mathbb{Y})$. This yields that $T\left(\overline{B}_{\mathbb{X}}\right)$
is bounded, and further

\[
d_{1}\left(T\right)=\inf_{U_{1}\subset\mathbb{Y},\,\dim U_{1}<1}\sup_{||x||_{\mathbb{X}}\leq1}\inf_{y\in U_{1}}\Vert Tx-y\Vert_{\mathbb{Y}}=\sup_{||x||_{\mathbb{X}}\leq1}\Vert Tx\Vert_{\mathbb{Y}}=\Vert T\Vert_{\mathbb{Y}}.
\]

To prove the additivity (A$_{d}$), let $\varepsilon>0$, $n,\,m\in\mathbb{N}$.
By definition of $d_{n}$ as infimum over all subspaces $U_{n}\subset\mathbb{Y}$
with $\dim U_{n}<n$, we gain the following:

\[
\exists U_{m}\subset\mathbb{Y},\,\dim U_{m}<m\mbox{ and }V_{n}\subset\mathbb{Y},\,\dim V_{n}<n\,\forall x\in\overline{B}_{\mathbb{X}}\,\exists u_{m}^{x}\in U_{m},\,v_{n}^{x}\in V_{n}:
\]

\[
\Vert Sx-u_{m}^{x}\Vert_{\mathbb{Y}}<d_{m}(S)+\varepsilon\mbox{\,\,\,\,\,\ and\,\,\,\,\,}\Vert Tx-v_{n}^{x}\Vert_{\mathbb{Y}}<d_{n}(T)+\varepsilon
\]

Now we denote $W_{m,n}=U_{m}+V_{n}\subset\mathbb{Y}$. We notice that
$\dim W_{m,n}<n+m-1$. As above, we can see that for all $x\in\overline{B}_{\mathbb{X}}$
there exists $w_{m,n}^{x}=u_{m}^{x}+v_{n}^{x}\in W_{m,n}$ , such
that

\begin{eqnarray}
\Vert(S+T)x-w_{m,n}^{x}\Vert_{\mathbb{Y}} & = & \Vert Sx-u_{m}^{x}+Tx-v_{n}^{x}\Vert_{\mathbb{Y}}\nonumber \\
 & \leq & C_{\mathbb{Y}}\left(\Vert Sx-u_{m}^{x}\Vert_{\mathbb{Y}}+\Vert Tx-v_{n}^{x}\Vert_{\mathbb{Y}}\right)\label{eq:AdditivityD_n}\\
 & < & C_{\mathbb{Y}}\left(d_{m}(S)+d_{n}(T)+2\varepsilon\right).\nonumber 
\end{eqnarray}

The inequality \eqref{eq:AdditivityD_n} is maintained if we take
the supremum of all $x\in\overline{B}_{\mathbb{X}}$ over the infimum
of all such $w_{m,n}\in W_{m,n}$. Because there exist such $W_{m,n}$
we can also take the infimum over all such subspaces $W_{m,n}\subset\mathbb{Y}$
with $\dim W_{m,n}<n+m-1$. Since $\varepsilon$ was arbitrary, we
let $\varepsilon\rightarrow0$ and gain the additivity (A$_{d}$)

\[
d_{n+m-1}(S+T)\leq C_{\mathbb{Y}}\left(d_{m}(S)+d_{n}(T)\right).
\]

As we already have established two times before, this is equivalent
to 

\[
d_{n+m-1}\left(S+T\right)^{\varrho}\leq d_{m}\left(S\right)^{\varrho}+d_{n}\left(T\right)^{\varrho}
\]

and is left here unproven, since the idea is the same as in (A$_{e}$)
of Theorem \ref{thm:(Properties-of-e)}.

We advance with the multiplicativity (M$_{d}$) and start the same
way as above by $\varepsilon>0$ and $n,m\in\mathbb{N}$. Also with
the same arguments as above, through definition of the infimum, we
have

\begin{eqnarray}
 &  & \exists U_{m}\subset\mathbb{Y},\,\dim U_{m}<m\,\,\forall x\in\overline{B}_{\mathbb{X}}\,\,\exists u_{m}^{x}\in U_{m}:\,||Tx-u_{m}^{x}||_{\mathbb{Y}}\leq d_{m}\left(T\right)+\varepsilon,\label{eq:Theorem2a}\\
\mbox{} & \mbox{and} & \exists V_{n}\subset\mathbb{W},\,\dim V_{n}<n\,\,\forall y\in\overline{B}_{\mathbb{Y}}\,\,\exists v_{n}^{y}\in V_{n}:\,||Sy-v_{n}^{y}||_{\mathbb{W}}\leq d_{n}\left(S\right)+\varepsilon.\label{eq:Theorem2b}
\end{eqnarray}

Now let $x\in\overline{B}_{\mathbb{X}}.$ With our first premise \eqref{eq:Theorem2a},
we gain 

\[
\left\Vert \frac{Tx-u_{m}^{x}}{d_{m}(T)+\varepsilon}\right\Vert <1\mbox{ \,\,\ and define\,\,}y=y(x):=\frac{Tx-u_{m}^{x}}{d_{m}(T)+\varepsilon}.
\]

$\Longrightarrow\exists w_{m,n}^{x}=Su_{m}^{x}+\left(d_{m}\left(T\right)+\varepsilon\right)v_{n}^{y(x)}\in S\left(U_{m}\right)+V_{n}=W_{m,n}\subset\mathbb{W}$

\begin{eqnarray*}
 & \left\Vert S\underset{y(x)}{\underbrace{\left(\frac{Tx-u_{m}^{x}}{d_{m}(T)+\varepsilon}\right)}}-\underset{v_{n}^{y(x)}}{\underbrace{\frac{w_{m,n}^{x}-Su_{m}^{x}}{d_{m}\left(T\right)+\varepsilon}}}\right\Vert _{\mathbb{W}}\overset{\tiny\eqref{eq:Theorem2b}}{\text{<}}d_{n}\left(S\right)+\varepsilon\\
\Longleftrightarrow & \left\Vert STx-w_{m,n}^{x}\right\Vert _{\mathbb{W}}\leq\left(d_{n}\left(S\right)+\varepsilon\right)\left(d_{m}\left(T\right)+\varepsilon\right)
\end{eqnarray*}

As above, this inequality is of course maintained, if we take the
supremum over all $x\in\overline{B}_{\mathbb{X}}$ over the infimum
of all such $w_{m,n}$ in this specific $W_{m,n}$. Since such $W_{m,n}\subset\mathbb{W}$
exist and have $\dim W_{m,n}<m+n-1$, we can take the infimum over
them. With $\varepsilon$ arbitrary, we let $\varepsilon\downarrow0$
and the desired statement follows as

\[
d_{m+n-1}\left(ST\right)\leq d_{n}\left(S\right)d_{m}\left(T\right).
\]

The statement, which denotes rank-properties (R$_{d}$) with respect
to Kolmogorov numbers is easily obtained, since $\mbox{rank }T=\dim\mathcal{R}\left(T\right)<n$,
we simply put $U_{n}:=\mathcal{R}\left(T\right)$ and get $d_{n}\left(T\right)=0$. 

For proving property (N$_{d}$), we simply use Remark \ref{rem:remark-proof}
and the fact, that $T:=\mbox{id : }\mathbb{X}\longrightarrow\mathbb{X}\in\mathcal{L}\left(\mathbb{X},\mathbb{X}\right)$.
Hence

\[
d_{k}\left(T\right)=d_{k}\left(T\left(\overline{B}_{\mathbb{X}}\right),\mathbb{X}\right)=d_{k}\left(\overline{B}_{\mathbb{X}},\mathbb{X}\right)=1\,\mbox{\,\,\,\ for\,\,\,\ }k=1,\ldots,n.
\]

At last we will have a look at compactness properties (C$_{d}$).
For the first direction, let $T\in\mathcal{K}\left(\mathbb{X},\mathbb{Y}\right)$.
Then with Remark \ref{rem:Operator property} we know that $T\left(\overline{B}_{\mathbb{X}}\right)$
is relatively compact, hence we can find a finite $\varepsilon$ -
net. That means 

\[
\exists n_{0}\in\mathbb{N},\,x_{0},\ldots,x_{n_{0}}\in\mathbb{X}\,\forall x\in T\left(\overline{B}_{\mathbb{X}}\right):\,\min_{i=1,\ldots,n_{0}}\Vert x-x_{i}\Vert_{\mathbb{X}}\leq\varepsilon.
\]

We define $U_{n_{0}}:=\mbox{span }\left\{ x_{1},\ldots,x_{n_{0}}\right\} $
and since $\dim U_{n_{0}}\leq n_{0}$ we can derive that $d_{n_{0}+1}\left(T\right)\leq\varepsilon$.
Using the monotonicity (M$_{d}$) we get

\[
\exists n_{0}\in\mathbb{N}\,\forall n>n_{0}:\,d_{n}\left(T\right)\leq\varepsilon\,\Longleftrightarrow\,\lim_{n\rightarrow\infty}d_{n}\left(T\right)=0.
\]

To prove the opposite direction, we suppose that $\lim_{n\rightarrow\infty}d_{n}\left(T\right)=0$.
$T\in\mathcal{L}\left(\mathbb{X},\mathbb{Y}\right)$ yields, that
$T\left(\overline{B}_{\mathbb{X}}\right)$ is bounded in $\mathbb{Y}.$

\[
\Longrightarrow d_{1}\left(T\right)=\sup_{x\in T\left(\overline{B}_{\mathbb{X}}\right)}\Vert x\Vert_{\mathbb{X}}<\infty
\]

If we have a look at our premise $\lim_{n\rightarrow\infty}d_{n}\left(T\right)=0$,
we see that this means

\[
\forall\varepsilon>0\,\exists n_{0}\in\mathbb{N\,}\forall n>n_{0}\,\exists U_{n}\subset\mathbb{Y},\dim U_{n}<n\,\forall x\in T\left(\overline{B}_{\mathbb{X}}\right)\,\exists u_{n}^{x}\in U_{n}:\,\Vert x-u_{n}^{x}\Vert_{\mathbb{Y}}<\varepsilon.
\]

For these $u_{n}^{x}\in U_{n}$, we have

\[
\Vert u_{n}^{x}\Vert_{\mathbb{Y}}\leq C_{\mathbb{Y}}\left(\Vert x-u_{n}^{x}\Vert_{\mathbb{Y}}+\Vert x\Vert_{\mathbb{Y}}\right)\leq C_{\mathbb{Y}}\left(\varepsilon+\sup_{x\in T\left(\overline{B}_{\mathbb{X}}\right)}\Vert x\Vert_{\mathbb{Y}}\right)=C_{\mathbb{Y}}\left(\varepsilon+d_{1}\left(T\right)\right).
\]

If we define $M_{0}:=\left\{ u\in U_{n}:\,\Vert u\Vert_{\mathbb{Y}}\leq C_{\mathbb{Y}}\left(\varepsilon+d_{1}\left(T\right)\right)\right\} $,
we see that $M_{0}$ is bounded and $u_{n}^{x}\in M_{0}\subset U_{n}$.
$\dim U_{n}<n$ yields that $M_{0}$ is relatively compact and therefore
we can find a finite $\varepsilon$ - net $M_{1}:=\left\{ \eta_{0},\ldots,\eta_{m}\right\} $
for $M_{0}$. This specific $M_{1}$ is a $2C_{\mathbb{Y}}\varepsilon$
- net for $T\left(\overline{B}_{\mathbb{X}}\right)$, hence $T\left(\overline{B}_{\mathbb{X}}\right)$
is relatively compact and that means, that $T\in\mathcal{K}\left(\mathbb{X},\mathbb{Y}\right)$. 
\end{proof}
\begin{rem}
As we have already done in Remark \ref{Remark of Equivalnce} for
approximation numbers, we should point out, that we have a loss of
information, when considering quasi-Banach spaces in property (R$_{d}$),
since we have an equivalence for Banach spaces. (See \cite[Satz 3.28.]{HaroskeAT}.)
\end{rem}

We have established various properties of Kolmogorov numbers and have
already seen, that they can be introduced in different ways (on arbitrary
sets or operators), and we will now show the equivalence to other
definitions taken from \cite[Sect. 2.2.]{Carl=000026Stephani} and
\cite[Ch. 2., 2.5.2.]{Albrecht Pietsch}.
\begin{prop}
\label{pro:alt kolmogorov}Let $\mathbb{X}$ and $\mathbb{Y}$ be
arbitrary quasi-Banach spaces and $T\in\mathcal{L}\left(\mathbb{X},\mathbb{Y}\right)$,
$n\in\mathbb{N}$. Then the $n$ - th Kolmogorov number $d_{n}\left(T\right)$
can be expressed as 

\begin{eqnarray*}
\mbox{(i) } & d_{n}\left(T\right) & =\inf\left\{ \varepsilon>0\,:\,T\left(\overline{B}_{\mathbb{X}}\right)\subset N_{\varepsilon}+\varepsilon\overline{B}_{\mathbb{Y}},\,N_{\varepsilon}\subset\mathbb{Y},\,\dim N_{\varepsilon}<n\right\} .\\
\mbox{(ii)} & d_{n}\left(T\right) & =\inf\left\{ \Vert Q_{V}^{\mathbb{Y}}T\Vert\,:\,V\subset\mathbb{Y},\,\dim V<n\right\} .
\end{eqnarray*}
\end{prop}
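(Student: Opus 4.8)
The plan is to derive both identities by reducing their right-hand sides, term by term, to the defining expression
\[
d_{n}(T)=\inf_{U\subset\mathbb{Y},\,\dim U<n}\;\sup_{\|x\|_{\mathbb{X}}\leq1}\;\inf_{y\in U}\|Tx-y\|_{\mathbb{Y}},
\]
so that neither part requires anything beyond rewriting the inner quantity $\inf_{y\in U}\|Tx-y\|_{\mathbb{Y}}$, i.e. the distance of $Tx$ to the subspace $U$. The whole proof is a matter of recognizing that each of the three formulations packages this distance in a different but equivalent way.

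For (i) I would fix a single subspace $N\subset\mathbb{Y}$ with $\dim N<n$ and compare $s(N):=\sup_{\|x\|_{\mathbb{X}}\leq1}\inf_{y\in N}\|Tx-y\|_{\mathbb{Y}}$ with $\varepsilon(N):=\inf\{\varepsilon>0:T(\overline{B}_{\mathbb{X}})\subset N+\varepsilon\overline{B}_{\mathbb{Y}}\}$. The key equivalence is
\[
T(\overline{B}_{\mathbb{X}})\subset N+\varepsilon\overline{B}_{\mathbb{Y}}\iff\sup_{\|x\|_{\mathbb{X}}\leq1}\inf_{y\in N}\|Tx-y\|_{\mathbb{Y}}\leq\varepsilon,
\]
which holds because $Tx\in N+\varepsilon\overline{B}_{\mathbb{Y}}$ means exactly that some $y\in N$ satisfies $\|Tx-y\|_{\mathbb{Y}}\leq\varepsilon$; here the closedness of the ball is what matches the non-strict inequality, and since $N$ is finite dimensional Theorem \ref{thm:bestapproximation} guarantees that the infimum $\inf_{y\in N}\|Tx-y\|_{\mathbb{Y}}$ is actually attained, so the boundary case $\varepsilon=s(N)$ causes no trouble. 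This gives $s(N)=\varepsilon(N)$. Taking the infimum over all admissible $N$ then turns the left side into $d_{n}(T)$ and the right side into the quantity in (i), the double infimum over the pairs $(N,\varepsilon)$ collapsing to $\inf_{N}\varepsilon(N)$.

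For (ii) the single observation needed is that, for a subspace $V\subset\mathbb{Y}$, Definition \ref{def:Quotientendefinition}(iv) of the quotient quasi-norm yields, for every $x$,
\[
\|Q_{V}^{\mathbb{Y}}Tx\|_{\mathbb{Y}/V}=\inf_{v\in V}\|Tx+v\|_{\mathbb{Y}}=\inf_{y\in V}\|Tx-y\|_{\mathbb{Y}},
\]
where the last step uses that $V$ is a linear subspace, so $v\mapsto-v$ runs through $V$. Passing to the supremum over $x\in\overline{B}_{\mathbb{X}}$ gives $\|Q_{V}^{\mathbb{Y}}T\|=\sup_{\|x\|_{\mathbb{X}}\leq1}\inf_{y\in V}\|Tx-y\|_{\mathbb{Y}}$, which is precisely the inner $\sup$-$\inf$ in the definition of $d_{n}(T)$ with $U=V$. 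Taking the infimum over all $V$ with $\dim V<n$ reproduces (ii) verbatim.

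I expect the only genuinely delicate point to be the bookkeeping with the nested infima and with closed versus open balls in (i): one must check that passing from $\inf_{y\in N}\|Tx-y\|_{\mathbb{Y}}\leq\varepsilon$ to the existence of an \emph{actual} $y\in N$ with $\|Tx-y\|_{\mathbb{Y}}\leq\varepsilon$ is legitimate, which is exactly where the closed ball together with the best-approximation result of Theorem \ref{thm:bestapproximation} is used. A secondary, purely formal point worth noting is that Definition \ref{def:Quotientendefinition} is phrased for normed spaces while here $\mathbb{Y}$ is only quasi-normed; however, the computation of $\|Q_{V}^{\mathbb{Y}}Tx\|_{\mathbb{Y}/V}$ above invokes nothing beyond the defining formula for the quotient functional, so it carries over unchanged and no triangle-inequality or completeness property of the quotient is actually needed.
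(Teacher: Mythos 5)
Your proof is correct, but it is organized quite differently from the paper's. For (i), the paper runs a two-sided $\delta$-slack argument: it extracts, for each $\delta>0$, a near-optimal subspace and absorbs the slack into the radius via the equivalent $\varrho$-norm (writing the radius as $\left(d_{n}(T)^{\varrho}+\delta^{\varrho}\right)^{1/\varrho}$), then lets $\delta\downarrow0$ in both directions. You instead fix a single admissible subspace $N$ and prove the exact identity $s(N)=\varepsilon(N)$, invoking Theorem \ref{thm:bestapproximation} so that the infimum $\inf_{y\in N}\Vert Tx-y\Vert_{\mathbb{Y}}$ is attained and the boundary case is clean; this eliminates both the $\delta$'s and the $\varrho$-norm machinery, at the (harmless) cost of using finite-dimensionality of $N$ through the best-approximation theorem, which the paper's slack argument does not need. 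The divergence is even more pronounced in (ii): the paper proves (ii) by chaining it to (i), pushing the inclusion $T\left(\overline{B}_{\mathbb{X}}\right)\subseteq V+\varepsilon\overline{B}_{\mathbb{Y}}$ through the quotient map, citing Lemma \ref{lem:quotient ball}, and then passing back to preimages; you bypass (i) and the quotient-ball lemma entirely by the pointwise identity $\Vert Q_{V}^{\mathbb{Y}}Tx\Vert_{\mathbb{Y}/V}=\inf_{y\in V}\Vert Tx-y\Vert_{\mathbb{Y}}$, which exhibits (ii) as a literal rewriting of the defining $\sup$-$\inf$. Your route is shorter and also sidesteps a small imprecision in the paper, namely that Lemma \ref{lem:quotient ball} is stated for open unit balls while the paper applies it to closed ones; what the paper's route buys in exchange is that it stays entirely at the level of set inclusions and coverings, reusing the quotient-ball lemma it set up earlier, and never needs attainment of the distance to a subspace.
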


\begin{proof}
In the \emph{first step}, we show the equivalence to a definition
taken by \cite{Carl=000026Stephani}. Again we will use Theorem \ref{thm:Normequivalence}
to consider $\varrho$ - norms instead of quasi-norms. Let

\[
\hat{d_{n}}\left(T\right):=\inf\left\{ \varepsilon>0\,:\,T\left(\overline{B}_{\mathbb{X}}\right)\subset N_{\varepsilon}+\varepsilon\overline{B}_{\mathbb{Y}},\,N_{\varepsilon}\subset\mathbb{Y},\,\dim N_{\varepsilon}<n\right\} .
\]

We now want to show that $\hat{d_{n}}\left(T\right)\leq d_{n}\left(T\right)$.
By definition, we can find a subspace $N\subset\mathbb{Y}$, $\dim N<n$
such that

\[
\sup_{\Vert x\Vert\leq1}\inf_{y\in N}\Vert Tx-y\Vert\leq d_{n}\left(T\right)+\delta
\]

for some $\delta>0$. That is to say, that for every element $x\in\overline{B}_{\mathbb{X}}$
exists a $y\in N$, such that

\[
Tx\in\left\{ y+\left(d_{n}\left(T\right)^{\varrho}+\delta^{\varrho}\right)^{1/\varrho}\overline{B}_{\mathbb{Y}}\right\} ,
\]

which means nothing more than

\[
T\left(\overline{B}_{\mathbb{X}}\right)\subset N+\left(d_{n}\left(T\right)^{\varrho}+\delta^{\varrho}\right)^{1/\varrho}\overline{B}_{\mathbb{Y}}.
\]

Hence be letting $\delta\downarrow0$ we get $\hat{d_{n}}\left(T\right)\leq d_{n}\left(T\right).$
Now we establish the opposite inequality. For $\delta>0$ we choose
$N\subset\mathbb{Y}$, $\dim N<n$ such that

\[
T\left(\overline{B}_{\mathbb{X}}\right)\subset N+\left(\hat{d_{n}}\left(T\right)+\delta\right)\overline{B}_{\mathbb{Y}},
\]

which means that for every $x\in\overline{B}_{\mathbb{X}}$ exists
$y\in N$, such that

\[
Tx\in\left\{ y+\left(\hat{d_{n}}\left(T\right)+\delta\right)\overline{B}_{\mathbb{Y}}\right\} \Longleftrightarrow\Vert Tx-y\Vert\leq\left(\hat{d_{n}}\left(T\right)+\delta\right).
\]

Of course this stays correct if we take the supremum over all $x\in\overline{B}_{\mathbb{X}}$
over the infimum of all $y\in N$. Hence by taking the infimum over
all such subspaces and letting $\delta\downarrow0$, we get $d_{n}\left(T\right)\leq\hat{d_{n}}\left(T\right)$. 

In our \emph{second step} we show the equivalence of (i) to (ii),
where the definition in (ii) is taken from \cite[Ch. 2., 2.5.2.]{Albrecht Pietsch}.
So let $d_{n}\left(T\right)=\varepsilon$. According to the equivalent
definition of the $n$ - th Kolmogorov number established in (i),
we can take a subspace $V\subset\mathbb{Y}$, $V=V\left(\varepsilon\right)$
with $\dim V<n$ and get

\[
T\left(\overline{B}_{\mathbb{X}}\right)\subseteq V+\varepsilon\overline{B}_{\mathbb{X}}.
\]

By applying the quotient map of Definition \ref{def:Quotientendefinition}
we get

\[
Q_{V}^{\mathbb{Y}}T\left(\overline{B}_{\mathbb{X}}\right)\subseteq\varepsilon Q_{V}^{\mathbb{Y}}\left(\overline{B}_{\mathbb{X}}\right)=\varepsilon\overline{B}_{\mathbb{Y}/V}.
\]

Hence $\Vert Q_{V}^{\mathbb{Y}}T\Vert\leq\varepsilon$ and taking
the infimum over all such subspaces $V$, and the notation $\tilde{d_{n}}\left(T\right)=\inf\left\{ \Vert Q_{V}^{\mathbb{Y}}T\Vert:\,V\subset\mathbb{Y},\,\dim V<n\right\} $,
we get $\tilde{d_{n}}\left(T\right)\leq d_{n}\left(T\right)$.

We now wish to show that $\tilde{d_{n}}\left(T\right)\geq d_{n}\left(T\right)$.
We choose a $\delta>0$ and an arbitrary subspace $V\subset\mathbb{Y}$
with $\dim V<n$, such that $\Vert Q_{V}^{\mathbb{Y}}T\Vert<\tilde{d_{n}}\left(T\right)+\delta$.
This inequality implies

\[
Q_{V}^{\mathbb{Y}}T\left(\overline{B}_{\mathbb{X}}\right)\subseteq\left(\tilde{d_{n}}\left(T\right)+\delta\right)\overline{B}_{\mathbb{Y}/V}\overset{\tiny\mbox{Lemma}\ref{lem:quotient ball}}{=}Q_{V}^{\mathbb{Y}}\left(\left(\tilde{d_{n}}\left(T\right)+\delta\right)\overline{B}_{\mathbb{Y}}\right).
\]

If we pass on to the inverse (as far as sets are concerned), we have 

\[
V+T\left(\overline{B}_{\mathbb{X}}\right)\subseteq V+\left(\tilde{d_{n}}\left(T\right)+\delta\right)\overline{B}_{\mathbb{Y}}.
\]

\[
\Longrightarrow T\left(\overline{B}_{\mathbb{X}}\right)\subseteq V+\left(\tilde{d_{n}}\left(T\right)+\delta\right)\overline{B}_{\mathbb{Y}}
\]

Hence, with (i) we find that $d_{n}\left(T\right)\leq\tilde{d_{n}}\left(T\right)+\delta$.
If we let $\delta\downarrow0$, we finally get $d_{n}\left(T\right)\leq\tilde{d_{n}}\left(T\right)$,
hence the equality is established.
\end{proof}
Now that we have established the three numbers with respect to operators,
there arises the question of a connection between them, on which we
will have a look at in the following part.

\part{Relationship between the Numbers}

\section{Relationship between Entropy and Approximation Numbers}

To begin with the investigation of connections between the numbers,
we shall start with approximation- and entropy numbers. We will compare
both concerning mean values and their limits. Since entropy numbers
hold more information about the structure of the operator, one could
identify them with the modulus of continuity, whereas approximation
numbers hold more information about how good an approximation can
be carried out and thus could be identified with the best approximation
of continuous functions by polynomials. Hence the idea of this part
is to give inequalities of Bernstein-Jackson type for operators and
to introduce the theory of $s$-scales. 
\begin{lem}
\label{lem:Estimation unproved}An operator $T$ acting between real
quasi-Banach spaces $\mathbb{X}$ and $\mathbb{Y}$ is of $\mbox{rank }m$
if and only if there exist constants $C,c>0$ such that

\[
c\cdot2^{-\left(n-1\right)/m}\leq e_{n}\left(T\right)\leq C\Vert T\Vert\cdot2^{-\left(n-1\right)/m},\,\,\mbox{for }n=1,2,3,\ldots.
\]

If $T$ acts between complex quasi-Banach spaces $\mathbb{X}$ and
$\mathbb{Y}$, it is of $\mbox{rank }m$ if and only if there exists
a constant $C,c>0$ such that

\[
c\mbox{\ensuremath{\cdot}}2^{-\left(n-1\right)/2m}\leq e_{n}\left(T\right)\leq C\Vert T\Vert\cdot2^{-\left(n-1\right)/2m},\,\,\mbox{for }n=1,2,3,\ldots.
\]
\end{lem}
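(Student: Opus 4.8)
The plan is to reduce the whole statement to a single volumetric estimate for the entropy numbers of the unit ball of a \emph{finite-dimensional} quasi-Banach space, and then to transport that estimate to $T$ through its range $\mathcal{R}(T)$. The real/complex dichotomy will enter only through the real dimension of the range: a $\mathbb{C}$-linear space of complex dimension $m$ is a real space of dimension $2m$, and since all covering and packing counts come from the scaling behaviour of Lebesgue measure (which sees the real dimension), the exponent $1/m$ of the real case is simply replaced by $1/(2m)$ in the complex case. The key auxiliary fact I would isolate is: if $E$ is a quasi-normed space of real dimension $d$ with constant $C_E$, then there are $0<c_d\le C_d$ (depending only on $d$ and $C_E$) with $c_d\,2^{-(n-1)/d}\le e_n(\overline{B}_E,E)\le C_d\,2^{-(n-1)/d}$ for all $n$. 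Both inequalities follow from $\mathrm{vol}(\delta\,\overline{B}_E)=\delta^{d}\,\mathrm{vol}(\overline{B}_E)$: one covers $\overline{B}_E$ by at most $(\mathrm{const}/\varepsilon)^{d}$ balls of radius $\varepsilon$ (pack disjoint $\tfrac{\varepsilon}{2C_E}$-balls into a slightly enlarged ball and compare volumes) and needs at least $(\mathrm{const}/\varepsilon)^{d}$ of them (a maximal $\varepsilon$-separated set is an $\varepsilon$-net, compare volumes again); separated points then become a lower bound for $e_n$ because, by the quasi-triangle inequality of the target, two points more than $2C_E\varepsilon$ apart cannot lie in one $\varepsilon$-ball.

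For the forward implication, assume $\mathrm{rank}\,T=m$ and set $R:=\mathcal{R}(T)$, so $\dim_{\mathbb{R}}R=m$ (real) resp. $2m$ (complex). The upper bound is immediate from $T(\overline{B}_{\mathbb{X}})\subseteq\|T\|\,\overline{B}_{R}$: the finite-dimensional covering bound in $R$ lets one cover $T(\overline{B}_{\mathbb{X}})$ by $2^{n-1}$ balls of radius $C\|T\|\,2^{-(n-1)/m}$, giving $e_n(T)\le C\|T\|\,2^{-(n-1)/m}$. For the lower bound I would first prove that $T(\overline{B}_{\mathbb{X}})$ contains a genuine ball $r\,\overline{B}_{R}$ for some $r>0$: choosing preimages of a basis of $R$ spans a finite-dimensional $F\subseteq\mathbb{X}$ with $T|_F:F\to R$ a linear bijection between finite-dimensional quasi-normed spaces, hence bi-Lipschitz, so $T(\overline{B}_F)\supseteq r\,\overline{B}_{R}$. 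By monotonicity of $e_n$ in the set and by scaling, $e_n(T)\ge e_n(r\,\overline{B}_{R},\mathbb{Y})=r\,e_n(\overline{B}_{R},\mathbb{Y})\ge c\,2^{-(n-1)/m}$, the last step being the finite-dimensional packing bound with separated points taken inside $R$.

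For the converse, suppose the two-sided estimate holds with exponent $1/m$. The lower bound forces $\mathrm{rank}\,T\ge m$: if instead $\mathrm{rank}\,T=k<m$ (including $T=0$), the already proven upper bound gives $e_n(T)\le C\|T\|\,2^{-(n-1)/k}$, which decays strictly faster than $2^{-(n-1)/m}$ and contradicts $e_n(T)\ge c\,2^{-(n-1)/m}$ for large $n$. The upper bound forces $\mathrm{rank}\,T\le m$: if $\mathrm{rank}\,T\ge m+1$ (finite or infinite), pick an $(m+1)$-dimensional subspace $E\subseteq R$ and, exactly as above, a ball $r\,\overline{B}_E\subseteq T(\overline{B}_{\mathbb{X}})$; the packing bound then yields $e_n(T)\ge c'\,2^{-(n-1)/(m+1)}$, which decays strictly slower than $2^{-(n-1)/m}$ and contradicts the upper bound for large $n$. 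Combining the two gives $\mathrm{rank}\,T=m$, and the complex case is verbatim with $m$ replaced by $2m$ throughout.

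The part I expect to be most delicate is the finite-dimensional volume lemma itself in the quasi-Banach setting: the failure of the ordinary triangle inequality means every packing/covering comparison carries a factor of the quasi-norm constant $C_E$ (resp. $C_{\mathbb{Y}}$), and one must check that these factors are independent of $n$ so that they are harmlessly absorbed into $c$ and $C$. A secondary technical point is the non-degeneracy step — that $T(\overline{B}_{\mathbb{X}})$ genuinely contains a ball of its range — which rests on a finite-dimensional isomorphism (an open-mapping/bi-Lipschitz fact) rather than on convexity; if one prefers, the covering bound can equivalently be phrased through $Q^{\mathbb{Y}}_V T$ using the quotient-ball identity of Lemma~\ref{lem:quotient ball}.
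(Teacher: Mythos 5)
Your proposal is correct, but it cannot be compared step-by-step with the paper's own proof for a simple reason: the paper does not prove this lemma at all. Its ``proof'' is a pointer to the literature --- the statement is taken from Carl--Stephani (p.~21), and the reader is told its validity ``can be understood by reconstructing'' Satz 3.31 of Haroske's lecture notes for operators between quasi-Banach spaces. What you have written is a self-contained argument that fills exactly this gap, and it is in substance the standard volumetric proof that those references use in the Banach setting, correctly transported to quasi-norms: covering a $d$-dimensional ball by dilates of \emph{itself} makes the volume of the (possibly non-convex) quasi-ball cancel, so the two-sided bound $e_n(\overline{B}_E,E)\sim 2^{-(n-1)/d}$ carries constants depending only on $d$ and the quasi-norm constant; the inclusions $T(\overline{B}_{\mathbb{X}})\subseteq \Vert T\Vert\,\overline{B}_{\mathcal{R}(T)}$ and $r\,\overline{B}_{\mathcal{R}(T)}\subseteq T(\overline{B}_{\mathbb{X}})$ (the latter from the finite-dimensional bi-Lipschitz fact) give the forward direction; and your converse --- comparing decay rates against rank $k<m$ on one side and against an $(m+1)$-dimensional subspace of the range on the other, which also covers infinite rank --- is exactly the right way to obtain the ``only if''. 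Two points deserve to be made explicit if this were written out. First, the volume argument presumes measurability and $0<\mathrm{vol}(\overline{B}_E)<\infty$ for the quasi-ball; this is not automatic (a quasi-norm need not even be continuous), but is settled by passing to the equivalent $\varrho$-norm of Theorem \ref{thm:Normequivalence}, whose equivalence constants depend only on the quasi-norm constant, so nothing is lost. Second, in all lower bounds the covering balls have centers anywhere in $\mathbb{Y}$, not in the finite-dimensional subspace; your observation that two points further apart than $2C_{\mathbb{Y}}\varepsilon$ cannot lie in one $\varepsilon$-ball (equivalently: intersect each covering ball with the subspace and recenter, at the cost of a factor $2C_{\mathbb{Y}}$) is precisely what legitimizes the volume comparison inside the subspace, and that factor should appear explicitly in the constants. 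With these two points pinned down, your argument is complete and, unlike the paper, actually proves the lemma rather than citing it.
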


\begin{proof}
This statement is taken from \cite[p. 21]{Carl=000026Stephani} and
is slightly altered here for the context of quasi-Banach spaces. It
is used here without proof, but its validity can be understood, by
reconstructing \cite[Satz 3.31]{HaroskeAT} for general linear operators
acting between quasi-Banach spaces. (See also \cite[Üb. III-4]{HaroskeAT}.)
\end{proof}
With this lemma in mind, we can state the following proposition, which
will become useful later on. It can be found in \cite[Sect. 3.1., Thm. 3.1.1.]{Carl=000026Stephani}
for the case of Banach spaces and is slightly modified here, to fit
in the context of quasi-Banach spaces. A similar theorem, but more
general, as well as its proof can also be found in \cite[Subsect. 1.3.3.]{Edmunds and Triebel}.
\begin{prop}
\label{pro:supremumsdinge}Let $0<p<\infty$ and let $T\in\mathcal{L}\left(\mathbb{X},\mathbb{Y}\right)$,
where $\mathbb{X}$ and $\mathbb{Y}$ are arbitrary quasi-Banach spaces
with constants $C_{\mathbb{X}}$ and $C_{\mathbb{Y}}$. Then

\[
\sup_{1\leq k\leq m}k^{1/p}e_{k}\left(T\right)\leq c_{p}\sup_{1\leq k\leq m}k^{1/p}a_{k}\left(T\right)\,\,\mbox{for }m=1,2,3\ldots.
\]
\end{prop}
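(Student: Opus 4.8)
The plan is to pass first to an equivalent $\varrho$-norm via Theorem \ref{thm:Normequivalence}, so that the additivity (A$_{e}$) of Theorem \ref{thm:(Properties-of-e)} takes the clean form $e_{m+n-1}(S+T)^{\varrho}\le e_{m}(S)^{\varrho}+e_{n}(T)^{\varrho}$. Iterating it, for finitely many operators $R_{1},\dots,R_{s}$ and indices $n_{1},\dots,n_{s}$ one obtains $e_{N}\!\left(\sum_{j}R_{j}\right)^{\varrho}\le\sum_{j}e_{n_{j}}(R_{j})^{\varrho}$ with $N=\sum_{j}n_{j}-(s-1)$. Abbreviate $\mu_{m}:=\sup_{1\le k\le m}k^{1/p}a_{k}(T)$, so that $a_{k}(T)\le\mu_{m}k^{-1/p}$ for $1\le k\le m$; it then suffices to establish $e_{k}(T)\le c_{p}\,\mu_{m}\,k^{-1/p}$ for each individual $1\le k\le m$.

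First I would build a dyadic approximation scheme. For $j=0,1,\dots,s$ choose $S_{j}\in\mathcal{L}(\mathbb{X},\mathbb{Y})$ with $\mathrm{rank}\,S_{j}<2^{j}$ (hence $S_{0}=0$) and $\Vert T-S_{j}\Vert\le 2\,a_{2^{j}}(T)$, and put $R_{j}:=S_{j}-S_{j-1}$ for $1\le j\le s$. Then $T=\sum_{j=1}^{s}R_{j}+(T-S_{s})$ with $\mathrm{rank}\,R_{j}<2^{j+1}$, and the $\varrho$-triangle inequality combined with the monotonicity of approximation numbers gives $\Vert R_{j}\Vert\le c\,a_{2^{j-1}}(T)\le c'\,\mu_{m}\,2^{-j/p}$ as long as $2^{j}\le m$. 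The tail is handled by (M$_{e}$) via $e_{1}(T-S_{s})\le c\,\Vert T-S_{s}\Vert\le c\,\mu_{m}\,2^{-s/p}$, and I would assign it the entropy index $1$.

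Next I would feed each finite-rank block $R_{j}$ into Lemma \ref{lem:Estimation unproved}: since $\mathrm{rank}\,R_{j}<2^{j+1}$, the real case yields $e_{n_{j}}(R_{j})\le C\,\Vert R_{j}\Vert\,2^{-(n_{j}-1)/2^{j+1}}$ (the complex case being identical after replacing $2^{j+1}$ by $2^{j+2}$). The decisive choice is the entropy budget: taking $n_{j}$ of order $2^{j+1}(s-j)$, with a proportionality constant depending on $p$, forces the decay factor $2^{-(n_{j}-1)/2^{j+1}}$ to beat the growth factor $2^{-j/p}$ down to the target level, so that $e_{n_{j}}(R_{j})\le C\,\mu_{m}\,2^{-s/p}\,2^{-\gamma(s-j)}$ for some $\gamma>0$. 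Summing the $\varrho$-powers over $j$ then gives a convergent geometric series, and the iterated additivity produces $e_{N}(T)^{\varrho}\le C\,\mu_{m}^{\varrho}\,2^{-s\varrho/p}$, where the total index obeys $N=\sum_{j}n_{j}-(s-1)\le C_{1}2^{s}$ because $\sum_{j}2^{j}(s-j)=2^{s}\sum_{i\ge 0}i\,2^{-i}\le 2\cdot 2^{s}$.

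Finally, for a given $k$ I would select $s$ maximal with $N(s)\le k$; maximality forces $2^{s}\ge c\,k$, and then $k\ge N(s)$ together with (M$_{e}$) gives $e_{k}(T)\le e_{N(s)}(T)\le C\,\mu_{m}\,2^{-s/p}\le c_{p}\,\mu_{m}\,k^{-1/p}$. Taking the supremum over $1\le k\le m$ yields the assertion, the constant $c_{p}$ depending on $p$, on $\varrho$, and on $C_{\mathbb{Y}}$; along the way one checks that the largest index used, $2^{s}$, stays $\le m$, so that every application of $a_{2^{j}}(T)\le\mu_{m}2^{-j/p}$ is legitimate. The main obstacle is exactly the budgeting in the third step: the indices $n_{j}$ must be distributed so that the entropy contributions of all blocks simultaneously decay geometrically toward $\mu_{m}2^{-s/p}$ while their sum remains of order $2^{s}$, and it is here that the summability of $\sum_{i}i\,2^{-i}$ and the careful tracking of the $\varrho$-norm constants are essential.
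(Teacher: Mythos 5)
Your proposal is correct and takes essentially the same route as the paper's own proof: the same dyadic scheme $T=\sum_{j}(A_{j}-A_{j-1})+(T-A_{N})$ with $\mathrm{rank}\,(A_{j}-A_{j-1})<2^{j+1}$, the same application of Lemma \ref{lem:Estimation unproved} to each block with entropy budget $n_{j}$ of order $2^{j+1}(N-j)$, the same geometric-series summation of the $\varrho$-powers, and the same final passage from the dyadic total index (of order $2^{N}$) to arbitrary $k$ via monotonicity. The two points you flag as needing care (the bound $\sum_{j}(N-j)2^{j+1}\leq 4\cdot 2^{N}$ and the legitimacy of using $a_{2^{j}}(T)$ only for indices within range) are exactly the details the paper checks.
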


\begin{proof}
As we have shown in Theorem \ref{thm:Normequivalence} we can find
a proper $\varrho$ - norm, which is equivalent to the quasi-norm
of $\mathbb{Y}$. Thus it is enough to show, that the estimate stands
for that case. We proceed by looking at dyadic numbers $n=2^{N},\,N\in\mathbb{N}$.
According to the definition of approximation numbers $a_{n}\left(T\right)$
(Definition \ref{def:Approximationszahlen}), we find operators $A_{j}\in\mathcal{L}\left(\mathbb{X},\mathbb{Y}\right)$
with $\mbox{rank }A_{j}<2^{j}$, such that

\begin{equation}
\Vert T-A_{j}\Vert\leq a_{2^{j}}\left(T\right)+\varepsilon_{j}\,\mbox{for }j=0,1,2,\ldots,N\label{eq:Propositiona}
\end{equation}

and for arbitrary $\varepsilon_{j}>0$. Since we have no equality
in Theorem \ref{thm:Properties a} (R$_{a}$), we are not sure if
it might happen that $a_{2^{j}}\left(T\right)=0$, although $\mbox{rank }T\geq2^{j}$.
If $a_{2^{j}}\left(T\right)\neq0$ we set $\varepsilon_{j}:=a_{2^{j}}\left(T\right)$

\begin{equation}
\Vert T-A_{j}\Vert\leq2a_{2^{j}}\left(T\right)\,\mbox{for }j=0,1,2,\ldots,N\,\label{eq:propositiona_1}
\end{equation}
~

where $A_{0}=0$. For simplicity we may and shall assume that $a_{2^{j}}\neq0$,
thus we can set $\varepsilon_{j}:=a_{2^{j}}\left(T\right)$ for all
$j=0,1,\ldots,N$. The argument is modified in an obvious way otherwise.
We may now take differences $A_{j}-A_{j-1}$ for $j=1,\ldots,N$,
which amounts to $\mbox{rank }\left(A_{j}-A_{j-1}\right)<2^{j+1}$.
With that in mind, we find another representation of $T$

\[
T=\sum_{j=1}^{N}\left(A_{j}-A_{j-1}\right)+\left(T-A_{N}\right).
\]

On the other hand, we may successively conclude that 

\begin{equation}
\left(e_{n_{1}+\ldots+n_{N}-\left(N-1\right)}\left(T\right)\right)^{\varrho}\leq\sum_{j=1}^{N}\left(e_{n_{j}}\left(A_{j}-A_{j-1}\right)\right)^{\varrho}+\Vert T-A_{N}\Vert^{\varrho}\label{eq:Proposition_a2}
\end{equation}

for $n_{j}$ natural numbers to be chosen later.

Without loss of generality, we can say, that $T$ acts between real
quasi-Banach spaces and because of $\mbox{rank }\left(A_{j}-A_{j-1}\right)<2^{j+1}$
and Lemma \ref{lem:Estimation unproved} we have the estimate

\begin{equation}
e_{n_{j}}\left(A_{j}-A_{j-1}\right)^{\varrho}\leq C\cdot2^{-\left(n_{j}-1\right)\varrho/2^{j+1}}\Vert A_{j}-A_{j-1}\Vert^{\varrho},\,\,\mbox{for }j=1,2,\ldots,N\label{eq:Proposition_b}
\end{equation}

for some $C>0$. If $T$ acts between complex quasi-Banach spaces
, we use the second statement of Lemma \ref{lem:Estimation unproved}.
This has no effect on the proof except that we get another constant.
We proceed for the real quasi-Banach space case, by using the triangle
inequality, and have 

\begin{equation}
\Vert A_{j}-A_{j-1}\Vert^{\varrho}\leq\Vert A_{j}-T\Vert^{\varrho}+\Vert T-A_{j-1}\Vert^{\varrho}\overset{\tiny\eqref{eq:propositiona_1}}{\leq}2^{\varrho+1}\left(a_{2^{j-1}}\left(T\right)\right)^{\varrho}\label{eq:Proposition_c}
\end{equation}

for $j=1,2,\ldots,N$, where we also used the monotonicity of the
approximation numbers in the last estimation. If we combine \eqref{eq:Proposition_b}
and \eqref{eq:Proposition_c}, we can conclude that

\begin{equation}
\left(e_{n_{j}}\left(A_{j}-A_{j-1}\right)\right)^{\varrho}\leq C_{2}\cdot2^{-\left(n_{j}-1\right)\varrho/2^{j+1}}\left(a_{2^{j-1}}\left(T\right)\right)^{\varrho},\,\,\mbox{for }j=1,2,\ldots,N\label{eq:Proposition_d}
\end{equation}

for some $C_{2}>0$. Now we estimate \eqref{eq:Proposition_a2} by
using \eqref{eq:propositiona_1} and \eqref{eq:Proposition_d}, to
get

\begin{equation}
\left(e_{n_{1}+\ldots+n_{N}-\left(N-1\right)}\left(T\right)\right)^{\varrho}\leq C_{2}\sum_{j=1}^{N}2^{-\left(n_{j}-1\right)\varrho/2^{j+1}}\left(a_{2^{j-1}}\left(T\right)\right)^{\varrho}+\left(2\cdot a_{2^{N}}\left(T\right)\right)^{\varrho}.\label{eq:Proposition_e}
\end{equation}

We shall now have a look at the sum. It can easily be seen that 

\begin{eqnarray*}
\sum_{j=1}^{N}2^{-\left(n_{j}-1\right)\varrho/2^{j+1}}\left(a_{2^{j-1}}\left(T\right)\right)^{\varrho} & \leq & \left(\sum_{j=1}^{N}2^{-\left(n_{j}-1\right)\frac{\varrho}{2^{j+1}}-\left(j-1\right)\frac{\varrho}{p}}\right)\sup_{1\leq j\leq N}2^{\left(j-1\right)\frac{\varrho}{p}}\left(a_{2^{j-1}}\left(T\right)\right)^{\varrho}\\
 & \leq & \left(\sum_{j=1}^{N}2^{-\left(n_{j}-1\right)\frac{\varrho}{2^{j+1}}-\left(j-1\right)\frac{\varrho}{p}}\right)\sup_{1\leq j\leq2^{N}}j^{\frac{\varrho}{p}}\left(a_{j}\left(T\right)\right)^{\varrho}.
\end{eqnarray*}

Furthermore we can find an upper bound for $\left(a_{2^{N}}\left(T\right)\right)^{\varrho}$,
which is given through $\left(a_{2^{N}}\left(T\right)\right)^{\varrho}\leq2^{-N\varrho/p}\sup_{1\leq j\leq2^{N}}j^{\varrho/p}\left(a_{j}\left(T\right)\right)^{\varrho}$.
Combining these two estimates with \eqref{eq:Proposition_e} amounts
to

\begin{equation}
\left(e_{n_{1}+\ldots+n_{N}-\left(N-1\right)}\left(T\right)\right)^{\varrho}\leq\left(C_{2}\sum_{j=1}^{N}2^{-\left(n_{j}-1\right)\frac{\varrho}{2^{j+1}}-\left(j-1\right)\frac{\varrho}{p}}+2^{\varrho}\cdot2^{-\frac{N\varrho}{p}}\right)\sup_{1\leq j\leq2^{N}}j^{\varrho/p}\left(a_{j}\left(T\right)\right)^{\varrho}.\label{eq:Proposition_f}
\end{equation}

We will now choose the still free natural numbers $n_{j}$ in a way,
that the large sum can be estimated by terms, which can easier be
expressed. Therefore we choose a natural number $1+\frac{1}{p}\leq K\leq2+\frac{1}{p}$
and set $n_{j}=1+K\left(N-j\right)2^{j+1}$ for $j=1,2,\ldots,N$.
We can now see that $2^{-\left(n_{j}-1\right)/2^{j+1}}=2^{-K\left(N-j\right)}$.
Using properties of a finite geometric series, we conclude

\begin{eqnarray*}
\sum_{j=1}^{N}2^{-K\left(N-j\right)\varrho-\left(j-1\right)\varrho/p} & = & 2^{\varrho/p}2^{-\varrho KN}\sum_{j=1}^{N}2^{\left(K-1/p\right)\varrho j}\\
 & = & 2^{\varrho/p}2^{-\varrho KN}2^{\left(K-1/p\right)\varrho}\frac{2^{\left(K-1/p\right)\varrho N}-1}{2^{\left(K-1/p\right)\varrho}-1}\\
 & = & 2^{-KN\varrho}2^{K\varrho}\frac{2^{\left(K-1/p\right)\varrho N}-1}{2^{\left(K-1/p\right)\varrho}-1}\\
 & \leq & C_{3}2^{-N\varrho/p}\,\,\,\mbox{for some }C_{3}>0,
\end{eqnarray*}

where the last inequality is established through $1+\frac{1}{p}\leq K\leq2+\frac{1}{p}$.
Thus we can now estimate \eqref{eq:Proposition_f} through 

\[
\left(e_{n_{1}+\ldots+n_{N}-\left(N-1\right)}\left(T\right)\right)^{\varrho}\leq2^{-N\varrho/p}\cdot C_{4}\sup_{1\leq j\leq2^{N}}j^{\varrho/p}\left(a_{j}\left(T\right)\right)^{\varrho}
\]

for some $C_{4}>0$. We now need to estimate $n_{1}+\ldots+n_{N}-\left(N-1\right)$.
Since we have chosen $n_{j}$ already for $j=1,\ldots,N$, we conclude,
that $n_{1}+\ldots+n_{N}-N=K\cdot\sum_{j=1}^{N}\left(N-j\right)2^{j+1}$.
By induction we derive that $\sum_{j=1}^{N}\left(N-j\right)2^{j+1}=4\left(2^{N}-\left(N+1\right)\right)$
is valid for $N\in\mathbb{N}$ and thus $n_{1}+\ldots+n_{N}-\left(N-1\right)\leq4\cdot K\cdot2^{N}$.
Using the monotonicity of $e_{n}\left(T\right)$ we obtain

\[
\left(e_{4K2^{N}}\left(T\right)\right)^{\varrho}\leq2^{-N\varrho/p}C_{4}\cdot\sup_{1\leq j\leq2^{N}}j^{\varrho/p}\left(a_{j}\left(T\right)\right)^{\varrho},\,\,\mbox{for }N=1,2,\ldots.
\]

We shall next estimate $e_{n}\left(T\right)$ for an arbitrary natural
number $n$. Let us first consider the case, $n\geq8K$ (with $K$
given as above). We then find a natural number $N$ such that $8K2^{N-1}\leq n\leq8K2^{N}$.
Hence 

\[
2^{-N\varrho/p}\sup_{1\leq j\leq2^{N}}j^{\varrho/p}\left(a_{j}\left(T\right)\right)^{\varrho}\leq\left(8K\right)^{\varrho/p}n^{-\varrho/p}\sup_{1\leq j\leq n}j^{\varrho/p}\left(a_{j}\left(T\right)\right)^{\varrho}.
\]

If we again use the monotonicity of the entropy numbers in combination
with $n\geq4K2^{N}$, we have

\[
\left(e_{n}\left(T\right)\right)^{\varrho}\leq\left(e_{4K2^{N}}\left(T\right)\right)^{\varrho}\leq C_{4}\left(8K\right)^{\varrho/p}n^{-\varrho/p}\sup_{1\leq j\leq2^{N}}j^{\varrho/p}\left(a_{j}\left(T\right)\right)^{\varrho}
\]

and this brings us finally to 

\[
n^{\varrho/p}\left(e_{n}\left(T\right)\right)^{\varrho}\leq C_{4}\left(8K\right)^{\varrho/p}\sup_{1\leq j\leq n}j^{\varrho/p}\left(a_{j}\left(T\right)\right)^{\varrho}.
\]

We will now see, that this estimate holds in the case $1\leq n\leq8K$,
since

\[
n^{\varrho/p}\left(e_{n}\left(T\right)\right)^{\varrho}\leq\left(8K\right)^{\varrho/p}\Vert T\Vert^{\varrho}\leq C_{4}\left(8K\right)^{\varrho/p}\sup_{1\leq j\leq n}j^{\varrho/p}\left(a_{j}\left(T\right)\right)^{\varrho}.
\]

We already know by construction that $K\leq2+\frac{1}{p}$ and by
taking the $\varrho$ - th root and the supremum of both sides of
the inequality with respect to $n\leq m$, we finally arrive at 

\[
\sup_{1\leq n\leq m}n^{1/p}e_{n}\left(T\right)\leq c_{p}\sup_{1\leq n\leq m}n^{1/p}a_{n}\left(T\right)\,\,\mbox{for }m=1,2,\ldots.
\]
\end{proof}
This inequality will be needed in the following theorem, which is
again slightly modified taken from \cite[Sect. 3.1, Thm.3.1.1.; Sect. 3.5, Prop. 3.5.3. ]{Carl=000026Stephani},
to fit in the context of quasi-Banach spaces.
\begin{thm}
\label{thm:Relationship of a_n and e_n}(Relationship of $a_{n}$
\& $e_{n}$)

Let $\mathbb{X}$ and $\mathbb{Y}$ be quasi-Banach spaces with constants
$C_{\mathbb{X}}$, $C_{\mathbb{Y}}$ and $T\in\mathcal{L}\left(\mathbb{X},\mathbb{Y}\right)$.
Furthermore let $0<p<\infty,$ then we have
\end{thm}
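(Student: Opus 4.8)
The plan is to derive the whole statement from Proposition \ref{pro:supremumsdinge}, which is the genuine work of this section; the theorem should then be a comparatively short consequence. That proposition already supplies, for every finite $m$, the uniform estimate $\sup_{1\le k\le m} k^{1/p} e_k(T) \le c_p \sup_{1\le k\le m} k^{1/p} a_k(T)$ with a constant $c_p$ independent of $m$ and of $T$. So the first step is simply to let $m\to\infty$. Both sides are nondecreasing in $m$, hence the supremum over all $k\ge 1$ is the monotone limit of the truncated suprema, and the inequality survives the passage to the limit, giving $\sup_{k\ge 1} k^{1/p} e_k(T) \le c_p \sup_{k\ge 1} k^{1/p} a_k(T)$. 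No new idea is needed here beyond the order-preservation of suprema.

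For the rate-transfer content I would argue as follows. If the approximation numbers decay like $a_k(T) = O(k^{-1/p})$, that is, if the right-hand supremum is finite, then the displayed inequality immediately forces $e_k(T) = O(k^{-1/p})$ with the same implied constant up to the factor $c_p$; in particular $n^{1/p} e_n(T) \le c_p \sup_{1\le k\le n} k^{1/p} a_k(T)$ for every single $n$, which is the Jackson-type estimate announced in the introduction to this section. Exactly as in the proposition, the reduction to an equivalent $\varrho$-norm via Theorem \ref{thm:Normequivalence} and the split into the real and complex cases of Lemma \ref{lem:Estimation unproved} are harmless and affect only the constant, so I would prove the bound for the $\varrho$-norm and transfer back at the end.

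The delicate point, and the step I expect to be the main obstacle, is the sharpened little-$o$ assertion that $\lim_{k} k^{1/p} a_k(T) = 0$ implies $\lim_{k} k^{1/p} e_k(T) = 0$ (equivalently $\limsup_n n^{1/p} e_n(T) \le c_p \limsup_k k^{1/p} a_k(T)$). The crude inequality above does not yield this, because its right-hand side is a supremum and need not tend to zero. To obtain it I would revisit the dyadic estimate \eqref{eq:Proposition_f} underlying Proposition \ref{pro:supremumsdinge} and split the controlling sum at an index $j_0$ chosen so that $2^{(j-1)/p} a_{2^{j-1}}(T) < \varepsilon$ for all $j > j_0$. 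After division by $n^{1/p}$ (equivalently after the factor $2^{-N\varrho/p}$), the finite head is a fixed quantity and becomes negligible as $N\to\infty$, while the tail is dominated by $\varepsilon^{\varrho}$ times the same convergent geometric factor that already appeared in the proposition. Letting first $N\to\infty$ and then $\varepsilon\downarrow 0$ produces the $\limsup$ inequality, of which the little-$o$ implication is the special case where the right-hand side is zero.

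Finally I would assemble the pieces: the monotone-limit step gives the global supremum inequality, the per-$n$ form gives the Jackson-type inequality, and the head-tail refinement gives the limit statement, each first for the equivalent $\varrho$-norm and then transferred back to the original quasi-norm through the equivalence constants of Theorem \ref{thm:Normequivalence}, with all multiplicative losses absorbed into the final constant $c_p$.
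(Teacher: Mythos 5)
Your treatment of part (i) is essentially the paper's: Proposition \ref{pro:supremumsdinge} with $m=n$ gives $n^{1/p}e_{n}\left(T\right)\leq c_{p}\sup_{1\leq k\leq n}k^{1/p}a_{k}\left(T\right)$, and to reach the form actually asserted in (i) you only need the one-line observation that, by monotonicity of the approximation numbers, $k\,a_{k}\left(T\right)^{p}\leq\sum_{i=1}^{k}a_{i}\left(T\right)^{p}$, so that the supremum on the right is dominated by $\left(\sum_{i=1}^{n}a_{i}\left(T\right)^{p}\right)^{1/p}$ and hence $e_{n}\left(T\right)\leq c_{p}\left(\frac{1}{n}\sum_{i=1}^{n}a_{i}\left(T\right)^{p}\right)^{1/p}$. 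That step is missing in your write-up but trivial; the monotone passage $m\to\infty$ you begin with is harmless, though not needed for the statement.

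The genuine gap is in part (ii), which asserts the \emph{unweighted} inequality $\lim_{n\rightarrow\infty}e_{n}\left(T\right)\leq\lim_{n\rightarrow\infty}a_{n}\left(T\right)$ with constant $1$. Your head-and-tail refinement of \eqref{eq:Proposition_f} proves a statement about the weighted quantities $k^{1/p}e_{k}\left(T\right)$ and $k^{1/p}a_{k}\left(T\right)$, with the constant $c_{p}$; this neither implies (ii) nor can be repaired to give it. It is vacuous whenever $\sup_{k}k^{1/p}a_{k}\left(T\right)=\infty$, which happens precisely when $\lim_{n}a_{n}\left(T\right)>0$ and also when $a_{n}\left(T\right)\rightarrow0$ more slowly than $n^{-1/p}$ (for instance $a_{n}\sim1/\log n$); and in the cases where it does apply it produces the factor $c_{p}$, not $1$. (Deducing $\lim_{n}e_{n}\leq c_{p}\lim_{n}a_{n}$ from (i) via Ces\`aro means suffers from the same constant.) The paper proves (ii) by an argument completely independent of Proposition \ref{pro:supremumsdinge}: fix $n$, take $\delta>a_{n}\left(T\right)$ and a rank-$<n$ operator $A$ with $\Vert T-A\Vert<\delta$; since $A$ has finite rank, $A\left(\overline{B}_{\mathbb{X}}\right)$ admits a finite $\varepsilon$-net $\left\{ y_{j}\right\} $, and then, working in an equivalent $\varrho$-norm, $T\left(\overline{B}_{\mathbb{X}}\right)\subseteq\bigcup_{j}\left\{ y_{j}+\left(\delta^{\varrho}+\varepsilon^{\varrho}\right)^{1/\varrho}\overline{B}_{\mathbb{Y}}\right\} $, so $\lim_{k}e_{k}\left(T\right)\leq\left(\delta^{\varrho}+\varepsilon^{\varrho}\right)^{1/\varrho}$. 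Letting $\varepsilon\downarrow0$ and then $\delta\downarrow a_{n}\left(T\right)$ gives $\lim_{k}e_{k}\left(T\right)\leq a_{n}\left(T\right)$ for every $n$, and letting $n\rightarrow\infty$ gives (ii). Some such covering argument, exploiting the compactness of the finite-rank approximant directly, is indispensable here; no manipulation of the dyadic estimate will yield the constant-free limit inequality.
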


\begin{lyxlist}{00.00.0000}
\item [{(i)}] $e_{n}\left(T\right)\leq c_{p}\left(\frac{1}{n}\sum_{i=1}^{n}\left(a_{i}\left(T\right)\right)^{p}\right)^{1/p}\,\,\mbox{for }n=1,2,\ldots.$
\item [{(ii)}] $\lim_{n\rightarrow\infty}e_{n}\left(T\right)\leq\lim_{n\rightarrow\infty}a_{n}\left(T\right)$. 
\end{lyxlist}
\begin{proof}
(i)

The proof of the estimate of entropy numbers by the arithmetic mean
of order $p$ of the approximation numbers is an immediate consequence
of Proposition \ref{pro:supremumsdinge}. With the monotonicity of
approximation numbers in mind, we will first have a look at 

\[
k^{1/p}a_{k}\left(T\right)=\left(ka_{k}\left(T\right)^{p}\right)^{1/p}\leq\left(\sum_{i=1}^{k}a_{i}\left(T\right)^{p}\right)^{1/p}.
\]

\begin{equation}
\Longrightarrow\sup_{1\leq k\leq n}k^{1/p}a_{k}\left(T\right)\leq\left(\sum_{i=1}^{n}a_{i}\left(T\right)^{p}\right)^{1/p}\label{eq:mean}
\end{equation}

And now for arbitrary $n\in\mathbb{N}$ we have

\begin{eqnarray*}
e_{n}\left(T\right) & \leq & n^{-1/p}\sup_{1\leq k\leq n}k^{1/p}e_{k}\left(T\right)\leq c_{p}\cdot n^{-1/p}\sup_{1\leq k\leq n}k^{1/p}a_{k}\left(T\right).\\
 & \overset{\tiny\eqref{eq:mean}}{\leq} & c_{p}\left(\frac{1}{n}\sum_{i=1}^{n}a_{i}\left(T\right)^{p}\right)^{1/p}.
\end{eqnarray*}

(ii)

Our first step will be, to show that $\lim_{k\rightarrow\infty}e_{k}\left(T\right)\leq a_{n}\left(T\right)$
for every $n\in\mathbb{N}$. So for fixed $n\in\mathbb{N}$ we may
and shall assume that $a_{n}\left(T\right)\neq0$, since $a_{n}\left(T\right)=0$
would mean, that $\lim_{n\rightarrow\infty}a_{n}\left(T\right)=0$
and therefore $T\in\mathcal{K}\left(\mathbb{X},\mathbb{Y}\right)$
. This however would result in $\lim_{n\rightarrow\infty}e_{n}\left(T\right)=0$,
which is the desired estimate. So let us choose an arbitrary $\delta>a_{n}\left(T\right)$.
By definition of the approximation numbers, we can find an operator
$A\in\mathcal{L}\left(\mathbb{X},\mathbb{Y}\right)$ with $\mbox{rank }A<n$
such that

\[
\Vert T-A\Vert<\delta.
\]

We find that the following inclusion is valid:

\begin{equation}
T\left(\overline{B}_{\mathbb{X}}\right)\subseteq\left(\Vert T-A\Vert^{\varrho}+\Vert A\Vert^{\varrho}\right)^{1/\varrho}\overline{B}_{\mathbb{Y}}.\label{eq:subset}
\end{equation}

Hence if we find a covering of the set $A\left(\overline{B}_{\mathbb{X}}\right)$,
we have also found one for $T\left(\overline{B}_{\mathbb{X}}\right)$.
$A$ is a finite rank operator, because $\mbox{rank }A<n$, hence
it is compact. Therefore for any given $\varepsilon>0$ we can find
finitely many elements $y_{j}\in\mathbb{Y}$, for $j=1,\ldots,m$,
such that 

\[
A\left(\overline{B}_{\mathbb{X}}\right)\subseteq\bigcup_{j=1}^{m}\left\{ y_{j}+\varepsilon\overline{B}_{\mathbb{Y}}\right\} 
\]

indeed is a covering. With \eqref{eq:subset} we can now conclude
that

\[
T\left(\overline{B}\mathbb{_{X}}\right)\subseteq\bigcup_{j=1}^{m}\left\{ y_{j}+\left(\delta^{\varrho}+\varepsilon^{\varrho}\right)^{1/\varrho}\overline{B}_{\mathbb{Y}}\right\} .
\]

By definition and monotonicity of the entropy numbers we get

\[
\lim_{k\rightarrow\infty}e_{k}\left(T\right)\leq e_{n}\left(T\right)\leq\left(\delta^{\varrho}+\varepsilon^{\varrho}\right)^{1/\varrho}.
\]

Since $\varepsilon>0$ was arbitrarily chosen, we let $\varepsilon\downarrow0$
and obtain $\lim_{k\rightarrow\infty}e_{k}\left(T\right)\leq\delta$
and hence by taking the infimum over all such $\delta$, we get $\lim_{k\rightarrow\infty}e_{k}\left(T\right)\leq a_{n}\left(T\right)$
for $n=1,2,\ldots$. Therefore we get

\[
\lim_{k\rightarrow\infty}e_{k}\left(T\right)\leq\lim_{k\rightarrow\infty}a_{k}\left(T\right).
\]
\\
\\
\\
\end{proof}
\begin{rem}
If $\mathbb{H}_{1}$ and $\mathbb{H}_{2}$ are Hilbert spaces and
$T\in\mathcal{L}\left(\mathbb{H}_{1},\mathbb{H}_{2}\right)$, then

\[
\sup_{1\leq k<\infty}2^{-n/k}\left(\prod_{i=1}^{k}a_{i}\left(T\right)\right)^{1/k}\leq e_{n}\left(T\right)\leq14\cdot\sup_{1\leq k<\infty}2^{-n/k}\left(\prod_{i=1}^{k}a_{i}\left(T\right)\right)^{1/k}.
\]

The proof can be found in \cite[Sect. 3.4.; Thm. 3.4.2.]{Carl=000026Stephani}.
\end{rem}

\section{Relationship between Approximation and Kolmogorov Numbers}
\begin{thm}
\label{thm:relationship a d}(Relationship of $a_{n}$ \& $d_{n}$)

Let $\mathbb{X}$ and $\mathbb{Y}$ be quasi-Banach spaces and $T\in\mathcal{L}\left(\mathbb{X},\mathbb{Y}\right)$,
$n\in\mathbb{N}$. Then

\[
d_{n}\left(T\right)\leq a_{n}\left(T\right).
\]
\end{thm}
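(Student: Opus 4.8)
The plan is to exploit the fact that every finite-rank approximant to $T$ supplies, via its range, an admissible subspace for the Kolmogorov infimum. Concretely, I would fix an arbitrary operator $S\in\mathcal{L}\left(\mathbb{X},\mathbb{Y}\right)$ with $\mbox{rank }S<n$ and set $U:=\mathcal{R}\left(S\right)$. Since $\dim U=\mbox{rank }S<n$, this $U$ is a legitimate competitor in the infimum defining $d_{n}\left(T\right)$.

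The key pointwise estimate is immediate: for any $x\in\overline{B}_{\mathbb{X}}$ the vector $Sx$ lies in $U$, so it is available as a candidate in the inner infimum over $U$, giving
\[
\inf_{y\in U}\Vert Tx-y\Vert_{\mathbb{Y}}\leq\Vert Tx-Sx\Vert_{\mathbb{Y}}=\Vert\left(T-S\right)x\Vert_{\mathbb{Y}}\leq\Vert T-S\Vert.
\]
Because the right-hand side is independent of $x$, taking the supremum over all $x\in\overline{B}_{\mathbb{X}}$ preserves the inequality. As $U$ is an admissible subspace, the definition of the Kolmogorov number then yields
\[
d_{n}\left(T\right)\leq\sup_{\Vert x\Vert_{\mathbb{X}}\leq1}\inf_{y\in U}\Vert Tx-y\Vert_{\mathbb{Y}}\leq\Vert T-S\Vert.
\]

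Finally I would take the infimum over all $S\in\mathcal{L}\left(\mathbb{X},\mathbb{Y}\right)$ with $\mbox{rank }S<n$, which by Definition \ref{def:Approximationszahlen} is exactly $a_{n}\left(T\right)$, to conclude $d_{n}\left(T\right)\leq a_{n}\left(T\right)$. I do not anticipate a genuine obstacle here; the argument is essentially a one-line observation that the range of a finite-rank approximant is a subspace of the correct dimension. It is worth remarking only that the quasi-norm structure plays no role, since the proof uses nothing but the elementary pointwise bound $\Vert\left(T-S\right)x\Vert\leq\Vert T-S\Vert$ and never invokes the quasi-triangle inequality or its constant $C_{\mathbb{Y}}$.
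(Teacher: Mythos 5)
Your proposal is correct and follows essentially the same route as the paper: both take the range $U=\mathcal{R}(S)$ of a finite-rank approximant as the competitor subspace in the Kolmogorov infimum and reduce the pointwise bound $\inf_{y\in U}\Vert Tx-y\Vert_{\mathbb{Y}}\leq\Vert (T-S)x\Vert_{\mathbb{Y}}\leq\Vert T-S\Vert$ to the desired inequality. The only difference is cosmetic: the paper fixes an $\varepsilon$-almost-optimal approximant $L$ with $\Vert T-L\Vert\leq a_{n}(T)+\varepsilon$ and lets $\varepsilon\downarrow0$, whereas you bound $d_{n}(T)\leq\Vert T-S\Vert$ for every admissible $S$ and take the infimum directly, which is marginally cleaner but logically identical.
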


\begin{proof}
The proof is taken from \cite[Rem. p.50]{Carl=000026Stephani} and
\cite[Lem. 3.34]{HaroskeAT} for the case of Banach spaces and is
slightly altered here again. It starts with an arbitrary $\varepsilon>0$
and $n\in\mathbb{N}$. We know the following

\[
\exists L\in\mathcal{L}\left(\mathbb{X},\mathbb{Y}\right),\,\mbox{rank }L<n\,:\,\Vert T-L\Vert\leq a_{n}\left(T\right)+\varepsilon,
\]

since $a_{n}\left(T\right)$ is given through the best approximating
operator. Now we define $U_{n}:=\mathcal{R}\left(L\right)$ and see
through definition of the norm of an operator 

\[
\exists U_{n},\,\dim U_{n}<n\,\forall x\in\overline{B}_{\mathbb{X}}\,:\,\Vert Tx-Lx\Vert_{\mathbb{Y}}\leq a_{n}\left(T\right)+\varepsilon.
\]

Furthermore we get $Lx=:y\in\mathcal{R}\left(L\right)=U_{n}$, and
therefore

\[
\exists U_{n},\,\dim U_{n}<n\,\forall x\in\overline{B}_{\mathbb{X}}\,\exists y\in U_{n}\,:\,\Vert Tx-y\Vert_{\mathbb{Y}}\leq a_{n}\left(T\right)+\varepsilon.
\]

We have seen, that there exist such $y$ and since the inequality
is valid for arbitrary $x\in\overline{B}_{\mathbb{X}}$, we can take
the supremum of all $x\in\overline{B}_{\mathbb{X}}$ over the infimum
of these $y$, which results to 

\[
\exists U_{n},\,\dim U_{n}<n\,:\,\sup_{\Vert x\Vert_{\mathbb{X}}\leq1}\inf_{y\in U_{n}}\Vert Tx-y\Vert_{\mathbb{Y}}\leq a_{n}\left(T\right)+\varepsilon.
\]

Now taking the infimum over all such subspaces $U_{n}$ with $\dim U_{n}<n$,
yields

\[
d_{n}\left(T\right)<a_{n}\left(T\right)+\varepsilon.
\]

And since $\varepsilon>0$ was arbitrary, we let $\varepsilon\downarrow0$,
which is our wanted result.\\
\\
\end{proof}
\begin{rem}
Under the same conditions as in Theorem \ref{thm:relationship a d},
we find that

\[
a_{n}\left(T\right)\leq\left(2n\right)^{1/2}d_{n}\left(T\right)
\]

is also valid. This inequality is taken from \cite[Sect. 2.4. Prop. 2.4.6.]{Carl=000026Stephani}
in the case of Banach spaces. It is proved there with the help of
the lifting of an operator and their corresponding lifting constants,
which are, in case of Banach spaces, bounded. Since the proof of the
quasi-Banach space case would be completely analogue, we only refer
to the proposition of the above authors.
\end{rem}

~
\begin{rem}
For the matter of completeness, we will add without proof, that in
the case, where $\mathbb{X}$ is a Banach space, $\mathbb{H}$ a Hilbert
space and $T\in\mathcal{L}\left(\mathbb{X},\mathbb{H}\right)$, we
get even an equation

\[
d_{n}\left(T\right)=a_{n}\left(T\right).
\]

A proof can be found in \cite[Sect. 11., Prop. 11.6.2.]{Albrecht Pietsch 2}.
\end{rem}

\section{Relationship between Entropy and Kolmogorov Numbers }

At last we examine entropy numbers and Kolmogorov numbers. Equivalent
to Proposition \ref{pro:supremumsdinge} we can make the following
statement, taken from \cite[Lem 4.4.]{JanVybiral}, which is altered
here only in the exponent.
\begin{prop}
\label{pro:supremumsdinge2}Let $\alpha>0$, $0<p<\infty$ and $\mathbb{X}$
and $\mathbb{Y}$ be two quasi Banach spaces with constants $C_{\mathbb{X}}$
and $C_{\mathbb{Y}}$. Further let $T\in\mathcal{L}\left(\mathbb{X},\mathbb{Y}\right)$.
Then for $n\in\mathbb{N}$ there exists a constant $c>0$ such that 

\[
\sup_{1\leq k\leq n}k^{\alpha}e_{k}\left(T\right)\leq c_{p,\alpha}\cdot\sup_{1\leq k\leq n}k^{\alpha}d_{k}\left(T\right).
\]
\end{prop}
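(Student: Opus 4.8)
The plan is to imitate the proof of Proposition \ref{pro:supremumsdinge} line by line, replacing the decomposition of $T$ into finite rank operators (which is what approximation numbers naturally provide) by a decomposition of the set $T(\overline{B}_{\mathbb{X}})$ into finite dimensional ``layers'' (which is what Kolmogorov numbers naturally provide). As there, I would first invoke Theorem \ref{thm:Normequivalence} to pass to an equivalent $\varrho$-norm on $\mathbb{Y}$, so that entropy numbers become $\varrho$-subadditive, and I would carry out the main estimate only at the dyadic indices $n=2^{N}$, recovering arbitrary $1\le k\le m$ at the end by the monotonicity (M$_{e}$) of Theorem \ref{thm:(Properties-of-e)} together with a separate, trivial treatment of the finitely many small indices via $e_{k}(T)\le\Vert T\Vert=d_{1}(T)$.

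The heart of the argument is the construction of the layers. For $j=0,1,\dots,N$ I would use the equivalent description of Kolmogorov numbers in Proposition \ref{pro:alt kolmogorov}(i) to choose subspaces $N_{j}$ with $\dim N_{j}<2^{j}$ realising $T(\overline{B}_{\mathbb{X}})\subset N_{j}+2d_{2^{j}}(T)\overline{B}_{\mathbb{Y}}$, and then pass to the nested family $M_{j}:=N_{0}+\dots+N_{j}$, which still satisfies $\dim M_{j}<2^{j+1}$ and $T(\overline{B}_{\mathbb{X}})\subset M_{j}+2d_{2^{j}}(T)\overline{B}_{\mathbb{Y}}$. For a fixed $x\in\overline{B}_{\mathbb{X}}$, Theorem \ref{thm:bestapproximation} yields best approximations $y^{(j)}\in M_{j}$ of $y:=Tx$ with $\Vert y-y^{(j)}\Vert_{\mathbb{Y}}=\mbox{dist}(y,M_{j})\le2d_{2^{j}}(T)$; the telescoping $y=\sum_{j=1}^{N}(y^{(j)}-y^{(j-1)})+(y-y^{(N)})$ (with $y^{(0)}=0$) together with the quasi-triangle inequality and the monotonicity of $d_{k}$ gives layer components $u_{j}:=y^{(j)}-y^{(j-1)}\in M_{j}$ with $\Vert u_{j}\Vert_{\mathbb{Y}}\le c\,d_{2^{j-1}}(T)$. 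This is exactly the set analogue of the operator differences $A_{j}-A_{j-1}$, and it produces the Minkowski decomposition
\[
T(\overline{B}_{\mathbb{X}})\subset\sum_{j=1}^{N}\bigl(M_{j}\cap c\,d_{2^{j-1}}(T)\,\overline{B}_{\mathbb{Y}}\bigr)+2d_{2^{N}}(T)\,\overline{B}_{\mathbb{Y}}.
\]

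From here the estimate runs parallel to Proposition \ref{pro:supremumsdinge}. The covering argument used for (A$_{e}$) in Theorem \ref{thm:(Properties-of-e)} applies verbatim to Minkowski sums of sets and gives the $\varrho$-subadditivity $e_{(n_{1}+\dots+n_{N+1})-N}(A_{1}+\dots+A_{N+1})^{\varrho}\le\sum_{i}e_{n_{i}}(A_{i})^{\varrho}$; applied to the $N$ layers and the residual ball it bounds $e_{(n_{1}+\dots+n_{N})-(N-1)}(T)^{\varrho}$ by $\sum_{j=1}^{N}e_{n_{j}}(M_{j}\cap c\,d_{2^{j-1}}(T)\overline{B}_{\mathbb{Y}})^{\varrho}+(2d_{2^{N}}(T))^{\varrho}$. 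Each layer lives in a space of dimension $<2^{j+1}$, so Lemma \ref{lem:Estimation unproved} (applied to the identity of that space) gives the exponential bound $e_{n_{j}}(M_{j}\cap c\,d_{2^{j-1}}(T)\overline{B}_{\mathbb{Y}})\le C\,d_{2^{j-1}}(T)\,2^{-(n_{j}-1)/2^{j+2}}$. Choosing the natural number $K$ with $1+\alpha\le K\le2+\alpha$ and $n_{j}=1+K(N-j)2^{j+2}$ makes the exponential factor equal to $2^{-K(N-j)}$, and the resulting geometric sum is summed exactly as in Proposition \ref{pro:supremumsdinge} (with the exponent $1/p$ there replaced by $\alpha$), yielding $e_{cK2^{N}}(T)\le c_{p,\alpha}\,2^{-N\alpha}\sup_{1\le i\le2^{N}}i^{\alpha}d_{i}(T)$. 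Rescaling the index and taking suprema then gives the claim.

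I expect the construction of the Minkowski decomposition to be the only genuinely new point. The subtlety is that Kolmogorov numbers supply approximating \emph{subspaces}, not approximating \emph{operators}, so the clean operator subadditivity of (A$_{e}$) in Theorem \ref{thm:(Properties-of-e)} is not directly available; it has to be replaced by the set version, and the decomposition into bounded finite dimensional layers is precisely what makes it applicable. A pleasant consequence is that, once the decomposition is in place, the contributions enter additively in the $\varrho$-th power, so, just as for approximation numbers, an irregular decay of the sequence $\bigl(d_{2^{j}}(T)\bigr)_{j}$ causes no difficulty: a large value at a single level is isolated by the sum and absorbed by the exponential factor $2^{-K(N-j)}$.
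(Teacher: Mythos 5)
Your proof is correct, but it follows a genuinely different route from the paper's. The paper does not argue directly at all: it reduces the claim to Proposition \ref{pro:supremumsdinge} by citing Carl's theorem \cite{Carl}, which transfers the inequality $\sup_{1\le k\le n}k^{\alpha}e_{k}(T)\le c\,\sup_{1\le k\le n}k^{\alpha}s_{k}(T)$ between the approximation- and Kolmogorov-number scales in the Banach space case, and then cites \cite{BasBernAna} for the passage from Banach to quasi-Banach spaces. You instead re-run the proof of Proposition \ref{pro:supremumsdinge} from scratch, replacing the finite-rank approximants $A_{j}$ by the nested finite-dimensional subspaces $M_{j}=N_{0}+\dots+N_{j}$ supplied by Proposition \ref{pro:alt kolmogorov}(i), and replacing the operator decomposition $T=\sum_{j}(A_{j}-A_{j-1})+(T-A_{N})$ by a Minkowski-sum decomposition of the set $T(\overline{B}_{\mathbb{X}})$ into the layers $M_{j}\cap c\,d_{2^{j-1}}(T)\overline{B}_{\mathbb{Y}}$ plus a residual ball. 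The steps all check out: the telescoping estimate $\Vert y^{(j)}-y^{(j-1)}\Vert^{\varrho}\le(2d_{2^{j}}(T))^{\varrho}+(2d_{2^{j-1}}(T))^{\varrho}$, the dimension count $\dim M_{j}<2^{j+1}$ fed into Lemma \ref{lem:Estimation unproved} (applied to $\mbox{id}_{M_{j}}$, whose coverings inside $M_{j}$ are also coverings in $\mathbb{Y}$), the set version of (A$_{e}$) for Minkowski sums with index $(n_{1}+\dots+n_{N})-(N-1)$, and the choice $n_{j}=1+K(N-j)2^{j+2}$ with $1+\alpha\le K\le2+\alpha$, which reproduces the geometric-series computation of Proposition \ref{pro:supremumsdinge} verbatim with $1/p$ replaced by $\alpha$. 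What your approach buys is self-containedness: it stays entirely inside the thesis's own toolkit (Theorems \ref{thm:Normequivalence} and \ref{thm:bestapproximation}, Proposition \ref{pro:alt kolmogorov}, Lemma \ref{lem:Estimation unproved}), works directly in the quasi-Banach setting for every $\alpha>0$, and makes visible that the constant depends only on $\alpha$ and the quasi-norm constant of $\mathbb{Y}$ (through $\varrho$); what the paper's route buys is brevity, at the price of two external citations whose hypotheses it does not verify. One small point, which you share with the paper's own proof of Proposition \ref{pro:supremumsdinge}: if $d_{2^{j}}(T)=0$ for some $j$, a subspace realising $T(\overline{B}_{\mathbb{X}})\subset N_{j}+2d_{2^{j}}(T)\overline{B}_{\mathbb{Y}}$ need not exist, since the infimum in Proposition \ref{pro:alt kolmogorov}(i) need not be attained in the quasi-Banach case; the standard fix is to use $d_{2^{j}}(T)+\delta_{j}$ with $\delta_{j}\downarrow0$ at the end, exactly the ``obvious modification'' the paper invokes in the analogous degenerate case for approximation numbers.
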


\begin{proof}
According to \cite[Thm. 1.]{Carl} it is enough to show that

\[
\sup_{1\leq k\leq n}k^{\alpha}e_{k}\left(T\right)\leq c_{p,\alpha}\cdot\sup_{1\leq k\leq n}k^{\alpha}s_{k}\left(T\right)
\]

in the case of Banach spaces, where $s_{k}\left(T\right)$ either
denotes Kolmogorov or approximation numbers, since this statement
would be valid for the corresponding other number as well. We have
already shown this relation for approximation numbers in Proposition
\ref{pro:supremumsdinge}, thus the proof is done for Banach spaces.
To extend this result for the desired quasi-Banach spaces, we refer
to \cite[Lem. 1.]{BasBernAna}. 
\end{proof}
\begin{cor}
(Relationship of $e_{n}$ \& $d_{n}$) 

Let $\mathbb{X}$ and $\mathbb{Y}$ be quasi-Banach spaces with constants
$C_{\mathbb{X}}$, $C_{\mathbb{Y}}$ and $T\in\mathcal{L}\left(\mathbb{X},\mathbb{Y}\right)$.
Furthermore let $0<p<\infty,$ then we have

\[
e_{n}\left(T\right)\leq c_{p}\left(\frac{1}{n}\sum_{i=1}^{n}\left(d_{i}\left(T\right)\right)^{p}\right)^{1/p}\,\,\mbox{for }n=1,2,\ldots.
\]
\end{cor}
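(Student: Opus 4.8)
The plan is to imitate the proof of Theorem~\ref{thm:Relationship of a_n and e_n}(i) almost verbatim, replacing the approximation numbers by the Kolmogorov numbers throughout. Indeed the two statements are structurally identical, and all of the analytic content has already been isolated in Proposition~\ref{pro:supremumsdinge2}; what remains is only the same elementary manipulation used for the approximation-number analogue. First I would specialise Proposition~\ref{pro:supremumsdinge2} to the exponent $\alpha=1/p$, which gives
\[
\sup_{1\leq k\leq n}k^{1/p}e_{k}\left(T\right)\leq c_{p}\sup_{1\leq k\leq n}k^{1/p}d_{k}\left(T\right),
\]
where $c_{p}:=c_{p,1/p}$ depends only on $p$ since $p$ is fixed.

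Next I would use the monotonicity of the Kolmogorov numbers, namely property (M$_{d}$) of Theorem~\ref{thm:Properties d}, exactly where the monotonicity of the approximation numbers entered before. Since $d_{k}(T)\leq d_{i}(T)$ for $1\leq i\leq k$, we have $k\,d_{k}(T)^{p}\leq\sum_{i=1}^{k}d_{i}(T)^{p}$, and hence
\[
\sup_{1\leq k\leq n}k^{1/p}d_{k}\left(T\right)\leq\left(\sum_{i=1}^{n}d_{i}\left(T\right)^{p}\right)^{1/p}.
\]

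Finally I would chain the estimates together. Starting from the trivial bound $e_{n}(T)\leq n^{-1/p}\sup_{1\leq k\leq n}k^{1/p}e_{k}(T)$ (obtained by keeping the term $k=n$ in the supremum), then applying the specialised Proposition~\ref{pro:supremumsdinge2} and the monotonicity estimate in turn, yields
\[
e_{n}\left(T\right)\leq c_{p}\,n^{-1/p}\sup_{1\leq k\leq n}k^{1/p}d_{k}\left(T\right)\leq c_{p}\left(\frac{1}{n}\sum_{i=1}^{n}d_{i}\left(T\right)^{p}\right)^{1/p},
\]
which is the asserted inequality. I do not anticipate any real obstacle: the whole difficulty sits in Proposition~\ref{pro:supremumsdinge2}, and the only point demanding a moment's care is that fixing $\alpha=1/p$ turns the constant $c_{p,\alpha}$ into one depending on $p$ alone, so that it absorbs cleanly into the final $c_{p}$.
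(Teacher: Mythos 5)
Your proposal is correct and is exactly the paper's own argument: the paper's proof is the one-line remark that the claim follows from Proposition \ref{pro:supremumsdinge2} with $\alpha=\tfrac{1}{p}$ by repeating the argument of Theorem \ref{thm:Relationship of a_n and e_n}(i), which is precisely the chain of estimates (monotonicity of $d_{k}$, then the supremum bound) that you spell out.
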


\begin{proof}
Based on Proposition \ref{pro:supremumsdinge2} with $\alpha=\frac{1}{p}$,
the proof follows Theorem \ref{thm:Relationship of a_n and e_n} (i)
analogously.
\end{proof}

\section{Axiomatic Theory of $s$-Numbers}

If we compare Theorems \ref{thm:(Properties-of-e)}, \ref{thm:Properties a}
and \ref{thm:Properties d} we find many similarities, like the monotonicity
or additivity of the corresponding numbers. There is however another
way to introduce numbers like approximation- and Kolmogorov numbers.
This axiomatic way and goes back to Albrecht Pietsch. For further
detail one may have a look at \cite[Sect. 2.2.]{Albrecht Pietsch}.
To fit in the context of quasi-Banach spaces we will follow the notation
of \cite[Sect. 2.4.]{JanVybiral}.
\begin{defn}
($s$ - numbers )

Let $\mathbb{W}$,$\mathbb{X}$,$\mathbb{Y}$ and $\mathbb{Z}$ be
quasi-Banach spaces and $T\in\mathcal{L}\left(\mathbb{X},\mathbb{Y}\right)$.
A rule $s\,:\,T\rightarrow\left(s_{n}(T)\right)_{n\in\mathbb{N}}$,
which assigns to every operator a scalar sequence is called an $s$
- scale if it satisfies the following conditions
\end{defn}

\begin{lyxlist}{00.00.0000}
\item [{(M$_{s}$)}] $\Vert T\Vert=s_{1}\left(T\right)\geq s_{2}\left(T\right)\geq\ldots\geq0$ 
\item [{(A$_{s}$)}] $s_{m+n-1}\left(T\right)\leq C_{\mathbb{Y}}\left(s_{m}\left(T\right)+s_{n}\left(T\right)\right)$
where $C_{\mathbb{Y}}$ is the constant of the quasi-Banach space
$\mathbb{Y}$ or likewise $s_{m+n-1}\left(T\right)^{\varrho}\leq s_{m}\left(T\right)^{\varrho}+s_{n}\left(T\right)^{\varrho}$,
for an equivalent $\varrho$ - norm with $\varrho\in(0,1]$. $S\in\mathcal{L}\left(\mathbb{X},\mathbb{Y}\right)$
and $n,m\in\mathbb{N}$.
\item [{(S$_{s}$)}] $s_{n}\left(RTU\right)\leq\Vert R\Vert s_{n}\left(T\right)\Vert U\Vert$
for all $U\in\mathcal{L}\left(\mathbb{W},\mathbb{X}\right),$ $T\in\mathcal{L}\left(\mathbb{X},\mathbb{Y}\right)$,
$R\in\mathcal{L}\left(\mathbb{Y},\mathbb{Z}\right)$ and $n\in\mathbb{N}$.
\item [{(R$_{s}$)}] $\mbox{rank }T<n\,\Longrightarrow s_{n}\left(T\right)=0$
\item [{(I$_{s}$)}] $s_{n}\left(\mbox{id : }\ell_{2}^{n}\rightarrow\ell_{2}^{n}\right)=1$
\end{lyxlist}
Furthermore an $s$ - scale is said to be multiplicative if it also
satisfies 
\begin{lyxlist}{00.00.0000}
\item [{(P$_{s}$)}] $s_{m+n-1}\left(RT\right)\leq s_{m}\left(R\right)s_{n}\left(T\right)$
for every $R\in\mathcal{L}\left(\mathbb{Y},\mathbb{Z}\right)$ and
$m,n\in\mathbb{N}$.
\end{lyxlist}
\begin{rem}
At first we see, that entropy numbers do not fit in this construct,
since they do not satisfy (R$_{s}$) as we can see in Lemma \ref{lem:Estimation unproved}. 

As stated above, this is another approach to the theory of approximation-
and Kolmogorov numbers. We can easily make sure, that these numbers
are indeed $s$ - scales, since we have already shown all of the necessary
properties. (Theorem \ref{thm:Properties a} and Theorem \ref{thm:Properties d}) 

Although they will not be a part of this bachelor's thesis we denote
that there are many other concepts of $s$ - scales, as for example 
\end{rem}

\begin{lyxlist}{00.00.0000}
\item [{(i)}] Gelfand numbers : $c_{n}\left(T\right):=\inf\left\{ \Vert TS_{V}^{\mathbb{X}}\Vert\,:\,\mbox{codim }V<n\right\} $,
where $V$ is a subspace of the Banach space $\mathbb{X}$.
\item [{(ii)}] Weyl numbers : $x_{n}\left(T\right):=\sup\left\{ a_{n}\left(TS\right):\,S\in\mathcal{L}\left(\ell_{2},\mathbb{X}\right),\,\Vert S\Vert\leq1\right\} $
\end{lyxlist}
The proof that these numbers are $s$ - scales indeed, can be found
in \cite[Sect. 2.4. Thm. 2.4.3.*, Thm. 2.4.14.*]{Albrecht Pietsch}
and is left out here, since it would go beyond the intended scope
of this bachelor's thesis.

\part{Compact Embeddings}

\section{$\mbox{id : }\ell_{p}^{n}\rightarrow\ell_{q}^{n}$ and e\noun{ntr}opy
numbers}

In this part we will only deal with one specific operator, which is
$T:=\mbox{id }\left(\ell_{p}^{n}\rightarrow\ell_{q}^{n}\right)$ for
given $0<p,q\leq\infty$ and $n\in\mathbb{N}$. For reasons of abbreviation
we denote $e_{k}:=e_{k}\left(T\right)$, $a_{k}:=a_{k}\left(T\right)$
and $d_{k}=d_{k}\left(T\right)$ for the whole following part. But
before we investigate the embeddings, we need to have a look at the
following proposition, taken from \cite[Subsect. 3.2.1., Prop.]{Edmunds and Triebel}.
Since we introduced the sequence spaces $\ell_{p}$ over $\mathbb{R}$
or $\mathbb{C}$, it is not clear whether we deal with real or complex
elements, but $\mathbb{C}^{n}$ may be identified with $\mathbb{R}^{2n}$.
Using this interpretation we make the convention, that with the volume
$\mbox{vol}\overline{B}_{\ell_{p}^{n}}$, we mean the Lebesgue-$2n$-measure
of $\left\{ x\in\mathbb{R}^{2n}:\,\sum_{j=1}^{n}\left(x_{2j-1}^{2}+x_{2j}^{2}\right)^{\frac{p}{2}}\leq1\right\} $
to cover both cases. Our aim here, is to identify $x_{2j-1}$ and
$x_{2j}$ with real and imaginary part of a complex number, but we
reduce it to two real values. 
\begin{prop}
\label{pro:Ballvolume}Let $n\in\mathbb{N}$, then
\end{prop}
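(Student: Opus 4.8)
The plan is to compute the stated volume directly, treating $\overline{B}_{\ell_p^n}$ as the subset $\{x\in\mathbb{R}^{2n}:\sum_{j=1}^n(x_{2j-1}^2+x_{2j}^2)^{p/2}\le 1\}$ in accordance with the convention fixed above, and reducing the $2n$-dimensional Lebesgue integral to a one-variable Dirichlet/Beta computation. First I would introduce polar coordinates in each of the $n$ coordinate planes, writing $(x_{2j-1},x_{2j})=(r_j\cos\theta_j,r_j\sin\theta_j)$ with $r_j\ge 0$ and $\theta_j\in[0,2\pi)$. The Jacobian contributes a factor $r_j$ for each $j$, the $n$ angular variables integrate out to $(2\pi)^n$, and the defining inequality becomes $\sum_{j=1}^n r_j^p\le 1$. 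Hence $\mathrm{vol}\,\overline{B}_{\ell_p^n}=(2\pi)^n\int_{R}\prod_{j=1}^n r_j\,dr$, where $R=\{r\in[0,\infty)^n:\sum_j r_j^p\le 1\}$.

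Next I would substitute $u_j=r_j^p$ in each variable, so that $r_j=u_j^{1/p}$ and $r_j\,dr_j=\tfrac{1}{p}u_j^{2/p-1}\,du_j$. This turns $R$ into the standard simplex $\Sigma=\{u\in[0,\infty)^n:\sum_j u_j\le 1\}$ and reduces everything to the Dirichlet integral $\int_{\Sigma}\prod_{j=1}^n u_j^{2/p-1}\,du$, up to the prefactor $(2\pi)^n p^{-n}$.

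The main work is evaluating this Dirichlet integral, for which I would prove the general identity $\int_{\Sigma}\prod_{j=1}^n u_j^{a_j-1}\,du=\Gamma(a_1)\cdots\Gamma(a_n)/\Gamma(1+\sum_j a_j)$ by induction on $n$. The key auxiliary observation is the scaling relation $I_{n-1}(t)=t^{\,a_1+\cdots+a_{n-1}}I_{n-1}(1)$, where $I_{n-1}(t)$ denotes the same integral over the dilated simplex $\{\sum_{j<n}u_j\le t\}$; with this in hand, integrating out $u_n$ first leaves a Beta integral $\int_0^1 u_n^{a_n-1}(1-u_n)^{a_1+\cdots+a_{n-1}}\,du_n=\Gamma(a_n)\Gamma(1+\sum_{j<n}a_j)/\Gamma(1+\sum_{j\le n}a_j)$, which closes the induction cleanly. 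Specializing to $a_j=2/p$ yields $\Gamma(2/p)^n/\Gamma(1+2n/p)$. This induction, together with the Gamma-function bookkeeping, is the only nontrivial part of the argument; the obstacle is minor and amounts to tracking the rescaling correctly at each step.

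Finally I would collect the constants and simplify with the functional equation $\Gamma(1+s)=s\,\Gamma(s)$: from $(2\pi)^n p^{-n}\Gamma(2/p)^n/\Gamma(1+2n/p)$, using $\Gamma(2/p)/p=\tfrac{1}{2}\Gamma(1+2/p)$ gives the closed form $\pi^n\,\Gamma(1+2/p)^n/\Gamma(1+2n/p)$. The real case is entirely analogous: one replaces the polar substitution by the decomposition into the $2^n$ sign-octants (yielding a factor $2^n$ instead of $(2\pi)^n$) and the exponent $2/p-1$ by $1/p-1$, and the same Dirichlet identity then produces $\bigl(2\,\Gamma(1+1/p)\bigr)^n/\Gamma(1+n/p)$.
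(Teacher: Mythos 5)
Your computation of part (i) is correct: the reduction to polar coordinates in each of the $n$ coordinate planes, the substitution $u_j=r_j^p$, the Dirichlet-integral identity $\int_{\Sigma}\prod_j u_j^{a_j-1}\,du=\prod_j\Gamma(a_j)/\Gamma\left(1+\sum_j a_j\right)$ proved by induction via the scaling relation, and the final simplification $\Gamma(2/p)/p=\tfrac12\Gamma(1+2/p)$ all check out and yield $\pi^n\,\Gamma(1+2/p)^n/\Gamma(1+2n/p)$. Note that this is genuinely more than the paper does: the thesis gives no proof at all, stating that the result is needed "only for a matter of constants" and citing Edmunds--Triebel, so your self-contained derivation is a different (and more informative) route for that part. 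Two minor remarks: your substitution $u_j=r_j^p$ presupposes $p<\infty$, so the endpoint case $p=\infty$ of (i) needs the separate (trivial) observation that the ball is then a polydisc of volume $\pi^n$; and the real-case formula you append is not needed under the paper's convention, which defines the volume via the identification with $\mathbb{R}^{2n}$ precisely to cover both scalar fields at once.

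There is, however, a genuine gap: the proposition has a second part, and your proposal does not address it. Part (ii) asserts the existence of a function $\theta:(0,\infty)\to\mathbb{R}$ with $0<\theta(x)<\tfrac{1}{12}$ such that
\[
\mbox{vol}\,\overline{B}_{\ell_{p}^{n}}=2^{n-1}\pi^{\frac{1}{2}\left(3n-1\right)}p^{-\frac{n-1}{2}}n^{-\frac{2n}{p}-\frac{1}{2}}\exp\left(n\theta\left(2/p\right)p/2-\theta\left(2n/p\right)p/2n\right),
\]
and this does not follow from the algebra you carried out. It requires Stirling's formula with an explicit remainder bound, $\Gamma(1+x)=\sqrt{2\pi x}\,(x/e)^{x}e^{\theta(x)/x}$ with $0<\theta(x)<\tfrac{1}{12}$, applied to each factor $\Gamma(1+2/p)$ and to $\Gamma(1+2n/p)$ in the identity from (i), after which the powers of $2$, $\pi$, $p$ and $n$ must be collected (the exponent $\pi^{(3n-1)/2}$ arises as $\pi^{n}\cdot\pi^{(n-1)/2}$). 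This is routine given (i), but it is a distinct step with its own quantitative input, and a complete proof of the proposition must include it.
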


\begin{lyxlist}{00.00.0000}
\item [{(i)}] If $0<p\leq\infty$ , then the volume of the unit ball in
$\ell_{p}^{n}$ is $\mbox{vol}\overline{B}_{\ell_{p}^{n}}=\pi^{n}\frac{\Gamma\left(1+\frac{2}{p}\right)^{n}}{\Gamma\left(1+\frac{2n}{p}\right)}$
\item [{(ii)}] There exists a function $\theta\,:\,\left(0,\infty\right)\rightarrow\mathbb{R}$,
with $0<\theta\left(x\right)<\frac{1}{12}$ for all $x>0$, such that
for all $p\in\left(0,\infty\right)$
\item [{~~~}] 
\[
\mbox{vol}\overline{B}_{\ell_{p}^{n}}=2^{n-1}\pi^{\frac{1}{2}\left(3n-1\right)}p^{\frac{-\left(n-1\right)}{2}}n^{-\frac{2n}{p}-\frac{1}{2}}\exp\left(n\theta\left(2/p\right)p/2-\theta\left(2n/p\right)p/2n\right).
\]
\end{lyxlist}
\begin{proof}
Since this statement is not crucial for the topic of this bachelor's
thesis and will be needed later on only for a matter of constants,
it will be used without proof. Nevertheless the proof can be found
in \cite[Subsect. 3.2.1., Prop.]{Edmunds and Triebel}.
\end{proof}
The following proposition is taken from \cite[Subsect. 3.2.2., Prop.]{Edmunds and Triebel}.
\begin{prop}
\label{pro:Edmunds Triebel Proposition}($e_{k}$ of $\mbox{id : }\ell_{p}^{n}\longrightarrow\ell_{q}^{n}$,
upper estimate)

Let $0<p\leq q\leq\infty$ then

\begin{equation}
e_{k}\leq c_{p,q}\cdot\begin{cases}
1 & \mbox{if }1\leq k\leq\log_{2}\left(2n\right)\\
\left(k^{-1}\log_{2}\left(1+\frac{2n}{k}\right)\right)^{\frac{1}{p}-\frac{1}{q}} & \mbox{if }\log_{2}\left(2n\right)\leq k\leq2n\\
2^{-\frac{k}{2n}}\left(2n\right)^{\frac{1}{q}-\frac{1}{p}} & \mbox{if }k\geq2n
\end{cases}\label{eq:TheoremEmbeddingE}
\end{equation}

where $c_{p,q}>0$ is a constant independent of $n$ and $k$.
\end{prop}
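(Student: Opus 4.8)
The plan is to split the range of $k$ into the three stated regimes and prove each separately, after first using Theorem \ref{thm:Normequivalence} to replace the quasi-norm of $\mathbb{Y}=\ell_q^n$ by an equivalent $\varrho$-norm, so that all triangle-type estimates below cost only a fixed power and all constants can be absorbed into $c_{p,q}$. Throughout I use that for $0<p\le q\le\infty$ one has $\|x\|_q\le\|x\|_p$, hence $\|\mathrm{id}\colon\ell_p^n\to\ell_q^n\|=1$ and $\overline{B}_{\ell_p^n}\subseteq\overline{B}_{\ell_q^n}$. Following the convention preceding Proposition \ref{pro:Ballvolume} I identify the ambient space with $\mathbb{R}^{2n}$, set $m:=2n$, and work with Lebesgue volume on $\mathbb{R}^m$.

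The first range $1\le k\le\log_2(2n)$ is immediate: by the monotonicity (M$_{e}$) of Theorem \ref{thm:(Properties-of-e)} together with $\|\mathrm{id}\|=1$ we get $e_k\le e_1\le\|\mathrm{id}\|=1$, which is the asserted bound. For the third range $k\ge 2n$ I would run the standard volumetric covering argument. Fix $\varepsilon>0$ and choose a maximal subset $\{x_1,\dots,x_N\}$ of $\overline{B}_{\ell_p^n}$ that is $\varepsilon$-separated in $\ell_q^n$; maximality forces $\overline{B}_{\ell_p^n}\subseteq\bigcup_i\{x_i+\varepsilon\overline{B}_{\ell_q^n}\}$, while separation makes the sets $x_i+c\varepsilon\overline{B}_{\ell_q^n}$ pairwise disjoint for a suitable $c$ depending only on $C_{\mathbb{Y}}$. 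Comparing volumes gives
\[
N\,(c\varepsilon)^{m}\,\mathrm{vol}\,\overline{B}_{\ell_q^n}\;\le\;\mathrm{vol}\Bigl(\overline{B}_{\ell_p^n}+c\varepsilon\,\overline{B}_{\ell_q^n}\Bigr).
\]
For the target size of $\varepsilon$ one checks $c\varepsilon\,\overline{B}_{\ell_q^n}\subseteq\overline{B}_{\ell_p^n}$ (valid once $\varepsilon\lesssim n^{1/q-1/p}$, which the chosen $\varepsilon$ satisfies since $2^{-k/2n}<1$ in this range), so the right-hand side is at most $\mathrm{vol}(c_p\,\overline{B}_{\ell_p^n})=c_p^{\,m}\,\mathrm{vol}\,\overline{B}_{\ell_p^n}$, using $\overline{B}_{\ell_p^n}+\overline{B}_{\ell_p^n}\subseteq c_p\,\overline{B}_{\ell_p^n}$ from the $\varrho$-norm. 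Solving $N\le 2^{k-1}$ for $\varepsilon$ yields
\[
e_k\;\le\;c\cdot 2^{-(k-1)/m}\Bigl(\tfrac{\mathrm{vol}\,\overline{B}_{\ell_p^n}}{\mathrm{vol}\,\overline{B}_{\ell_q^n}}\Bigr)^{1/m},
\]
and inserting the explicit volumes from Proposition \ref{pro:Ballvolume}(i) together with Stirling's asymptotics gives $(\mathrm{vol}\,\overline{B}_{\ell_p^n}/\mathrm{vol}\,\overline{B}_{\ell_q^n})^{1/m}\le c_{p,q}(2n)^{1/q-1/p}$, producing exactly the claimed bound $c_{p,q}\,2^{-k/2n}(2n)^{1/q-1/p}$.

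The middle range $\log_2(2n)\le k\le 2n$ is the crux and the main obstacle, since the volume argument is too wasteful there and one must exploit the combinatorial structure of $\overline{B}_{\ell_p^n}$. The plan is to construct an explicit $\varepsilon$-net in $\ell_q^n$ out of sparse, quantized vectors. Given $x$ with $\|x\|_p\le1$, Markov's inequality bounds the number of coordinates exceeding a threshold $\delta$ by $\delta^{-p}$, so $x$ is approximable by a vector supported on some $s\approx\delta^{-p}$ coordinates whose entries are quantized to finitely many levels; because $q\ge p$, the discarded small coordinates satisfy $\|x_{\mathrm{small}}\|_q\le\delta^{1-p/q}\approx s^{-(1/p-1/q)}$ and contribute little. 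The number of centres is governed by the choice of support, $\binom{n}{s}\approx 2^{s\log_2(en/s)}$, times a bounded number of quantization levels per coordinate, so $\log_2(\#\text{centres})\approx s\log_2(1+n/s)$. Equating this to $k-1$ couples $s$ to $k$ via $s\log_2(1+n/s)\sim k$, i.e.\ $s\approx k/\log_2(1+2n/k)$, and then $\varepsilon\approx s^{-(1/p-1/q)}$ gives precisely the factor $(k^{-1}\log_2(1+2n/k))^{1/p-1/q}$. Carrying out this accounting rigorously — fixing $\delta$ and the quantization so that the large-coordinate and small-coordinate errors are simultaneously of order $\varepsilon$, and verifying the cardinality count in each subcase of how $s$ compares with $k$ and $n$ — is the technical heart of the argument; the rest is routine bookkeeping.

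Finally I would verify that the three estimates agree, up to constants, at the transition points $k=\log_2(2n)$ and $k=2n$ (at $k=2n$ the middle bound gives $(2n)^{1/q-1/p}$ and the tail bound $\tfrac12(2n)^{1/q-1/p}$; at $k=\log_2(2n)$ the middle bound is $\approx 1$), so that all pieces can be absorbed into one constant $c_{p,q}$ independent of $n$ and $k$. The real/complex distinction has already been folded into the choice $m=2n$ by the convention preceding Proposition \ref{pro:Ballvolume}, so no separate treatment is required.
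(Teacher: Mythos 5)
Your treatment of the two outer ranges is sound and is essentially the paper's own argument: $e_{k}\leq\Vert\mbox{id}\Vert=1$ disposes of $1\leq k\leq\log_{2}\left(2n\right)$, and the maximal-separated-set/volume-comparison argument combined with Proposition \ref{pro:Ballvolume} gives $c\cdot2^{-k/2n}\left(2n\right)^{1/q-1/p}$ (the paper does this in its first two steps, obtaining an index shift $e_{k+cn}\leq r$ rather than your constant-factor formulation, but the substance is identical). In the middle range you genuinely depart from the paper: the paper first proves the case $q=\infty$ by the sparsity argument (at most $n_{\sigma}\leq\sigma^{-p}$ coordinates can exceed $\sigma$, a factor ${n \choose n_{\sigma}}$ for the choice of support, and then the \emph{first-step covering result applied in dimension} $n_{\sigma}$), and then gets $0<p<q<\infty$ by interpolation, $e_{k}\left(\mbox{id : }\ell_{p}^{n}\rightarrow\ell_{q}^{n}\right)\leq c\,e_{k}^{\theta}\left(\mbox{id : }\ell_{p}^{n}\rightarrow\ell_{\infty}^{n}\right)$ with $\frac{\theta}{p}=\frac{1}{p}-\frac{1}{q}$. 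Running the sparse covering directly for all $q$ is a legitimate alternative that avoids the interpolation theorem, but as written your version has a genuine gap.

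The gap is the cardinality count: ``a bounded number of quantization levels per coordinate'' cannot deliver the accuracy you need. Having fixed a support $S$ with $\#S=s$, you must cover $\left\{ x:\Vert x\Vert_{p}\leq1,\,\mbox{supp}\,x\subseteq S\right\} $ in $\ell_{q}^{s}$ to accuracy $\varepsilon\approx s^{-\left(1/p-1/q\right)}$. A coordinatewise grid of mesh $\eta$ incurs an $\ell_{q}$-error of order $s^{1/q}\eta$ (and of order $\eta$ when $q=\infty$), which forces $\eta\approx s^{-1/p}$, hence about $s^{1/p}$ levels per coordinate and $2^{\left(s/p\right)\log_{2}s}$ centres --- not $2^{O\left(s\right)}$. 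At the upper end of the middle range, where $k\sim n$ and hence $s\sim n$, this count is $2^{cn\log_{2}n}$, far beyond the budget $2^{k-1}=2^{O\left(n\right)}$; re-optimizing $s$ against this larger count loses a logarithmic factor, so the bound actually proved would be strictly weaker than \eqref{eq:TheoremEmbeddingE}. The repair is precisely the ingredient the paper uses in its third step: do not quantize coordinate by coordinate, but cover the $s$-dimensional section by invoking the volumetric (large-index) estimate \emph{in dimension} $s$ with index proportional to $s$, which gives $2^{O\left(s\right)}$ balls of $\ell_{q}^{s}$-radius $O\left(s^{1/q-1/p}\right)$. With that substitution your accounting $\log_{2}\left(\#\mbox{centres}\right)\lesssim s\log_{2}\left(1+n/s\right)$, and hence the coupling $s\approx k/\log_{2}\left(1+2n/k\right)$, becomes correct, and the rest of your sketch (including the discarded small coordinates, which are indeed harmless since $\Vert x_{\mathrm{small}}\Vert_{q}\leq\delta^{1-p/q}$) goes through.
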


\begin{proof}
First of all, for a matter of abbreviation we introduce the notation
$\overline{B}_{p}^{n}:=\overline{B}_{\ell_{p}^{n}}$. To proof the
whole estimate, we are going to need four steps. We begin with the
\emph{first step}, which deals with large $k\geq2n$ and $0<p\leq q\leq1.$
We set $r=2^{-\frac{k}{2n}}\left(2n\right)^{\frac{1}{q}-\frac{1}{p}}$
and furthermore $K=K(r)$ be the maximal number of points $y^{j}\in\overline{B}_{p}^{n}$
with $\Vert y^{j}-y^{m}\Vert_{q}>r$ if $j\neq m$. Now for given
$z\in\overline{B}_{q}^{n}$ and by choice of $r$, we can show that
for $p\leq1$

\begin{eqnarray}
\Vert y^{j}+rz\Vert_{\ell_{p}^{n}}^{p} & \leq & 1+r^{p}\Vert z\Vert_{\ell_{p}^{n}}^{p}\nonumber \\
 & \leq & 1+r^{p}\Vert z\Vert_{\ell_{q}^{n}}^{p}n^{p\left(\frac{1}{p}-\frac{1}{q}\right)}\leq2,\label{eq:balls}
\end{eqnarray}

where the second estimate is obtained through Hölder's inequality. 

Now let $\left\{ y^{j}:\,j=1,\ldots,K\right\} $ be such a set, which
is maximal in the above sense. Then clearly we get

\begin{equation}
\overline{B}_{p}^{n}\subset\bigcup_{j=1}^{K}\left\{ y^{j}+r\overline{B}_{q}^{n}\right\} \overset{\eqref{eq:balls}}{\subset}2^{\frac{1}{p}}\overline{B}_{p}^{n}.\label{eq:Ballinclusion}
\end{equation}

If we have a look at the balls $y^{j}+2^{-\frac{1}{q}}r\overline{B}_{q}^{n}$
for $j=1,\ldots,K$ and assume, that there exists an element $z$
which is in two of these balls, then 

\begin{equation}
\Vert y^{j}-y^{m}\Vert_{\ell_{q}^{n}}^{q}\leq\underset{\tiny\leq2^{-1}r^{q}}{\underbrace{\Vert y^{j}-z\Vert_{\ell_{q}^{n}}^{q}}}+\underset{\tiny\leq2^{-1}r^{q}}{\underbrace{\Vert y^{m}-z\Vert_{\ell_{q}^{n}}^{q}}}\leq r^{q},\,\,\mbox{for }q\leq1,\label{eq:triangle2}
\end{equation}

which is by choice of the $y^{j}$ only possible, if $m=j$, hence
the balls are disjoint. Together with \eqref{eq:Ballinclusion} this
yields

\[
K2^{\frac{-2n}{q}}r^{2n}\mbox{vol}\overline{B}_{q}^{n}\leq2^{\frac{2n}{p}}\mbox{vol}\overline{B}_{p}^{n},
\]

where the constants arise, because of our convention of identifying
$\mathbb{C}^{n}$ with $\mathbb{R}^{2n}$. With Proposition \ref{pro:Ballvolume}
(ii) we see that for some positive constant $c=c(p,q)>0,$ independent
of $k,n$ 

\[
\mbox{vol}\overline{B}_{p}^{n}\leq c^{2n}\left(2n\right)^{-2n\left(\frac{1}{p}-\frac{1}{q}\right)}\mbox{vol}\overline{B}_{q}^{n}
\]

where we again used Hölder's inequality. Combining this estimate with
the preceding one and our choice of $r$ yields 

\[
K\leq2^{k+cn}
\]

and hence

\begin{equation}
e_{k+cn}\leq r=2^{\frac{-k}{2n}}\left(2n\right)^{\frac{1}{q}-\frac{1}{p}}\,\,\mbox{if }k\geq2n,\label{eq:step1}
\end{equation}

by definition of $e_{k}$. Since $k+cn$ does not necessarily has
to be a natural number, we will from now on use the notation $e_{\lambda}=e_{\left\lfloor \lambda\right\rfloor +1}$
if $\lambda\geq1$, where $\left\lfloor \lambda\right\rfloor $ denotes
the smallest integer bigger than $\lambda$. 

\[
e_{\underset{\tilde{k}}{\underbrace{k+cn}}}=e_{\tilde{k}}\leq2^{\frac{-(\tilde{k}-cn)}{2n}}\left(2n\right)^{\frac{1}{q}-\frac{1}{p}}=2^{\frac{c}{2}}\cdot2^{\frac{-\tilde{k}}{2n}}\left(2n\right)^{\frac{1}{q}-\frac{1}{p}}
\]

Thus if $k\geq c_{1}n$ for $c_{1}>1$ independent of $n$ and $k$,
we have proved \eqref{eq:TheoremEmbeddingE} for $0<p\leq q\leq1$.

Our \emph{second step} will be only a modification of the first one.
We still assume that $k\geq2n$ is large and notice, that the argument
above holds for all $0<p\leq q\leq\infty$, since we only used properties
of the $p$ - and accordingly the $q$-norms, thus we only need to
alter these points by using the triangle inequality in \eqref{eq:balls}
and \eqref{eq:triangle2}. The rest of the proof proceeds analogously.
In particular, we consider the case $0<p=q\leq\infty$ . We know by
Theorem \ref{thm:(Properties-of-e)}, that $e_{k}\left(T\right)\leq\Vert T\Vert$
and since 

\[
\Vert\mbox{id : }\ell_{p}^{n}\rightarrow\ell_{q}^{n}\Vert=\sup_{\Vert x\Vert_{\ell_{p}^{n}}\leq1}\Vert x\Vert_{\ell_{q}^{n}}\leq\sup_{\Vert x\Vert_{\ell_{p}^{n}}\leq1}\underset{\tiny=1}{\underbrace{n^{p\left(\frac{1}{p}-\frac{1}{q}\right)}}}\Vert x\Vert_{\ell_{p}^{n}}\leq1,
\]

hence $e_{k}\leq1$ for all $k\in\mathbb{N}$. This proves the statement
for $p=q$ for mid-ranged and small $k$.

We proceed with the \emph{third step}, which covers the case $0<p<q=\infty$
and $1\leq k\leq c_{1}n$. Here $c_{1}$ has the same meaning as before.
We choose a second constant $c_{2}>\left(\frac{1}{c_{1}}\log_{2}\left(1+\frac{1}{c_{1}}\right)\right)^{-1/p}$
and set

\begin{equation}
\sigma:=c_{2}\left(k^{-1}\log_{2}\left(\frac{n}{k}+1\right)\right)^{1/p}=c_{2}n^{-1/p}\left(\frac{n}{k}\log_{2}\left(\frac{n}{k}+1\right)\right)^{1/p}>n^{-\frac{1}{p}},\label{eq:embeddingestimation}
\end{equation}

where the last estimation occurs by choice of $c_{2}$. We define
$n_{\sigma}$ as the maximal number of components $y_{n}$, which
a point $y=\left(y_{1},y_{2},\ldots,y_{n}\right)\in\overline{B}_{p}^{n}$
may have, for which $\vert y_{n}\vert>\sigma$. By the preceding estimate
\eqref{eq:embeddingestimation}, we have $n_{\sigma}<n$, otherwise
$y\notin\overline{B}_{p}^{n}$. Furthermore we get $n_{\sigma}\sigma^{p}\leq1$,
hence $n_{\sigma}\leq\sigma^{-p}$. Let us now assume that $\sigma^{-p}\in\mathbb{N}$
and $n_{\sigma}\sigma^{p}=1$. This is possible, because we can always
find such a number $\sigma$ which satisfies the above conditions.
We set

\[
e_{k}^{\left(\sigma\right)}:=e_{k}\left(\mbox{id}:\ell_{p}^{n_{\sigma}}\rightarrow\ell_{\infty}^{n_{\sigma}}\right)
\]

and with \eqref{eq:step1} from the first step, where we set $k=c_{1}\sigma^{-p}$,
we know that 

\begin{equation}
e_{c_{1}\sigma^{-p}}^{\left(\sigma\right)}\leq c_{3}n_{\sigma}^{-1/p}=c_{3}\sigma,\label{eq:entropyestimation}
\end{equation}

for $c_{1}\geq1$ and $c_{3}\geq1$. This estimate means, that we
need $2^{c_{1}\sigma^{-p}}$ balls in $\ell_{\infty}^{n}$ with radius
$c_{3}\sigma$ to cover $\overline{B}_{p}^{n_{\sigma}}$. But since
we want to cover the whole $\overline{B}_{p}^{n}$, we need to know
in how many ways we can select $n_{\sigma}$ coordinates out of $n$.
The number of possibilities is given through ${n \choose n_{\sigma}}$
and therefore we know, that we need $2^{c_{1}\sigma^{-p}}{n \choose n_{\sigma}}$
balls in $\ell_{\infty}^{n}$ with radius $c_{3}\sigma$ to cover
$\overline{B}_{p}^{n}$. To estimate further, we need the fact, that
for given natural numbers $N,K\in\mathbb{N}_{0}$, with $N\geq K$
we have an upper bound for the binomial coefficient through ${N \choose K}\leq\frac{N^{K}}{K!}$.
By using this and properties of the logarithm, we get

\begin{eqnarray*}
\log_{2}{n \choose n_{\sigma}} & \leq & \log_{2}\frac{n^{n_{\sigma}}}{n_{\sigma}!}=n_{\sigma}\log_{2}n-\sum_{j=1}^{n_{\sigma}}\log_{2}j\\
 & \leq & n_{\sigma}\log_{2}n-n_{\sigma}\log_{2}n_{\sigma}+cn_{\sigma}\\
 & \leq & c'n_{\sigma}\log_{2}\left(\frac{n}{n_{\sigma}}+1\right)
\end{eqnarray*}

where $c$ and $c'$ denote some positive constants. Combining this
estimate, with the above construction, we know, that we need 

\begin{equation}
2^{c_{4}\sigma^{-p}\log_{2}\left(\frac{n}{n_{\sigma}}+1\right)}=2^{c_{4}\sigma^{-p}\log_{2}\left(n\sigma^{p}+1\right)}\label{eq:Balls}
\end{equation}

balls in $\ell_{\infty}^{n}$ with radius $c_{3}\sigma$ to cover
$\overline{B}_{p}^{n}$, where $c_{4}>0$ is independent of $k$ and
$n$. Furthermore with \eqref{eq:embeddingestimation} and the logarithm
properties, we get 

\begin{eqnarray*}
\log_{2}\left(n\sigma^{p}+1\right) & = & \log_{2}\left(c_{2}^{p}\frac{n}{k}\log_{2}\left(\frac{n}{k}+1\right)+1\right)\\
 & \leq & c''\cdot\log_{2}\left(\frac{n}{k}+1\right)\left[\frac{\log_{2}c_{2}^{p}+\log_{2}\left(\frac{n}{k}+1\right)+\log_{2}\log_{2}\left(\frac{n}{k}+1\right)}{\log_{2}\left(\frac{n}{k}+1\right)}\right]\\
 & =c''\cdot & \log_{2}\left(\frac{n}{k}+1\right)\left[1+\underset{\leq c'''}{\underbrace{\frac{\log_{2}c_{2}^{p}+\log_{2}\log_{2}\left(\frac{n}{k}+1\right)}{\log_{2}\left(\frac{n}{k}+1\right)}}}\right]\\
 & \leq & \tilde{c}\cdot\log_{2}\left(\frac{n}{k}+1\right)
\end{eqnarray*}

for some constants $c''$, $c'''$ and $\tilde{c}$. Hence we can
estimate \eqref{eq:Balls} from above with $2^{c_{5}k}$ for a positive
constant $c_{5}\geq1$ independent of $n$ and $k$ by using \eqref{eq:embeddingestimation}
. Hence with \eqref{eq:entropyestimation} we finally get 

\begin{equation}
e_{c_{5}k}\leq c_{6}\frac{\sigma}{c_{2}}=c_{6}\left(\frac{1}{k}\log_{2}\left(\frac{n}{k}+1\right)\right)^{\frac{1}{p}}\,\,\,\mbox{if }1\leq k\leq c_{1}n,\label{eq:estimationxyz}
\end{equation}

where $c_{6}>0$ is also independent of $n$ and $k$. \eqref{eq:TheoremEmbeddingE}
follows, if we have in mind that $e_{k}\leq1$ for all $k\in\mathbb{N}$
always assuming that $0<p<\infty$ and $q=\infty$.

We now advance to the \emph{fourth step}, which deals with $0<p<q<\infty$
and $1\leq k\leq c_{1}n$, where $c_{1}$ has still the same meaning
as in the first step. We show \eqref{eq:TheoremEmbeddingE} by using
\cite[Subsect. 1.3.2., Thm. 1.]{Edmunds and Triebel}, with the following
cast and $\theta\in\left(0,1\right)$

\[
A=B_{0}=\ell_{p}^{n},\,B_{1}=\ell_{\infty}^{n}\,,\,B_{\theta}=\ell_{q}^{n},\,\,\mbox{where }\frac{1}{q}=\frac{\left(1-\theta\right)}{p}.
\]

Now if we consider the premises of the theorem, we have to check that
$\ell_{p}^{n}\cap\ell_{\infty}^{n}\subset\ell_{q}^{n}\subset\ell_{p}^{n}+\ell_{\infty}^{n}$
, which can be rewritten as $\ell_{p}^{n}\subset\ell_{q}^{n}\subset\ell_{\infty}^{n}$
since we have $\ell_{p}^{n}\cap\ell_{\infty}^{n}=\ell_{p}^{n}$ and
$\ell_{p}^{n}+\ell_{\infty}^{n}=\ell_{\infty}^{n}$. The statement
follows from the monotone alignment of the sequence spaces. 

We may now conclude, that

\begin{eqnarray*}
e_{k+1-1}\left(\mbox{id : }\ell_{p}^{n}\rightarrow\ell_{q}^{n}\right) & \leq & 2^{\frac{1}{p}}\underset{\leq1}{\underbrace{e_{1}^{1-\theta}\left(\mbox{id : }\ell_{p}^{n}\rightarrow\ell_{p}^{n}\right)}}e_{k}^{\theta}\left(\mbox{id : }\ell_{p}^{n}\rightarrow\ell_{\infty}^{n}\right)\\
 & \leq & 2^{\frac{1}{p}}e_{k}^{\theta}\left(\mbox{id : }\ell_{p}^{n}\rightarrow\ell_{\infty}^{n}\right).
\end{eqnarray*}

This yields $e_{k}\left(\mbox{id :\,}\ell_{p}^{n}\rightarrow\ell_{q}^{n}\right)\leq ce_{k}^{\theta}\left(\mbox{id }:\,\ell_{p}^{n}\rightarrow\ell_{\infty}^{n}\right)$
and we use \eqref{eq:estimationxyz} with $\frac{\theta}{p}=\frac{1}{p}-\frac{1}{q}$
. 

This, and the fact that $e_{k}\leq1$ $\forall k\in\mathbb{N}$ proves
the theorem.
\end{proof}
We have established an upper estimate for the $k$ - th entropy number
of the identity operator between the two spaces $\ell_{p}^{n}$ and
$\ell_{q}^{n}$. As we already know, this operator is compact, since
it maps between two finite dimensional quasi-Banach spaces. This can
also be seen if $k\rightarrow\infty$, as we have shown in the properties
of entropy numbers. We will now go on by giving a lower estimate for
both large and small $k$ by the following theorem, taken from \cite[Sect. 7, Prop. 7.2, Thm. 7.3]{Triebel1997}.
We will deal with mid-ranged $k$ later on.
\begin{prop}
\label{pro:Triebel Proposition}($e_{k}$ of $\mbox{id : }\ell_{p}^{n}\longrightarrow\ell_{q}^{n}$,
lower estimate I)

Let $0<p\leq q\leq\infty$ and $k\in\mathbb{N}$ then

\[
e_{k}\geq c\cdot\begin{cases}
1 & \mbox{if }1\leq k\leq\log_{2}\left(2n\right)\\
2^{-\frac{k}{2n}}\left(2n\right)^{\frac{1}{q}-\frac{1}{p}} & \mbox{if }k\in\mathbb{N}
\end{cases}
\]

for some positive constant $c$, which is independent of $k$ and
$n$ but may depend on $p$ and $q$.
\end{prop}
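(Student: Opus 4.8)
The plan is to treat the two regimes separately, in both cases exploiting that $T(\overline{B}_{\ell_p^n})=\overline{B}_{\ell_p^n}$, viewed inside $\ell_q^n$, cannot be covered by too few small balls. Throughout I abbreviate $\overline{B}_p^n:=\overline{B}_{\ell_p^n}$ and, via the convention $\mathbb{C}^n\cong\mathbb{R}^{2n}$, work with the Lebesgue $2n$-measure $\mbox{vol}$ fixed before Proposition \ref{pro:Ballvolume}. The bound for all $k\in\mathbb{N}$ comes from a volume comparison. If $\varepsilon>e_k$, the definition of $e_k$ gives $2^{k-1}$ points $y_1,\dots,y_{2^{k-1}}\in\ell_q^n$ with $\overline{B}_p^n\subseteq\bigcup_i\{y_i+\varepsilon\overline{B}_q^n\}$. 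Since each translate of $\varepsilon\overline{B}_q^n$ has volume $\varepsilon^{2n}\,\mbox{vol}\,\overline{B}_q^n$, taking volumes gives $\mbox{vol}\,\overline{B}_p^n\le 2^{k-1}\varepsilon^{2n}\,\mbox{vol}\,\overline{B}_q^n$; letting $\varepsilon\downarrow e_k$ and solving yields
\[
e_k\ge 2^{-(k-1)/(2n)}\left(\frac{\mbox{vol}\,\overline{B}_p^n}{\mbox{vol}\,\overline{B}_q^n}\right)^{1/(2n)}.
\]

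It then remains to show $\left(\mbox{vol}\,\overline{B}_p^n/\mbox{vol}\,\overline{B}_q^n\right)^{1/(2n)}\ge c_{p,q}(2n)^{1/q-1/p}$, which I would read off Proposition \ref{pro:Ballvolume}. For $p,q<\infty$ the representation (ii) makes the factors $2^{n-1}\pi^{(3n-1)/2}$ cancel in the quotient; the dominant factor $n^{-2n/p}/n^{-2n/q}=n^{2n(1/q-1/p)}$ produces $n^{1/q-1/p}$ after the $2n$-th root, while the surviving factors $(q/p)^{(n-1)/2}$, the $n^{-1/2}$ terms, and the exponential corrections (bounded because $0<\theta<\tfrac1{12}$) together contribute only a factor bounded above and below by positive constants. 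For $q=\infty$ I would insert the exact value $\mbox{vol}\,\overline{B}_{\ell_\infty^n}=\pi^n$ from (i) and apply Stirling's formula to $\Gamma(1+2n/p)$, again obtaining the power $n^{-1/p}=n^{1/q-1/p}$. Absorbing the harmless constant $2^{1/q-1/p}$ turns $n^{1/q-1/p}$ into $(2n)^{1/q-1/p}$, and combining with $2^{-(k-1)/(2n)}\ge 2^{-k/(2n)}$ finishes the second case.

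For the regime $1\le k\le\log_2(2n)$ I would use a separation argument rather than volumes. Consider the $2n$ points $\pm e_1,\dots,\pm e_n\in\overline{B}_p^n$; any two distinct ones satisfy $\Vert e_i-e_j\Vert_q=2^{1/q}$ (and $=1$ when $q=\infty$). Passing to the equivalent $\varrho$-norm of $\ell_q^n$ with $\varrho=\min(q,1)$ (Theorem \ref{thm:Normequivalence}), two of these points lying in one ball $y+\varepsilon\overline{B}_q^n$ would force $2^{1/q}\le 2^{1/\varrho}\varepsilon$, so for $\varepsilon<2^{1/q-1/\varrho}$ they must occupy $2n$ distinct balls. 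But $k\le\log_2(2n)$ gives $2^{k-1}\le n<2n$, so a covering of $\overline{B}_p^n$ by $2^{k-1}$ such balls is impossible; hence $e_k\ge 2^{1/q-1/\varrho}=:c>0$, a constant depending only on $q$.

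I expect the main obstacle to be the estimate of the volume ratio: one must extract the exact power $n^{1/q-1/p}$ from Proposition \ref{pro:Ballvolume} and verify that every remaining $n$-dependent factor (the $(q/p)^{(n-1)/2}$ term, the $n^{-1/2}$ terms, and the exponential $\theta$-corrections) stays within fixed positive bounds after the $2n$-th root, together with handling the endpoint $q=\infty$ via part (i) and Stirling rather than part (ii). By contrast, the separation argument for small $k$ is elementary once the $\varrho$-norm of $\ell_q^n$ is used to control the quasi-triangle inequality.
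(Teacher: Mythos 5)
Your proposal is correct and takes essentially the same route as the paper: a pigeonhole/separation argument on the $2n$ points $\pm e_{j}$ for $1\leq k\leq\log_{2}\left(2n\right)$, and the volume comparison $\mbox{vol}\,\overline{B}_{\ell_{p}^{n}}\leq2^{k-1}\varepsilon^{2n}\,\mbox{vol}\,\overline{B}_{\ell_{q}^{n}}$ for general $k$, the only difference being that the paper quotes the asymptotics $\left(\mbox{vol}\,\overline{B}_{\ell_{p}^{n}}\right)^{1/(2n)}\sim n^{-1/p}$ from Triebel's book whereas you extract the same information from Proposition \ref{pro:Ballvolume}, which is equally legitimate (and slightly more self-contained). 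One cosmetic slip: for $q<1$ the antipodal pairs satisfy $\Vert e_{j}-\left(-e_{j}\right)\Vert_{q}=2<2^{1/q}$, so the correct separation constant is $\min\left\{ 2,2^{1/q}\right\} $ rather than $2^{1/q}$, which only changes the value of $c$ and not the argument.
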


\begin{proof}
In the \emph{first step} of this proof, we will show the first inequality.
Let $y\in\ell_{p}^{n}$ where all components are zero except for one,
which is either $1$ or $-1$. Then we know, that there exist $2n$
such elements in $\ell_{p}^{n}$, which also happen to belong to $\overline{B}_{\ell_{p}^{n}}$
and $\overline{B}_{\ell_{q}^{n}}$. Let us now assume that $y^{1}$
and $y^{2}$ are two such elements belonging to the same $\varepsilon$-
ball in $\ell_{q}^{n}$. That means 

\[
y^{1}\in\left\{ x+\varepsilon\overline{B}_{\ell_{q}^{n}}\right\} \,\,\,\mbox{and}\,\,\,y^{2}\in\left\{ x+\varepsilon\overline{B}_{\ell_{q}^{n}}\right\} \,\,\,\,\,\,\mbox{for some}\,x\in\ell_{q}^{n}.
\]

Since we want to cover the cases where $0<q<1$ and $1\leq q\leq\infty$,
let $\overline{q}=\min\left\{ 1,q\right\} $. Next we consider a constant
$c>0$, which is independent of $n$ and $q$, and satisfies

\[
c\leq\Vert y^{1}-y^{2}\Vert_{\ell_{q}^{n}}^{\overline{q}}\leq\Vert y^{1}-x\Vert_{\ell_{q}^{n}}^{\overline{q}}+\Vert x-y^{2}\Vert_{\ell_{q}^{n}}^{\overline{q}}\leq2\varepsilon^{\overline{q}}.
\]

The wanted estimate follows by definition of the entropy numbers (since
we unify all these $\varepsilon$- balls containing $2$ elements
and take the infimum over all such $\varepsilon$) the preceding estimate
and the fact that $k\leq\log_{2}2n$ implies $2^{k-1}<2n$. 

We proceed with our \emph{second step}, which deals with the second
inequality. We choose $\varepsilon>0$ such, that $\overline{B}_{\ell_{p}^{n}}$
is covered by $2^{k-1}$ balls in $\ell_{q}^{n}$ with radius $\varepsilon$.
With the convention that $\mathbb{C}^{n}$ is identified with $\mathbb{R}^{2n}$,
this yields for appropriate $\varepsilon$

\begin{eqnarray}
\mbox{vol}\overline{B}_{\ell_{p}^{n}} & \leq & 2^{k-1}\varepsilon^{2n}\mbox{vol}\overline{B}_{\ell_{q}^{n}}\nonumber \\
 & \leq & 2^{k}e_{k}^{2n}\mbox{vol}\overline{B}_{\ell_{q}^{n}}.\label{eq:ballvolumeestimation}
\end{eqnarray}

Now according to \cite[Sect. 7, 7.1]{Triebel1997} for $0<p\leq\infty$
there exist two positive constants $c_{1},c_{2}$ such that

\[
c_{1}n^{-\frac{1}{p}}\leq\left(\mbox{vol}\overline{B}_{\ell_{p}^{n}}\right)^{\frac{1}{2n}}\leq c_{2}n^{-\frac{1}{p}}.
\]

If we combine this with \eqref{eq:ballvolumeestimation}, we obtain
the desired inequality.
\end{proof}
Since there is only one estimate missing for the case of mid-ranged
$k$, we will now have a look at this case. Therefore we follow the
results of a paper by \cite{JournalKuehn}, which deals exactly with
this missing case. The results can almost directly be transferred,
except for the fact, that the sequence spaces $\ell_{p}$ are complex
in this bachelor's thesis.
\begin{lem}
\label{lem:K=0000FCehn Lemma}($e_{k}$ of $\mbox{id : }\ell_{p}^{n}\longrightarrow\ell_{q}^{n}$,
lower estimate II)

Let $0<p\leq q\leq\infty$. Then

\[
e_{k}\geq c\cdot\left(k^{-1}\log_{2}\left(1+\frac{2n}{k}\right)\right)^{\frac{1}{p}-\frac{1}{q}}
\]

for some positive constant $c$ which is independent of $n$ and $k$
but may depend on $p$ and $q$.
\end{lem}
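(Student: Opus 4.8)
The plan is to prove this lower bound by a packing (combinatorial) argument, which is the natural counterpart to the volume estimate used in Proposition~\ref{pro:Triebel Proposition}; throughout I work in the mid-range $\log_{2}(2n)\leq k\leq 2n$, where the right-hand side is $\leq1$ (for smaller $k$ the bound would exceed the trivial ceiling $e_{k}\leq\Vert T\Vert\leq1$). \textbf{First} I would record the packing--covering duality adapted to quasi-norms. Suppose one can find points $y^{1},\dots,y^{N}\in\overline{B}_{\ell_{p}^{n}}$ that are pairwise $\delta$-separated in the $\ell_{q}^{n}$-quasi-norm, i.e. $\Vert y^{i}-y^{j}\Vert_{\ell_{q}^{n}}\geq\delta$ for $i\neq j$, and suppose $N>2^{k-1}$. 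If $\overline{B}_{\ell_{p}^{n}}$ were covered in $\ell_{q}^{n}$ by $2^{k-1}$ balls of radius $\varepsilon$, the pigeonhole principle would place two of the $y^{i}$ in one ball, and the quasi-triangle inequality with the constant $C_{\ell_{q}^{n}}$ of $\ell_{q}^{n}$ would give $\delta\leq C_{\ell_{q}^{n}}\cdot2\varepsilon$. Taking the infimum over coverings yields
\[
e_{k}\geq\frac{\delta}{2\,C_{\ell_{q}^{n}}}.
\]
Thus the problem reduces to exhibiting a rich, well-separated subset of $\overline{B}_{\ell_{p}^{n}}$.

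\textbf{Second}, for a parameter $m\in\mathbb{N}$ (to be fixed) I would use scaled sign-indicator vectors: to a vector $v\in\{-1,0,1\}^{n}$ with exactly $m$ nonzero entries associate $x_{v}:=m^{-1/p}v$, so that $\Vert x_{v}\Vert_{\ell_{p}^{n}}=1$ and hence $x_{v}\in\overline{B}_{\ell_{p}^{n}}$. If $v,w$ are two such vectors differing in at least $m/2$ coordinates, then every differing coordinate contributes at least $m^{-1/p}$ to the difference, so $\Vert x_{v}-x_{w}\Vert_{\ell_{q}^{n}}^{q}\geq(m/2)\,m^{-q/p}$ and therefore
\[
\Vert x_{v}-x_{w}\Vert_{\ell_{q}^{n}}\geq c_{0}\,m^{-(1/p-1/q)}=:\delta,
\]
with the obvious reading for $q=\infty$. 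The number of admissible points is thus the size of a constant-weight code over the alphabet $\{-1,0,1\}$ with minimum Hamming distance $\geq m/2$.

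\textbf{Third}, the combinatorial core is a Gilbert--Varshamov estimate: a greedy selection from the $\binom{n}{m}2^{m}$ available vectors, each chosen vector forbidding only those inside a Hamming ball of radius $m/2$ (of cardinality at most, say, $2^{m}\binom{n}{\lfloor m/2\rfloor}$), produces a code of size $N$ with $\log_{2}N\geq c_{1}\,m\log_{2}(2n/m)$ for $m\leq n/2$ and an absolute constant $c_{1}>0$. I would then calibrate $m$ as a suitable constant multiple of $k/\log_{2}(1+2n/k)$. In the lower part of the mid-range this choice satisfies $m\leq n/2$ and, since $\log_{2}(2n/m)$ is then comparable to $\log_{2}(1+2n/k)$, gives $\log_{2}N\geq c_{1}m\log_{2}(2n/m)\geq k$, hence $N>2^{k-1}$; at the same time $\delta=c_{0}m^{-(1/p-1/q)}\geq c\,(k^{-1}\log_{2}(1+2n/k))^{1/p-1/q}$, and the first step gives the claim there. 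For $k$ close to $2n$, where the code saturates (a valid $m$ would exceed $\sim n$), I would instead invoke the already-proved volume bound $e_{k}\geq c\,2^{-k/2n}(2n)^{1/q-1/p}$ of Proposition~\ref{pro:Triebel Proposition}: on this top part of the range the factors $2^{-k/2n}$ and $(k/(2n\log_{2}(1+2n/k)))^{1/p-1/q}$ are bounded between positive constants, so $2^{-k/2n}(2n)^{1/q-1/p}$ is already of the same order as $(k^{-1}\log_{2}(1+2n/k))^{1/p-1/q}$. The complex case is handled as in the earlier propositions by identifying $\mathbb{C}^{n}$ with $\mathbb{R}^{2n}$ (the real vectors $x_{v}$ embedding as complex vectors with vanishing imaginary part), which only alters the constants.

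I expect the main obstacle to be the simultaneous calibration in the third step: one must check that $m\asymp k/\log_{2}(1+2n/k)$ makes $\log_{2}N\gtrsim k$ and $m^{-(1/p-1/q)}\gtrsim(k^{-1}\log_{2}(1+2n/k))^{1/p-1/q}$ hold \emph{at once}, which requires controlling the doubly-logarithmic discrepancy between $\log_{2}(2n/m)$ and $\log_{2}(1+2n/k)$. This is precisely what confines the packing argument to the mid-range, the remaining top portion being absorbed by the volume estimate of Proposition~\ref{pro:Triebel Proposition}.
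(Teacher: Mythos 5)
Your proposal is correct and follows essentially the same route as the paper's own proof: a greedy (Gilbert--Varshamov type) construction of a ternary constant-weight code, the induced $\ell_{q}^{n}$-separated subset of $\overline{B}_{\ell_{p}^{n}}$ after scaling by $m^{-1/p}$, the packing-to-entropy step, the calibration $m\asymp k/\log_{2}\left(1+\frac{2n}{k}\right)$, and a separate treatment of $k\asymp n$. The only cosmetic differences are your parameterization (weight $m$, Hamming distance $m/2$, versus the paper's weight $2m$, distance $m$), the direction of the calibration (you fix $m$ in terms of $k$ and verify both bounds, while the paper sets $k=\log_{2}a$ and inverts), and your use of Proposition \ref{pro:Triebel Proposition} for the top part of the range, where the paper instead combines monotonicity of $e_{k}$ with the endpoint estimate obtained from its own packing bound.
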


\begin{proof}
In this proof, we will first consider the spaces $\ell_{p}^{n}$ and
$\ell_{q}^{n}$ as real valued and begin with two arbitrary integers
$n,m\in\mathbb{N}$ with $n\geq4$ and $1\leq m\leq\frac{n}{4}$ and
define the set

\[
S:=\left\{ x=\left(x_{j}\right)_{j=1}^{n}\in\left\{ -1,0,1\right\} ^{n}:\,\sum_{j=1}^{n}\vert x_{j}\vert=2m\right\} .
\]

It is plain to see, that $\#S={n \choose 2m}\cdot2^{2m}$ , since
we need exactly $2m$ components of every element, which are not zero.
There are ${n \choose 2m}$ ways to choose from them and because we
can only choose between $-1$ and $1$, there are $2^{2m}$ ways to
design such an element. Furthermore we notice, that $\left(2m\right)^{-1/p}S$
is contained in the unit sphere of $\ell_{p}^{n}$. Let $h$ be the
Hamming distance on $S$, which is

\[
h\left(x,y\right):=\#\left\{ j\in\left\{ 1,\ldots,n\right\} :x_{j}\neq y_{j}\right\} .
\]

We observe, that for fixed $x\in S$ we get an upper bound for 

\[
\#\left\{ y\in S:\,h\left(x,y\right)\leq m\right\} \leq{n \choose m}\cdot3^{m},
\]

since we can obtain every element $y\in S$ with $h\left(x,y\right)\leq m$
as follows: If we choose an arbitrary set $J\subset\left\{ 1,\ldots,n\right\} $
with $\#J=m$, then we set $y_{j}=x_{j}$ for $j\notin J$ and choose
$y_{j}\in\left\{ -1,0,1\right\} $ arbitrarily for $j\in J$. We proceed
by defining an arbitrary subset $A\subset S$ with a cardinality not
exceeding $a:={n \choose 2m}/{n \choose m}$. Hence

\begin{eqnarray*}
\#\left\{ y\in S:\,\exists x\in A\,\mbox{with }h\left(x,y\right)\leq m\right\}  & \leq & \#A\cdot{n \choose m}\cdot3^{m}\\
 & \leq & {n \choose 2m}\cdot3^{m}<\#S.
\end{eqnarray*}

Through this estimate, it has been shown, that we can find an element
$y\in S$ with $h\left(x,y\right)>m$ for all $x\in A$. Therefore
we may \emph{inductively} construct a subset $\tilde{A}\subseteq S$
with $\#\tilde{A}>a$ and $h\left(x,y\right)>m$ for $x,y\in\tilde{A},\,x\neq y$.
For such $x,y$ we conclude $\Vert x-y\Vert_{q}>m^{1/q}$ . Now we
see that $\left(2m\right)^{-1/p}\tilde{A}\subset\overline{B}_{\ell_{p}^{n}}$.
As we have already established, this set has a cardinality larger
than $a$. Furthermore we see, that the elements of this set have
a distance of $\Vert x-y\Vert_{q}>\left(2m\right)^{-1/p}\cdot m^{1/q}=:\varepsilon$.
If we now set $k:=\log_{2}a$ and use the notation of entropy numbers
$e_{\lambda}$ with $\lambda\geq1$ of the first step of Proposition
\ref{pro:Edmunds Triebel Proposition} we see 

\begin{equation}
e_{k}\geq\frac{\varepsilon}{2}=c_{1}m^{\frac{1}{q}-\frac{1}{p}},\label{eq:kpehnestimation}
\end{equation}

where $c_{1}>0$ is independent of $k$ or $n$. Now we have a closer
look at $a$, which is

\[
a=\frac{{n \choose 2m}}{{n \choose m}}=\frac{m!\left(n-m\right)!}{\left(2m\right)!\left(n-2m\right)!}=\prod_{j=1}^{m}\frac{n-2m+j}{m+j}.
\]

By our choice of $n$ and $m$ we notice that $f\left(x\right)=\frac{n-2m+x}{m+x}$
decreases for $x>0$. Therefore we can estimate $a$ through $\left(\frac{n-m}{2m}\right)^{m}\leq a\leq\left(\frac{n-2m}{m}\right)^{m}$
and get

\begin{equation}
c_{2}m\log_{2}\left(\frac{n}{m}\right)\leq m\log_{2}\left(\frac{n-m}{2m}\right)\leq k\leq m\log_{2}\left(\frac{n-2m}{m}\right)\leq m\log_{2}\left(\frac{n}{m}\right)\label{eq:k=0000FChnestimation2}
\end{equation}

for some $c_{2}>0$, which is independent of $n$ and $m$. Furthermore
we see that the function $g\left(x\right)=x\cdot\log_{2}\left(\frac{n}{x}\right)$
is strictly increasing on $\left[1;\frac{n}{4}\right]$ and maps this
interval on $\left[\log_{2}n;\frac{n}{2}\right]$. Since this function
is strictly increasing and continuous, its inverse exists on the latter
interval and we see that $x\leq\frac{y}{\log_{2}\left(\frac{n}{y}\right)}$,
by

\begin{eqnarray*}
y & = & x\cdot\log_{2}\left(\frac{n}{x}\right)\Longleftrightarrow\frac{n}{y}=\frac{n}{x}\cdot\frac{1}{\log_{2}\left(\frac{n}{x}\right)}\\
\Longrightarrow\log_{2}\left(\frac{n}{y}\right) & = & \log_{2}\left(\frac{n}{x}\right)-\log_{2}\log_{2}\left(\frac{n}{x}\right)\\
\Longrightarrow\frac{y}{\log_{2}\left(\frac{n}{y}\right)} & = & x\cdot\frac{\log_{2}\left(\frac{n}{x}\right)}{\log_{2}\left(\frac{n}{x}\right)\left[1-\frac{\log_{2}\log_{2}\left(\frac{n}{x}\right)}{\log_{2}\left(\frac{n}{x}\right)}\right]}\geq x.
\end{eqnarray*}

The last inequality occurs because $x\in\left[1;\frac{n}{4}\right]$.
We now consider $\log_{2}n\leq k\leq\frac{c_{2}n}{2}$ and set $x=m$
as well as $y=k$ and get $m\leq2\cdot\frac{k}{\log_{2}\left(\frac{n}{k}+1\right)}$,
because $\frac{n}{k}\geq2$ and therefore $2\cdot\log_{2}\left(\frac{n}{k}\right)\geq\log_{2}\left(\frac{n}{k}+1\right)$.

Furthermore we conclude with \eqref{eq:kpehnestimation} and \eqref{eq:k=0000FChnestimation2}

\begin{eqnarray*}
e_{k} & \geq & c\cdot\left(\frac{\log_{2}\left(1+\frac{n}{k}\right)}{k}\cdot\underset{\geq c_{3}}{\underbrace{\frac{k}{m\log_{2}\left(\frac{n}{m}+1\right)}}}\cdot\underset{\geq c_{4}}{\underbrace{\frac{\log_{2}\left(\frac{n}{m}+1\right)}{\log_{2}\left(\frac{n}{k}+1\right)}}}\right)^{1/p-1/q}\\
 & \geq & c'\left(\frac{\log_{2}\left(1+\frac{n}{k}\right)}{k}\right)^{1/p-1/q}\,\,\,\mbox{for }\,\,\,\log_{2}n\leq k\leq\frac{c_{2}n}{2}
\end{eqnarray*}

for $c'>0$, which is independent of $n$ and $k$. The lower estimate
$c_{4}$ arises from the following

\[
\frac{\log_{2}\left(\frac{n}{m}+1\right)}{\log_{2}\left(\frac{n}{k}+1\right)}\geq\frac{\log_{2}n}{\log_{2}\left(\frac{n}{\log_{2}n}+1\right)}\geq\tilde{c}\cdot\frac{\log_{2}n}{\log_{2}\left(\frac{n}{\log_{2}n}\right)}=\tilde{c}\cdot\frac{\log_{2}n}{\log_{2}n-\log_{2}\log_{2}n}\geq c_{4}.
\]

The case $\frac{c_{2}n}{2}\leq k\leq n$ follows from the monotonicity
of entropy numbers and a lower estimate, which we get from \eqref{eq:kpehnestimation}
through 

\[
e_{n}\geq c''n^{\frac{1}{q}-\frac{1}{p}},
\]

for a certain $c''>0$ (also independent of $n$ and $k$), since
for these 

\[
e_{k}\geq c'''\cdot\left(\frac{\log_{2}\left(1+\frac{n}{k}\right)}{k}\right)^{1/p-1/q}
\]

is valid. 

Of course this was only the proof for real-valued sequence spaces
$\ell_{p}^{n}$ but the proof is analogously, if we set $n=2l$ and
consider complex-valued sequence spaces.
\end{proof}
Now we estimated every case and finish this part by summarizing the
results in the following theorem.
\begin{thm}
\label{thm:(Behaviour-of-e)}(Behavior of $e_{k}\left(\mbox{id : }\ell_{p}^{n}\rightarrow\ell_{q}^{n}\right)$)

Let $0<p\leq q\leq\infty$. Then

\[
e_{k}\left(\mbox{id : }\ell_{p}^{n}\rightarrow\ell_{q}^{n}\right)\sim\begin{cases}
1 & \mbox{if }1\leq k\leq\log_{2}\left(2n\right)\\
\left(k^{-1}\log_{2}\left(1+\frac{2n}{k}\right)\right)^{\frac{1}{p}-\frac{1}{q}} & \mbox{if }\log_{2}\left(2n\right)\leq k\leq2n\\
2^{-\frac{k}{2n}}\left(2n\right)^{\frac{1}{q}-\frac{1}{p}} & \mbox{if }k\geq2n.
\end{cases}
\]
\end{thm}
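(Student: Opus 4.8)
The plan is to read Theorem~\ref{thm:(Behaviour-of-e)} off as the assembly of the three preceding estimates, since the symbol $\sim$ only demands a two-sided bound with constants independent of $n$ and $k$. The upper (``$\leq$'') direction is already finished: Proposition~\ref{pro:Edmunds Triebel Proposition} furnishes exactly the right-hand side in each of the three cases, so nothing further is required there, and I would simply cite it.

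For the matching lower bound I would split into the same three ranges of $k$ and invoke the lower estimates one regime at a time. In the small range $1\leq k\leq\log_2(2n)$ the first case of Proposition~\ref{pro:Triebel Proposition} yields $e_k\geq c$, which combined with the bound $e_k\leq c_{p,q}$ from the first case of Proposition~\ref{pro:Edmunds Triebel Proposition} gives $e_k\sim 1$. In the large range $k\geq 2n$ the second case of Proposition~\ref{pro:Triebel Proposition}---which is in fact valid for \emph{all} $k\in\mathbb{N}$---supplies $e_k\geq c\,2^{-k/(2n)}(2n)^{1/q-1/p}$, matching the third case of the upper estimate. In the mid-range $\log_2(2n)\leq k\leq 2n$ the bound $e_k\geq c\,(k^{-1}\log_2(1+2n/k))^{1/p-1/q}$ from Lemma~\ref{lem:K=0000FCehn Lemma} has precisely the form of the second case of Proposition~\ref{pro:Edmunds Triebel Proposition}, closing that gap as well.

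What remains is pure bookkeeping to confirm the three pieces fit together as one equivalence, which I would verify at the two breakpoints. At $k=\log_2(2n)$ the middle quantity $k^{-1}\log_2(1+2n/k)$ is comparable to a constant, because $\log_2\!\big(1+2n/\log_2(2n)\big)\sim\log_2(2n)$, so it glues to the constant $1$ of the first case. At $k=2n$ one has $k^{-1}\log_2(1+2n/k)=(2n)^{-1}\log_2 2=(2n)^{-1}$, whence $(k^{-1}\log_2(1+2n/k))^{1/p-1/q}\sim(2n)^{1/q-1/p}$, which agrees with $2^{-k/(2n)}(2n)^{1/q-1/p}$ at $k=2n$ up to the factor $2^{-1}$. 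Since all constants are independent of $n$ and $k$, collecting these observations gives the stated $\sim$ in every regime.

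The honest assessment is that there is no genuine obstacle here: the analytic difficulty was already absorbed into Propositions~\ref{pro:Edmunds Triebel Proposition} and~\ref{pro:Triebel Proposition} and Lemma~\ref{lem:K=0000FCehn Lemma}. The only mild care needed is to ensure the lower estimate of Lemma~\ref{lem:K=0000FCehn Lemma} is applied on the full interval $\log_2(2n)\leq k\leq 2n$ (its proof treats $\log_2 n\leq k\leq c_2 n/2$ and then extends to $k\leq n$ by the monotonicity~(M$_e$) of Theorem~\ref{thm:(Properties-of-e)}), and to absorb the harmless discrepancy between the $\log_2(1+n/k)$ appearing inside that proof and the $\log_2(1+2n/k)$ in the statement into the constant $c$.
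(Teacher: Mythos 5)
Your proposal is correct and is essentially identical to the paper's own proof, which simply assembles Proposition~\ref{pro:Edmunds Triebel Proposition} (upper bound, all three ranges), Proposition~\ref{pro:Triebel Proposition} (lower bound for small $k$ and for all $k$, covering the range $k\geq 2n$), and Lemma~\ref{lem:K=0000FCehn Lemma} (lower bound in the mid-range). Your additional checks at the breakpoints and the remark on the range of validity inside Lemma~\ref{lem:K=0000FCehn Lemma} are harmless extra care beyond what the paper records.
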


\begin{proof}
The proof rests on Propositions \ref{pro:Edmunds Triebel Proposition},
\ref{pro:Triebel Proposition} and on Lemma \ref{lem:K=0000FCehn Lemma}. 
\end{proof}

\section{$\mbox{id : }\ell_{p}^{n}\rightarrow\ell_{q}^{n}$ and approximation
numbers }

Next we will give upper and lower estimates of the numbers $a_{k}\left(\mbox{id : }\ell_{p}^{n}\rightarrow\ell_{q}^{n}\right)$.
Here, we will mostly follow \cite[Subsect. 3.2.3.]{Edmunds and Triebel}
and as these authors have done, we will denote real valued sequence
spaces by $\ell_{p}^{n,\mathbb{R}}$ and correspondingly $a_{k}^{\mathbb{R}}:=\left(\mbox{id : }\ell_{p}^{n,\mathbb{R}}\rightarrow\ell_{q}^{n,\mathbb{R}}\right)$
the approximation numbers of the identity operator, acting between
real valued sequence spaces. We will first mention an estimate for
the latter ones and proceed by giving a relation between approximation
numbers of the identity operator acting between real and complex valued
sequence spaces later on.
\begin{thm}
\label{thm:real-valued-approximation-numbers}Let $k\leq n$. We define

\[
\Phi\left(n,k,p,q\right)=\begin{cases}
\left(\min\left\{ 1,n^{\frac{1}{q}}k^{-\frac{1}{2}}\right\} \right)^{\frac{\frac{1}{p}-\frac{1}{q}}{\frac{1}{2}-\frac{1}{q}}} & \mbox{if }2\leq p<q\leq\infty\\
\max\left\{ n^{\frac{1}{q}-\frac{1}{p}},\min\left\{ 1,n^{\frac{1}{q}}k^{-\frac{1}{2}}\right\} \sqrt{1-\frac{k}{n}}\right\}  & \mbox{if }1\leq p<2\leq q\leq\infty\\
\max\left\{ n^{\frac{1}{q}-\frac{1}{p}},\left(\sqrt{1-\frac{k}{n}}\right)^{\frac{\frac{1}{p}-\frac{1}{q}}{\frac{1}{p}-\frac{1}{2}}}\right\}  & \mbox{if }1\leq p<q\leq2
\end{cases}
\]

and 

\[
\Psi\left(n,k,p,q\right)=\begin{cases}
\Phi\left(n,k,p,q\right) & \mbox{if }1\leq p<q<p'\\
\Phi\left(n,k,q',p'\right) & \mbox{if }\mbox{\ensuremath{\max}}\left\{ p,p'\right\} <q\leq\infty
\end{cases}
\]

where for given $1\leq p,q\leq\infty$, $p'$ and $q'$ are given
through $\frac{1}{p}+\frac{1}{p'}=1$ and $\frac{1}{q}+\frac{1}{q'}=1$
respectively. 

(i) If we assume that $1\leq p<q\leq\infty$ and $\left(p,q\right)\neq\left(1,\infty\right)$,
Then
\[
a_{k}^{\mathbb{R}}\sim\Psi\left(n,k,p,q\right),
\]
where the constants of equivalence only depend on $p$ and $q$.

(ii) If $1\leq p\leq q\leq2$ or $2\leq p\leq q\leq\infty$, then

\[
a_{k}^{\mathbb{R}}\geq\sqrt{\left(1-\frac{k}{n}\right)}.
\]
\end{thm}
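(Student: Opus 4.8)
The plan is to separate the two assertions: part~(ii) is a clean one-sided bound that I can prove directly by a Hilbert-space sectioning argument, whereas the sharp two-sided equivalence in part~(i) is the genuinely hard statement and must be assembled from known endpoint estimates together with duality. The basic reduction in both parts is the identity $a_{k}(\mathrm{id}:\ell_{p}^{n}\to\ell_{q}^{n})=a_{k}(\mathrm{id}:\ell_{q'}^{n}\to\ell_{p'}^{n})$, valid here because all exponents satisfy $p,q\ge1$, so the spaces are Banach and the approximation numbers are invariant under passing to the adjoint (see \cite{Carl=000026Stephani}). Since $p\le q$ forces $q'\le p'$, this duality relates the case $\{1\le p<q<p'\}$ to the case $\{\max\{p,p'\}<q\le\infty\}$, which is exactly why $\Psi$ is defined via $\Phi(n,k,p,q)$ and $\Phi(n,k,q',p')$; it likewise interchanges the two hypotheses $1\le p\le q\le2$ and $2\le p\le q\le\infty$ of part~(ii).

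For part~(ii) it therefore suffices to treat $2\le p\le q\le\infty$. The starting point is the kernel lower bound: if $S\in\mathcal{L}(\ell_p^n,\ell_q^n)$ has $\mathrm{rank}\,S<k$, then $V:=\ker S$ satisfies $\dim V\ge n-k+1$ and $(\mathrm{id}-S)x=x$ for $x\in V$, so shrinking to dimension exactly $n-k+1$ gives
\[
a_{k}\ge\min_{\dim V=n-k+1}\ \sup_{0\neq x\in V}\frac{\Vert x\Vert_{q}}{\Vert x\Vert_{p}}.
\]
Fix such a $V$, let $P_{V}$ be the orthogonal ($\ell_2$-)projection onto $V$, and set $\lambda_{j}:=\Vert P_{V}e_{j}\Vert_{2}^{2}$. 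Since $\sum_{j=1}^{n}\lambda_{j}=\mathrm{tr}\,P_{V}=n-k+1$, some coordinate $j_{0}$ has $\lambda_{j_{0}}\ge(n-k+1)/n$. The vector $w:=P_{V}e_{j_{0}}\in V$ satisfies $w_{j_{0}}=\langle P_{V}e_{j_{0}},e_{j_{0}}\rangle=\lambda_{j_{0}}$ and $\Vert w\Vert_{2}=\lambda_{j_{0}}^{1/2}$. Using $\Vert w\Vert_{q}\ge\vert w_{j_{0}}\vert=\lambda_{j_{0}}$ together with $\Vert w\Vert_{p}\le\Vert w\Vert_{2}=\lambda_{j_{0}}^{1/2}$ (where $p\ge2$ is essential) yields $\Vert w\Vert_{q}/\Vert w\Vert_{p}\ge\lambda_{j_{0}}^{1/2}\ge\sqrt{1-k/n}$. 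As this holds for every admissible $V$, the infimum is $\ge\sqrt{1-k/n}$, which proves part~(ii) in this range; the complementary range $1\le p\le q\le2$ follows by the duality above.

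For part~(i) the remaining task is to match upper and lower bounds inside $\Phi$ in each of its three regimes, and then invoke duality for the second case of $\Psi$. For the \emph{upper} bounds I would combine the trivial estimate $a_{k}\le\Vert\mathrm{id}\Vert=n^{1/q-1/p}$ with explicit rank-$(k-1)$ approximants and real interpolation (using the multiplicativity and additivity from Theorem~\ref{thm:Properties a}) between the computable endpoints $\mathrm{id}:\ell_{p}^{n}\to\ell_{p}^{n}$ and $\mathrm{id}:\ell_{p}^{n}\to\ell_{\infty}^{n}$, resp.\ $\ell_{1}^{n}\to\ell_{q}^{n}$; this is what produces the exponent $(1/p-1/q)/(1/2-1/q)$ and the factor $\min\{1,n^{1/q}k^{-1/2}\}$. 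For the \emph{lower} bounds, the sectioning argument of part~(ii) (and minor variants of it) already supplies the $\sqrt{1-k/n}$ contributions, while the pointwise comparison $\Vert x\Vert_q\ge n^{1/q-1/p}\Vert x\Vert_p$ inside the same inf-sup supplies the floor $a_k\ge n^{1/q-1/p}$.

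The main obstacle is precisely the matching \emph{lower} bound in the mixed regime $1\le p<2\le q$: bounding $\min\{1,n^{1/q}k^{-1/2}\}$ from below is a Gluskin/Garnaev--Gluskin-type statement requiring random-subspace arguments rather than the elementary volume comparison already used for entropy numbers in Proposition~\ref{pro:Triebel Proposition}, and it lies well beyond the self-contained toolkit developed so far (even the relation $d_{n}\le a_{n}$ of Theorem~\ref{thm:relationship a d} only transports width estimates of comparable depth). For this reason I would, as the author does, quote the complete two-sided estimate for part~(i) from \cite[Subsect.~3.2.3.]{Edmunds and Triebel}, the genuinely self-contained contribution being the short proof of part~(ii) and the duality bookkeeping that organizes $\Phi$ and $\Psi$.
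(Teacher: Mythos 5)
Your proposal is correct, and it in fact does strictly more than the paper: the paper's entire proof of this theorem is the one-line reference to \cite[Subsect. 3.2.3, Thm. 1]{Edmunds and Triebel}, with no argument supplied for either part, whereas you give a complete, self-contained proof of part (ii) and quote the reference only for part (i). Your part (ii) argument is sound and is the classical sectioning (Bernstein-width) proof: rank-nullity gives $\dim\ker S\ge n-k+1$ for every competitor $S$ with $\mathrm{rank}\,S<k$; the trace identity $\sum_{j=1}^{n}\Vert P_{V}e_{j}\Vert_{2}^{2}=\dim V=n-k+1$ yields a coordinate with $\lambda_{j_{0}}\ge(n-k+1)/n$; and the two elementary inequalities $\Vert w\Vert_{q}\ge\vert w_{j_{0}}\vert=\lambda_{j_{0}}$ and $\Vert w\Vert_{p}\le\Vert w\Vert_{2}=\lambda_{j_{0}}^{1/2}$ (the latter being exactly where $p\ge2$ enters) give $\Vert w\Vert_{q}/\Vert w\Vert_{p}\ge\lambda_{j_{0}}^{1/2}\ge\sqrt{1-k/n}$. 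The duality bookkeeping is also right: in finite dimensions $a_{k}(T)=a_{k}(T^{*})$ holds with constant one (by reflexivity, $a_{k}(T)=a_{k}(T^{**})\le a_{k}(T^{*})\le a_{k}(T)$), the adjoint of $\mathrm{id}:\ell_{p}^{n}\to\ell_{q}^{n}$ is $\mathrm{id}:\ell_{q'}^{n}\to\ell_{p'}^{n}$, and this interchange maps $1\le p\le q\le2$ onto $2\le q'\le p'\le\infty$ and the second case of $\Psi$ onto the first, as you say. Your assessment of part (i) is likewise fair: the matching lower bounds (notably in the regime $1\le p<2\le q$) rest on Gluskin-type random-subspace constructions \cite{Gluskin}, which is precisely why both you and the author defer to \cite[Subsect. 3.2.3, Thm. 1]{Edmunds and Triebel}. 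What your route buys is that part (ii) no longer depends on the external reference at all; what it costs is nothing, since the citation for part (i) is unavoidable with the tools developed in this thesis.
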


\begin{proof}
A reference for the proof is given in \cite[Subsect. 3.2.3, Thm. 1]{Edmunds and Triebel}.
\end{proof}
Since these were only the approximation numbers for real valued sequence
spaces, we want to give a relationship to the complex ones, as it
is done in \cite[Subsect. 3.2.3., Prop.]{Edmunds and Triebel}
\begin{lem}
\label{lem:relationship}Let $k\in\mathbb{N}$, $k\leq n$ and suppose
that $p,q\in\left[1,\infty\right]$. Then 

\[
a_{2k-1}^{\mathbb{R}}\leq a_{k}\leq2a_{2k}^{\mathbb{R}}
\]

(where $\alpha_{k}^{\mathbb{R}}=0$ if $k>n$).
\end{lem}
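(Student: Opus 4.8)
The plan is to pass through the identification $\mathbb{C}^{n}\cong\mathbb{R}^{2n}$ by real and imaginary parts, already adopted in the excerpt, under which the complex $\ell_{p}^{n}$-norm becomes a real norm on $\mathbb{R}^{2n}$ equivalent to the $\ell_{p}^{2n,\mathbb{R}}$-norm, and under which a \emph{complex}-linear operator of complex rank $j$ realifies to a \emph{real}-linear operator of real rank $2j$. This doubling of the rank is precisely the source of the index shift $k\mapsto 2k$, and it reduces the lemma to comparing, for the identity, the infimum over complex-linear finite-rank maps with the (a priori smaller, since it is over a larger class of competitors) infimum over all real-linear finite-rank maps. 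I would then prove the two inequalities separately.

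For the left-hand inequality $a_{2k-1}^{\mathbb{R}}\leq a_{k}$, I would start from a complex-linear $L$ with $\mathrm{rank}\,L<k$ and $\Vert\mathrm{id}-L\Vert\leq a_{k}+\varepsilon$ (Definition \ref{def:Approximationszahlen}). Restricting to real inputs and taking real parts yields a real-linear operator $\widetilde{L}$ whose range sits inside the realification of $\mathcal{R}(L)$; since $\dim_{\mathbb{C}}\mathcal{R}(L)\leq k-1$, its real rank is at most $2k-2$, hence $<2k-1$. Because $\vert\mathrm{Re}\,z\vert\leq\vert z\vert$ coordinatewise and the complex and real $\ell_{p}^{n}$-norms agree on real vectors, one obtains $\Vert\mathrm{id}-\widetilde{L}\Vert_{\mathbb{R}}\leq\Vert\mathrm{id}-L\Vert$, so $\widetilde{L}$ is an admissible competitor for $a_{2k-1}^{\mathbb{R}}$; letting $\varepsilon\downarrow0$ gives the claim.

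For the right-hand inequality $a_{k}\leq 2a_{2k}^{\mathbb{R}}$, I would conversely take a real-linear $S$ with $\mathrm{rank}\,S<2k$ and $\Vert\mathrm{id}-S\Vert_{\mathbb{R}}\leq a_{2k}^{\mathbb{R}}+\varepsilon$, and manufacture from it a \emph{complex}-linear operator. Writing $J$ for multiplication by $i$ (an $\ell_{p}^{n}$-isometry with $J^{2}=-\mathrm{id}$ and $J\,\mathrm{id}=\mathrm{id}\,J$), the symmetrization $\widetilde{S}=\tfrac{1}{2}(S-JSJ)$ is complex-linear and, using $-J\,\mathrm{id}\,J=\mathrm{id}$, satisfies $\mathrm{id}-\widetilde{S}=\tfrac12(\mathrm{id}-S)-\tfrac12 J(\mathrm{id}-S)J$, whence $\Vert\mathrm{id}-\widetilde{S}\Vert\leq\Vert\mathrm{id}-S\Vert$. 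The factor $2$ in the statement then enters only when re-expressing $a_{2k}^{\mathbb{R}}$, which is measured in $\ell_{p}^{2n,\mathbb{R}}$, in terms of the (grouped) real norm coming from the identification, equivalently via the triangle inequality applied to the real and imaginary components. Taking the infimum over $S$ and invoking the monotonicity of Theorem \ref{thm:Properties a} closes the argument.

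The step I expect to be the main obstacle is the rank bookkeeping in the upper bound: the symmetrization $\tfrac12(S-JSJ)$ a priori \emph{doubles} the real rank, so one must verify that for the identity the resulting complex-linear map can still be taken of complex rank $<k$ (equivalently, that a near-optimal real approximation is compatible with the complex structure up to the admitted factor). Controlling the complex rank and the operator norm \emph{simultaneously} under the real$\leftrightarrow$complex passage is the only genuinely delicate point; the remaining estimates are routine once the identification $\mathbb{C}^{n}\cong\mathbb{R}^{2n}$ and the equivalence of the associated norms have been set up.
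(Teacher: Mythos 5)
Your lower-bound argument is correct and complete: a complex-linear $L$ with $\mathrm{rank}_{\mathbb{C}}L<k$, restricted to real vectors and followed by the coordinatewise real part, is real-linear of real rank at most $2k-2$; since $\vert\mathrm{Re}\,z_{j}\vert\leq\vert z_{j}\vert$ and real vectors carry the same norm in $\ell_{p}^{n,\mathbb{R}}$ as in the complex $\ell_{p}^{n}$, it is an admissible competitor in Definition \ref{def:Approximationszahlen}, and $a_{2k-1}^{\mathbb{R}}\leq a_{k}$ follows. For the record, the paper itself gives no proof of this lemma at all --- it only refers to \cite[Subsect. 3.2.3., Prop.]{Edmunds and Triebel} --- so yours is the only argument on the table.

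The upper bound, however, has a genuine gap, which you flag but do not close, and it cannot be closed along the route you propose. For the symmetrization $\widetilde{S}=\tfrac{1}{2}\left(S-JSJ\right)$ one has $\mathcal{R}(\widetilde{S})\subseteq\mathcal{R}(S)+J\mathcal{R}(S)$, a $J$-invariant subspace whose complex dimension is bounded by $\mathrm{rank}_{\mathbb{R}}S$ --- not by half of it --- and this bound is attained: for $n=1$ and $S(x+iy)=x$ one computes $\widetilde{S}=\tfrac{1}{2}\,\mathrm{id}$, so real rank $1$ produces complex rank $1$, not $0$. Hence a real $S$ with $\mathrm{rank}\,S<2k$ only furnishes a competitor for $a_{2k}$, and your construction proves the index-matched inequality $a_{2k}\leq2a_{2k}^{\mathbb{R}}$ (more generally $a_{j}\leq2a_{j}^{\mathbb{R}}$), never the claimed $a_{k}\leq2a_{2k}^{\mathbb{R}}$; this is so under either of the two readings of $a_{j}^{\mathbb{R}}$, between which your proposal silently switches ($n$-dimensional real spaces in your lower bound, $2n$-dimensional ones in your upper bound). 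Worse, with the convention this paper actually fixes --- $a_{j}^{\mathbb{R}}$ are the approximation numbers of $\mathrm{id}:\ell_{p}^{n,\mathbb{R}}\rightarrow\ell_{q}^{n,\mathbb{R}}$, so that $a_{j}^{\mathbb{R}}=0$ for $j>n$ --- the claimed upper bound is simply false: for $p=q=2$ and $n/2<k\leq n$ one has $a_{k}=1$ by (M$_{a}$) and (N$_{a}$) of Theorem \ref{thm:Properties a}, while $2a_{2k}^{\mathbb{R}}=0$ by (R$_{a}$). So no rank bookkeeping can rescue the statement as transcribed; the transcription itself garbles the cited source.

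What your method does prove --- and what suffices for the only later use, Corollary \ref{cor:Corollar-ak} --- is $a_{k}\leq2a_{k}^{\mathbb{R}}$: if $S:\mathbb{R}^{n}\rightarrow\mathbb{R}^{n}$ is real-linear with $\mathrm{rank}\,S<k$, its complexification $S_{\mathbb{C}}(x+iy):=Sx+iSy$ is complex-linear with $\mathrm{rank}_{\mathbb{C}}S_{\mathbb{C}}=\mathrm{rank}_{\mathbb{R}}S<k$, and for $q\geq1$ the triangle inequality gives $\Vert(\mathrm{id}-S_{\mathbb{C}})(x+iy)\Vert_{q}\leq\Vert x-Sx\Vert_{q}+\Vert y-Sy\Vert_{q}\leq2\Vert\mathrm{id}-S\Vert\,\Vert x+iy\Vert_{p}$, because $\Vert x\Vert_{p}$ and $\Vert y\Vert_{p}$ are both at most $\Vert x+iy\Vert_{p}$; this is exactly where the hypothesis $p,q\in\left[1,\infty\right]$ and the factor $2$ enter. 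Combined with your correct lower bound this yields $a_{2k-1}^{\mathbb{R}}\leq a_{k}\leq2a_{k}^{\mathbb{R}}$, which for $k\leq n/4$ serves the same purpose, since $a_{2k-1}^{\mathbb{R}}$ and $a_{k}^{\mathbb{R}}$ are comparable in that range by Theorem \ref{thm:real-valued-approximation-numbers}.
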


\begin{proof}
Again we shall use this statement without proof in this bachelor's
thesis and refer to the above mentioned proposition.
\end{proof}
For a matter of completeness we shall add the following two lemmata.
They are directly taken from \cite[Lem 3.3., Lem 3.4.]{JanVybiral}.
\begin{lem}
If $1\leq k\leq n<\infty$ and $0<q\leq p\leq\infty$, then

\[
a_{k}=\left(n-k\right)^{1/q-1/p}.
\]
\end{lem}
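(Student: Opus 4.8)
The plan is to prove the identity by establishing the two matching estimates $a_{k}\le (n-k)^{1/q-1/p}$ and $a_{k}\ge (n-k)^{1/q-1/p}$ separately, the first by exhibiting an explicit competitor and the second by a rank/kernel argument. Throughout I would use the elementary fact that for $0<q\le p\le\infty$ the norm of the identity on an $m$-dimensional coordinate block is $\Vert\mbox{id}:\ell_{p}^{m}\rightarrow\ell_{q}^{m}\Vert=m^{1/q-1/p}$; this is the reverse-Hölder (power-mean) inequality already exploited in the proof of Proposition \ref{pro:Edmunds Triebel Proposition}, with the supremum attained at the constant vector $m^{-1/p}(1,\dots,1)$.

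For the upper bound I would fix a set $J\subset\{1,\dots,n\}$ of $k$ indices and let $S=P_{J}$ be the coordinate projection onto $J$, an admissible competitor of rank $k$ in the infimum \eqref{eq:approximation numer}. Then $(T-S)x$ depends only on the $n-k$ coordinates outside $J$ and acts there as the identity, so that $\Vert T-S\Vert_{\ell_{p}^{n}\rightarrow\ell_{q}^{n}}=\Vert\mbox{id}:\ell_{p}^{n-k}\rightarrow\ell_{q}^{n-k}\Vert=(n-k)^{1/q-1/p}$, whence $a_{k}\le (n-k)^{1/q-1/p}$.

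The lower bound is the substantial part. Any competitor $S$ in \eqref{eq:approximation numer} has rank at most $k$, so its kernel satisfies $\dim\ker S\ge n-k$. Choosing a subspace $W\subseteq\ker S$ with $\dim W=n-k$ and noting $(T-S)x=x$ for $x\in W$, one gets $\Vert T-S\Vert_{\ell_{p}^{n}\rightarrow\ell_{q}^{n}}\ge\sup_{0\neq x\in W}\Vert x\Vert_{q}/\Vert x\Vert_{p}$. Thus everything reduces to the \emph{subspace inequality}: every subspace $W$ with $\dim W=m$ contains a nonzero $x$ with $\Vert x\Vert_{q}\ge m^{1/q-1/p}\Vert x\Vert_{p}$. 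For $p=\infty$ this has a clean proof: the section $W\cap[-1,1]^{n}$ is a symmetric polytope of dimension $m$, so any of its vertices $v$ makes at least $m$ of the constraints $\vert x_{j}\vert\le 1$ active; hence at least $m$ coordinates of $v$ have modulus $1$, giving $\Vert v\Vert_{q}^{q}\ge m$ while $\Vert v\Vert_{\infty}=1$, i.e. $\Vert v\Vert_{q}\ge m^{1/q}$. Taking the infimum over all $S$ then yields $a_{k}\ge (n-k)^{1/q-1/p}$, and together with the upper bound the claimed equality.

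The hard part is the subspace inequality for finite $p$, where $W\cap\overline{B}_{\ell_{p}^{n}}$ is no longer a polytope and the extremal vector need not be flat on its support. I would attack this by a variational/compactness argument: the quantity $\min_{\dim W=m}\max_{\Vert x\Vert_{p}\le 1,\,x\in W}\Vert x\Vert_{q}$ is attained, and a first-order perturbation analysis at an optimal pair $(W,x)$ should force the maximiser to have constant modulus on a support of size at least $m$, reducing it to the flat case understood above. The cases $p<1$ or $q<1$ are only a matter of passing to an equivalent $\varrho$-norm via Theorem \ref{thm:Normequivalence}, which does not affect the exact power-type exponents, so the core difficulty remains pinning down the \emph{exact} constant in the finite-$p$ subspace inequality.
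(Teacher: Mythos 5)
Your overall architecture — upper bound by a coordinate projection, lower bound by restricting $T-S$ to $\ker S$, and an extreme-point (Milman-type) argument for $p=\infty$ — is the classical Pietsch route, and it is in fact all the paper itself offers, since the thesis gives no proof and only cites Pietsch resp.\ Vyb\'{i}ral. But your proposal has a genuine, self-acknowledged hole at exactly the point carrying all the content: the \emph{exact} subspace inequality for finite $p$ (every $m$-dimensional subspace $W$ contains $x\neq0$ with $\Vert x\Vert_{q}\geq m^{1/q-1/p}\Vert x\Vert_{p}$) is not proved; you only sketch a variational programme whose key step (that an optimiser must have constant modulus on a support of size $m$) is left unverified. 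No such programme is needed: the finite-$p$ case follows in two lines from the $p=\infty$ case you did prove. Your extreme point $v\in W$ satisfies $\vert v_{j}\vert\leq1$ for all $j$ and $\vert v_{j}\vert=1$ for at least $m$ indices; since $q\leq p$ and $\vert v_{j}\vert\leq1$ imply $\vert v_{j}\vert^{p}\leq\vert v_{j}\vert^{q}$, one gets $\Vert v\Vert_{p}^{p}\leq\Vert v\Vert_{q}^{q}$ and therefore
\[
\frac{\Vert v\Vert_{q}}{\Vert v\Vert_{p}}\geq\left(\Vert v\Vert_{q}^{q}\right)^{\frac{1}{q}-\frac{1}{p}}\geq m^{\frac{1}{q}-\frac{1}{p}}.
\]
This deduction is purely coordinatewise, so it covers the whole quasi-Banach range $0<q\leq p\leq\infty$ directly. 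By contrast, your plan to handle $p<1$ or $q<1$ by passing to an equivalent $\varrho$-norm via Theorem \ref{thm:Normequivalence} cannot work here: the lemma asserts an exact \emph{equality}, and equivalent renormings preserve estimates only up to multiplicative constants. (Also, for complex scalars the section $W\cap\overline{B}_{\ell_{\infty}^{n}}$ is not a polytope; replace ``vertex'' by ``extreme point'' and note that if fewer than $m$ coordinates of $v$ had modulus one, a nonzero $y\in W$ vanishing on those coordinates would permit the perturbation $v\pm\varepsilon y$ inside the section, contradicting extremality.)

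A second, separate defect is the rank bookkeeping. Under the paper's Definition \ref{def:Approximationszahlen} the infimum defining $a_{k}$ runs over $S$ with $\mbox{rank }S<k$, i.e.\ $\mbox{rank }S\leq k-1$; your upper-bound competitor of rank $k$ is therefore inadmissible, and in the lower bound an admissible $S$ has $\dim\ker S\geq n-k+1$, not $n-k$. Run consistently with the paper's definition, your argument proves $a_{k}=\left(n-k+1\right)^{1/q-1/p}$, which is precisely the formula in Pietsch and Vyb\'{i}ral, and which also shows the stated $\left(n-k\right)^{1/q-1/p}$ cannot be correct under this definition: for $k=n$ and $q<p$ the right-hand side vanishes, while the kernel argument gives $a_{n}\geq1$. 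So your implicit ``rank $\leq k$'' convention reproduces the thesis' off-by-one slip rather than the correct statement. Once you fix the convention and close the finite-$p$ gap as above, you obtain a complete elementary proof of the corrected lemma — which is more than the paper itself provides.
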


\begin{proof}
According to the author, this statement is a generalization of the
proof for $1\leq q\leq p\leq\infty$ from \cite[Subsect. 11.11.5., Lem.]{Albrecht Pietsch 2}.
It is also left without proof in this bachelor's thesis.
\end{proof}
\begin{lem}
Let $0<p\leq1$.

(i) Let $0<\lambda<1.$ Then there exists a number $c_{\lambda}>0$
such that for all $k,n\in\mathbb{N}$ with $n^{\lambda}<k\leq n$,
we have

\[
a_{k}\left(\mbox{id : }\ell_{p}^{n}\rightarrow\ell_{\infty}^{n}\right)\leq\frac{c_{\lambda}}{\sqrt{k}}.
\]

(ii) There is a number $c>0$ such that for $n\geq1$
\end{lem}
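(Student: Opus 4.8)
The plan is to transfer the whole problem to the Banach endpoint $p=1$ and then to attack the genuinely difficult corner $(1,\infty)$ by a direct, construction-based argument; I first observe that the relations established so far in this thesis only bound \emph{entropy numbers by approximation numbers} (Proposition~\ref{pro:supremumsdinge}) and hence cannot, on their own, produce an \emph{upper} bound for $a_{k}$.

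First I would exploit that for $0<p\le1$ one has $\Vert x\Vert_{1}\le\Vert x\Vert_{p}$, so that $\mbox{id}:\ell_{p}^{n}\to\ell_{1}^{n}$ has norm at most $1$ and the factorisation $\mbox{id}:\ell_{p}^{n}\to\ell_{\infty}^{n}=\bigl(\mbox{id}:\ell_{1}^{n}\to\ell_{\infty}^{n}\bigr)\circ\bigl(\mbox{id}:\ell_{p}^{n}\to\ell_{1}^{n}\bigr)$ holds. Combining the multiplicativity (P$_{a}$) with the monotonicity (M$_{a}$) of Theorem~\ref{thm:Properties a} gives the ideal inequality $a_{k}(RTU)\le\Vert R\Vert\,a_{k}(T)\,\Vert U\Vert$, hence
\[
a_{k}\bigl(\mbox{id}:\ell_{p}^{n}\to\ell_{\infty}^{n}\bigr)\le a_{k}\bigl(\mbox{id}:\ell_{1}^{n}\to\ell_{\infty}^{n}\bigr).
\]
Thus it suffices to settle the case $p=1$. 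Notice that the pair $(1,\infty)$ is exactly the one \emph{excluded} from Theorem~\ref{thm:real-valued-approximation-numbers}(i); this is the tell-tale sign that the interpolation/volume machinery behind that theorem degenerates here and a separate argument is unavoidable.

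The core step is $a_{k}(\mbox{id}:\ell_{1}^{n}\to\ell_{\infty}^{n})\le c_{\lambda}k^{-1/2}$ for $n^{\lambda}<k\le n$. I would prove it by an empirical (Maurey-type) construction: produce a single operator of rank $k-1$ that approximates every point of $\overline{B}_{\ell_{1}^{n}}$ to within order $k^{-1/2}$ in $\ell_{\infty}^{n}$, by randomly selecting and weighting coordinate maps and running a Chernoff estimate together with a union bound over the $2n$ extreme points $\pm e_{j}$ of $\overline{B}_{\ell_{1}^{n}}$. It is instructive to record why the cheap route fails: factoring through $\ell_{2}^{n}$ only gives $a_{k}(\mbox{id}:\ell_{1}^{n}\to\ell_{\infty}^{n})\le a_{k}(\mbox{id}:\ell_{1}^{n}\to\ell_{2}^{n})\sim\sqrt{1-k/n}$ (by Theorem~\ref{thm:real-valued-approximation-numbers} and $\Vert\mbox{id}:\ell_{2}^{n}\to\ell_{\infty}^{n}\Vert=1$), which is of order $1$ for $k\sim n/2$ and hence hopelessly weak; the sparsity of the $\ell_{1}$-ball must be used directly.

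The hard part will be removing the logarithm. The union bound over the $2n$ vertices naturally yields an error of order $\sqrt{k^{-1}\log(2n)}$, in line with the Gelfand-number bound $c_{k}(\mbox{id}:\ell_{1}^{n}\to\ell_{\infty}^{n})\le c_{k}(\mbox{id}:\ell_{1}^{n}\to\ell_{2}^{n})\sim\sqrt{k^{-1}\log(1+n/k)}$; since $a_{k}\ge c_{k}$, this $\sqrt{\log}$ cannot be deleted for \emph{all} $k$. The restriction $k>n^{\lambda}$ is precisely what rescues the clean exponent: there $\log(1+n/k)\le(1-\lambda)\log_{2}n$ is only logarithmic while $k$ is polynomial in $n$, so after careful bookkeeping the logarithm is swallowed by the constant $c_{\lambda}$ — which is why $c_{\lambda}$ must be allowed to blow up as $\lambda\downarrow0$. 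Making this absorption uniform in $n$ and $k$ is the crux of the proof. Finally, the remaining estimate~(ii) is of the same flavour and is handled by the same circle of ideas, the matching lower bound being supplied by $d_{n}(T)\le a_{n}(T)$ of Theorem~\ref{thm:relationship a d} together with the sharp entropy asymptotics of Theorem~\ref{thm:(Behaviour-of-e)}, while the passage from real to complex sequence spaces is effected by Lemma~\ref{lem:relationship}.
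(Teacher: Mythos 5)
First, a remark on the comparison you asked for: the thesis does \emph{not} prove this lemma at all --- its ``proof'' is the single sentence deferring to \cite[Lem. 3.4.]{JanVybiral} --- so your proposal has to be judged on its own merits. Your reduction to $p=1$ (via $\Vert x\Vert_{1}\le\Vert x\Vert_{p}$ for $p\le1$ and the ideal property following from (M$_{a}$), (P$_{a}$) of Theorem \ref{thm:Properties a}) is correct, and you have located the difficulty in the right place: the excluded pair $(1,\infty)$. The genuine gap is in the step you yourself call the crux. Since the extreme points of $\overline{B}_{\ell_{1}^{n}}$ are $\pm e_{j}$, one has $\Vert \mbox{id}-S\Vert_{\ell_{1}^{n}\rightarrow\ell_{\infty}^{n}}=\max_{i,j}\vert(\mbox{id}-S)_{ij}\vert$, so the task is to produce a matrix of rank $<k$ that is \emph{entrywise} within $\varepsilon$ of the identity. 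Randomly selected coordinate maps cannot do this (a coordinate projection $P_{J}$ with $J\neq\{1,\ldots,n\}$ has $\Vert\mbox{id}-P_{J}\Vert_{\ell_{1}^{n}\rightarrow\ell_{\infty}^{n}}=1$), and the natural random construction (the Gram matrix of $n$ random unit vectors in $\mathbb{R}^{k-1}$) together with a Chernoff bound and a union bound gives, and cannot beat, $\varepsilon\sim\sqrt{\log(2n)/k}$: the maximal pairwise inner product of $n$ independent random unit vectors in $\mathbb{R}^{k-1}$ really is of this order. Your claim that under $n^{\lambda}<k\le n$ this logarithm ``is swallowed by the constant $c_{\lambda}$'' is arithmetically impossible: $\sqrt{\log(2n)/k}\le c_{\lambda}/\sqrt{k}$ forces $\log(2n)\le c_{\lambda}^{2}$, and even your own estimate $\log_{2}(1+n/k)\le(1-\lambda)\log_{2}n$ leaves the divergent factor $\sqrt{(1-\lambda)\log_{2}n}$; the hypothesis $k>n^{\lambda}$ merely rewrites it as the equally divergent $\sqrt{\lambda^{-1}\log_{2}k}$. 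Polynomiality of $k$ in $n$ lets you compare $\log n$ with $k$, but the inequality to be proved compares $\log n$ with a \emph{constant}.

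What a correct proof needs is a construction that improves on the union bound by a factor $\log(1/\varepsilon)\sim\log k$. One such mechanism (due to Alon) is to take $n$ unit vectors in $\mathbb{R}^{m}$, $m\sim\log n/\alpha^{2}$, with pairwise inner products at most $\alpha$, and to apply a Chebyshev-type polynomial of degree $d$ entrywise to their Gram matrix: the rank grows at most like $\binom{m+d}{d}$ while the off-diagonal entries decay geometrically in $d$, and optimizing in $\alpha,d$ yields entrywise error $\varepsilon$ at rank $\lesssim\log n/\bigl(\varepsilon^{2}\log(1/\varepsilon)\bigr)$. Only with this extra $\log(1/\varepsilon)$ does the hypothesis $k>n^{\lambda}$ (i.e. $\log n\le\lambda^{-1}\log k$) give rank $<k$ at $\varepsilon=c_{\lambda}k^{-1/2}$, with $c_{\lambda}\sim\lambda^{-1/2}$, which is also why $c_{\lambda}$ blows up as $\lambda\downarrow0$. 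That this additional idea is unavoidable is shown by the matching rank lower bound: for $k\lesssim\log n$ one has $a_{k}\left(\mbox{id : }\ell_{1}^{n}\rightarrow\ell_{\infty}^{n}\right)\gtrsim1$, so any argument that does not use $k>n^{\lambda}$ \emph{inside the construction} must fail. Finally, your sketch for (ii) is also insufficient: combining $d_{n}\le a_{n}$ (Theorem \ref{thm:relationship a d}) with the entropy asymptotics of Theorem \ref{thm:(Behaviour-of-e)} produces for $p\le1$ only a lower bound of order $n^{-1/p}\le n^{-1}$, far below the claimed order $n^{-1/2}$; the known proofs instead argue directly on the entrywise representation (e.g. via a trace or Frobenius-norm comparison for a rank-deficient perturbation of the identity), or as in \cite[Lem. 3.4.]{JanVybiral}.
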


\begin{proof}
Again we only refer to \cite[Lem. 3.4.]{JanVybiral} for the proof.
\end{proof}
\begin{cor}
\label{cor:Corollar-ak}Let $n\in\mathbb{N}$, $1\leq p,q\leq\infty$
and $k\leq\frac{n}{4}$, then

\[
a_{k}\sim\begin{cases}
1 & \mbox{if }1\leq p<q\leq2\\
\min\left(1,n^{\frac{1}{q}}k^{-\frac{1}{2}}\right) & \mbox{if }1\leq p<2\leq q<p'\\
\min\left(1,n^{\frac{1}{p'}}k^{-\frac{1}{2}}\right) & \mbox{if }1\leq p\leq2\leq p'\leq q\leq\infty\,\mbox{and\,}\left(p,q\right)\neq\left(1,\infty\right)\\
1 & \mbox{if }2\leq p\leq q\leq\infty
\end{cases}
\]

where $p'$ is defined through the equation $\frac{1}{p}+\frac{1}{p'}=1$.
\end{cor}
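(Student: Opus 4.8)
The plan is to deduce the complex-valued asymptotics from the real-valued result of Theorem \ref{thm:real-valued-approximation-numbers} by means of the comparison in Lemma \ref{lem:relationship}. First I would invoke Lemma \ref{lem:relationship} to sandwich the complex approximation numbers between real ones, $a_{2k-1}^{\mathbb{R}} \le a_k \le 2\,a_{2k}^{\mathbb{R}}$, so that it suffices to control $a_{2k-1}^{\mathbb{R}}$ and $a_{2k}^{\mathbb{R}}$. By Theorem \ref{thm:real-valued-approximation-numbers} these are equivalent to $\Psi(n,2k-1,p,q)$ and $\Psi(n,2k,p,q)$; the hypothesis $k \le n/4$ guarantees $2k \le n$, so the theorem applies with room to spare.

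Second, I would show that doubling the index does not change the order of magnitude, so that $\Psi(n,2k,p,q) \sim \Psi(n,2k-1,p,q) \sim \Psi(n,k,p,q)$. Indeed $\Psi$ depends on the index only through $n^{1/q}k^{-1/2}$ (or its $p',q'$ analogue) and through $\sqrt{1-k/n}$. Replacing $k$ by $2k$ alters the first by the bounded factor $\sqrt{2}$, and since $2k \le n/2$ one has $\frac{1}{\sqrt 2} \le \sqrt{1 - 2k/n} \le 1$, so the second stays comparable to $1$. Consequently $a_k \sim \Psi(n,k,p,q)$, and it remains only to simplify $\Psi$ in each regime.

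Third, I would match the four cases of the corollary to the branch structure of $\Psi$ and $\Phi$. Using $k \le n/4$ one has $\sqrt{1-k/n} \sim 1$, and since the exponents placed on $\sqrt{1-k/n}$ in the definition of $\Phi$ depend only on $p,q$, every factor $(\sqrt{1-k/n})^{\theta}$ is likewise $\sim 1$. The main point is then that the term $n^{1/q-1/p}$ inside each $\max\{\cdot\}$ is dominated by its competitor. For example, in the regime $1 \le p < 2 \le q < p'$ one uses the first branch $\Psi = \Phi(n,k,p,q)$ with the middle case of $\Phi$; here $n^{1/q-1/p} \le n^{1/q}k^{-1/2}$ is equivalent to $k \le n^{2/p}$, which holds since $p \le 2$ forces $n^{2/p} \ge n \ge k$, so the maximum collapses to $\min\{1, n^{1/q}k^{-1/2}\}$. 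The regime $p' \le q$ is treated via the second branch $\Psi = \Phi(n,k,q',p')$, where the analogous estimate reduces to $k \le n^{2/q'}$ with $q' \le 2$; the regimes $p < q \le 2$ and $2 \le p < q$ leave a maximum of $n^{1/q-1/p} < 1$ against a factor $\sim 1$, hence $\sim 1$. The diagonal $p = q$ (which occurs only where the claimed value is $1$) is settled directly by property (N$_a$) of Theorem \ref{thm:Properties a}, giving $a_k(\mathrm{id}_{\ell_p^n}) = 1$ for $k \le n$.

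The main obstacle I expect is the bookkeeping of the case distinctions: checking that the four parameter regions of the corollary correspond exactly to the first/second branch of $\Psi$ paired with the right case of $\Phi$, and handling the excluded corner $(p,q) = (1,\infty)$. The boundary $q = p'$, where the two defining branches of $\Psi$ meet, causes no ambiguity, since there $q' = p$ and $p' = q$, so $\Phi(n,k,p,q)$ and $\Phi(n,k,q',p')$ literally coincide; elsewhere the collapses above each reduce to the elementary inequality ``$k \le n$ times a power $\ge 1$'', and monotonicity (M$_a$) covers the remaining limiting values.
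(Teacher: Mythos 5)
Your proposal is correct and takes essentially the same route as the paper: the paper's entire proof is the citation of Theorem \ref{thm:real-valued-approximation-numbers} together with Lemma \ref{lem:relationship}, and your argument simply supplies the details this leaves implicit (the sandwich $a_{2k-1}^{\mathbb{R}}\leq a_{k}\leq 2a_{2k}^{\mathbb{R}}$, the harmlessness of doubling the index when $k\leq\frac{n}{4}$, and the collapse of the maxima in $\Phi$). Your separate treatment of the diagonal $p=q$ via (N$_{a}$) and of the boundary $q=p'$, where the two branches of $\Psi$ coincide, are exactly the right completions of what the paper takes for granted.
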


\begin{proof}
The proof rests upon Theorem \ref{thm:real-valued-approximation-numbers}
and Lemma \ref{lem:relationship}.
\end{proof}
We proceed by extending these results for cases, where $p,q\in\left(0,1\right)$
in the following theorem. (See \cite[Subsect. 3.2.3, Thm. 2]{Edmunds and Triebel}.)
\begin{thm}
Let $n\in\mathbb{N},$ then\\
(i) ~~If $0<p\leq q\leq2$ and $k\leq\frac{n}{4}$, then $a_{k}\sim1$.\\
(ii) ~If $0<q\leq p\leq\infty$ and $n=2k$, then $a_{k}\geq2^{-\frac{1}{q}}n^{\frac{1}{q}-\frac{1}{p}}$.\\
(iii) If $0<p<2<q<p'$ and $k\leq\frac{n}{4},$then $a_{k}\sim\min\left(1,n^{\frac{1}{q}}k^{-\frac{1}{2}}\right)$.
\end{thm}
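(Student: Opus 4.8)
The plan is to treat the three parts separately, reducing each to the estimates already available for $p,q\ge1$ (Corollary \ref{cor:Corollar-ak}), to the auxiliary bounds on $\mbox{id}:\ell_{p}^{n}\to\ell_{\infty}^{n}$ for $0<p\le1$, and to the real/complex transfer of Lemma \ref{lem:relationship}; the genuinely new content is confined to the range $0<p<1$. Part (ii) is immediate: for $0<q\le p\le\infty$ the preceding lemma gives the exact value $a_{k}=\left(n-k\right)^{1/q-1/p}$, so with $n=2k$ one gets $a_{k}=k^{1/q-1/p}=2^{1/p-1/q}n^{1/q-1/p}\ge2^{-1/q}n^{1/q-1/p}$, where the last step uses $1/p\ge0$.

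For the upper bounds in (i) and (iii) I would combine monotonicity with the multiplicativity (P$_{a}$) of Theorem \ref{thm:Properties a}. In (i) the condition $p\le q$ forces $\Vert x\Vert_{q}\le\Vert x\Vert_{p}$, hence $a_{k}\le a_{1}=\Vert\mbox{id}:\ell_{p}^{n}\to\ell_{q}^{n}\Vert=1$; the same reasoning gives $a_{k}\le1$ in (iii). For the competing term $n^{1/q}k^{-1/2}$ in (iii) I would factor $\mbox{id}:\ell_{p}^{n}\to\ell_{q}^{n}=\bigl(\mbox{id}:\ell_{\infty}^{n}\to\ell_{q}^{n}\bigr)\bigl(\mbox{id}:\ell_{p}^{n}\to\ell_{\infty}^{n}\bigr)$, so that (P$_{a}$) yields $a_{k}\le\Vert\mbox{id}:\ell_{\infty}^{n}\to\ell_{q}^{n}\Vert\,a_{k}\bigl(\mbox{id}:\ell_{p}^{n}\to\ell_{\infty}^{n}\bigr)=n^{1/q}a_{k}\bigl(\mbox{id}:\ell_{p}^{n}\to\ell_{\infty}^{n}\bigr)$. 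For $0<p\le1$ the lemma of \cite[Lem. 3.4.]{JanVybiral} bounds the last factor by $c/\sqrt{k}$ on the relevant range $k\gtrsim n^{\lambda}$ (for smaller $k$ the value $\min\{1,n^{1/q}k^{-1/2}\}=1$ is already delivered by $a_{k}\le1$), giving $a_{k}\le c\,n^{1/q}k^{-1/2}$, whereas for $1\le p<2$ the whole two-sided estimate of (iii) is exactly Corollary \ref{cor:Corollar-ak}.

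The lower bounds carry the weight. In (i), because $q\le2$ the embedding $\mbox{id}:\ell_{q}^{n}\to\ell_{2}^{n}$ has norm $1$, so applying (P$_{a}$) to $\mbox{id}:\ell_{p}^{n}\to\ell_{2}^{n}=\bigl(\mbox{id}:\ell_{q}^{n}\to\ell_{2}^{n}\bigr)\bigl(\mbox{id}:\ell_{p}^{n}\to\ell_{q}^{n}\bigr)$ gives $a_{k}\ge a_{k}\bigl(\mbox{id}:\ell_{p}^{n}\to\ell_{2}^{n}\bigr)$, and it then suffices to bound these Hilbert-target numbers below by a constant. For $1\le p\le2$ this is Corollary \ref{cor:Corollar-ak}; for $0<p<1$, and likewise for the lower estimate of $\min\{1,n^{1/q}k^{-1/2}\}$ in (iii), I would pass to real scalars through Lemma \ref{lem:relationship} and run the geometric argument of \cite[Subsect. 3.2.3, Thm. 2]{Edmunds and Triebel}, whose volume ingredient rests on Proposition \ref{pro:Ballvolume} in the same way as the entropy lower bound of Proposition \ref{pro:Triebel Proposition}. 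An in-house alternative would be to feed the sharp entropy estimates of Theorem \ref{thm:(Behaviour-of-e)} into the inequality of Proposition \ref{pro:supremumsdinge}, though recovering the sharp power $k^{-1/2}$ for an individual $a_{k}$ this way is delicate.

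I expect the $0<p<1$ lower bounds to be the main obstacle, and the reason is structural rather than computational. Duality is unavailable, since the conjugate index $p'$ given by $1/p+1/p'=1$ is negative when $p<1$; and the only soft way to compare $\ell_{p}^{n}$ with $\ell_{1}^{n}$, namely factoring through $\mbox{id}:\ell_{1}^{n}\to\ell_{p}^{n}$, costs the norm $n^{1/p-1}$, which grows polynomially in $n$ and destroys any constant lower bound. Consequently the $p<1$ lower bounds cannot be reduced to the established $p\ge1$ cases by monotonicity or factorization, and a direct geometric estimate is genuinely required.
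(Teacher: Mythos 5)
Your part (ii) is correct and legitimately simpler than the paper's own argument: you invoke the exact formula $a_{k}=\left(n-k\right)^{1/q-1/p}$ from the lemma stated immediately before the theorem, whereas the paper gives a self-contained proof via Milman's lemma (an element $x\in\ker T$ with $\vert x_{j}\vert\leq1$ and half the coordinates of modulus $1$). Your upper bounds in (i) and (iii) also work; the paper obtains the (iii) upper bound by factoring through $\ell_{1}^{n}$ and citing Corollary \ref{cor:Corollar-ak}, while you factor through $\ell_{\infty}^{n}$ and use Vyb\'iral's bound $a_{k}\left(\mbox{id : }\ell_{p}^{n}\rightarrow\ell_{\infty}^{n}\right)\leq c/\sqrt{k}$; both routes are sound.

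The genuine gap is exactly where you flagged it: the lower bounds for $0<p<1$ in (i) and (iii). There you have no argument at all — you propose to ``run the geometric argument of [ET, Subsect.~3.2.3, Thm.~2]'', but that reference \emph{is} the theorem being proved (the paper explicitly takes it from there), so this is circular; moreover that argument is not a volume argument and does not rest on Proposition \ref{pro:Ballvolume}. Worse, your closing claim that the $p<1$ lower bounds ``cannot be reduced to the established $p\geq1$ cases'' is precisely wrong: the paper's proof is such a reduction. The idea you are missing is to evaluate at the unit vectors $x^{1},\ldots,x^{n}$, which lie in $\overline{B}_{\ell_{p}^{n}}$ for \emph{every} $p$ and are (up to sign) the extreme points of $\overline{B}_{\ell_{1}^{n}}$. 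For any operator $T$ with $\mbox{rank\,}T<k$ and any $q\geq1$, writing $x=\sum_{r}\lambda_{r}x^{r}$ with $\sum_{r}\vert\lambda_{r}\vert\leq1$,
\[
\Vert\left(\mbox{id}-T\right)x\Vert_{q}\leq\sum_{r}\vert\lambda_{r}\vert\,\Vert\left(\mbox{id}-T\right)x^{r}\Vert_{q}\leq\max_{r}\Vert\left(\mbox{id}-T\right)x^{r}\Vert_{q}\leq\Vert\left(\mbox{id}-T\right):\ell_{p}^{n}\rightarrow\ell_{q}^{n}\Vert,
\]
hence $a_{k}\left(\mbox{id : }\ell_{1}^{n}\rightarrow\ell_{q}^{n}\right)\leq a_{k}\left(\mbox{id : }\ell_{p}^{n}\rightarrow\ell_{q}^{n}\right)$ — the comparison in the direction you believed unavailable, obtained without any norm-bounded factorization. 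The left-hand side is then controlled by Corollary \ref{cor:Corollar-ak}, giving the constant in (i) when $q>1$ and the term $\min\left(1,n^{1/q}k^{-1/2}\right)$ in (iii); for the remaining case $0<p\leq q\leq1$ of (i) the same evaluation trick with the $q$-triangle inequality compares $a_{k}$ with $a_{k}\left(\mbox{id : }\ell_{q}^{n}\rightarrow\ell_{q}^{n}\right)=1$ from (N$_{a}$). Your factorization through $\ell_{2}^{n}$ in (i) is harmless but does not remove the difficulty, since it still requires a constant lower bound for $a_{k}\left(\mbox{id : }\ell_{p}^{n}\rightarrow\ell_{2}^{n}\right)$ with $p<1$. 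A secondary defect: Lemma \ref{lem:relationship} is stated only for $p,q\in\left[1,\infty\right]$, so it cannot be used to pass to real scalars in the range $p<1$.
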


\begin{proof}
In the \emph{first step}, we prove (iii). Let $\beta_{k}:=a_{k}\left(\mbox{id : }\ell_{1}^{n}\rightarrow\ell_{q}^{n}\right)$
and $x^{r}\in\ell_{1}^{n}$ with $j$ - th component $\delta_{jr}$
for $j,r=1,\ldots,n$ (which denotes the Kronecker delta). It is plain
to see, that 

\[
\overline{B}_{\ell_{1}^{n}}=\left\{ x=\sum_{r=1}^{n}\lambda_{r}x^{r}:\,\sum_{r=1}^{n}\vert\lambda_{r}\vert=1\right\} .
\]

So let $T\,:\,\ell_{1}^{n}\rightarrow\ell_{q}^{n}$ be linear and
with $\mbox{rank }T<k$, then we get for $x\in\overline{B}_{\ell_{1}^{n}}$

\begin{eqnarray}
 & x-Tx= & \sum_{r=1}^{n}\lambda_{r}\underset{\tiny=:w^{r}}{\underbrace{\left(x^{r}-Tx^{r}\right)}}=\sum_{r=1}^{n}\lambda_{r}w^{r}.\nonumber \\
\Longrightarrow & \beta_{k}\leq & \sup_{x\in\overline{B}_{\ell_{1}^{n}}}\Vert x-Tx\Vert_{q}\leq\sup_{r=1,\ldots,n}\Vert w^{r}\Vert_{q}\nonumber \\
 & = & \sup_{r=1,\ldots,n}\Vert\left(\left(\mbox{id : }\ell_{p}^{n}\rightarrow\ell_{q}^{n}\right)-T\right)x^{r}\Vert_{q}\nonumber \\
 & \leq & \Vert\left(\left(\mbox{id : }\ell_{p}^{n}\rightarrow\ell_{q}^{n}\right)-T\right)\Vert.\label{eq:estimation1000}
\end{eqnarray}

Thus by taking the infimum over all legitimate operators $T$ and
with Corollary \ref{cor:Corollar-ak} we arrive at

\begin{equation}
\min\left(1,n^{\frac{1}{q}}k^{-\frac{1}{2}}\right)\sim\beta_{k}\leq a_{k}.\label{eq:equivalence}
\end{equation}

On the other hand, we obtain the opposite inequality by the fact that
in this case $p<1$ and the monotonicity of the $\ell_{p}^{n}$ spaces.
In particular we use $\ell_{p}^{n}\hookrightarrow\ell_{1}^{n}$. This
completes step one. 

We proceed with the \emph{second step}, in which we prove (i). At
first, we assume that $0<p<1<q\leq2$ from which we get \eqref{eq:equivalence}
with $1$ on the left-hand side with Corollary \ref{cor:Corollar-ak}.
Furthermore we get $a_{k}\leq\beta_{k}$, since $p<1$ and the monotonicity
of the $\ell_{p}^{n}$ spaces (As above $\ell_{p}^{n}\hookrightarrow\ell_{1}^{n}$).
Hence $a_{k}\sim1$. We now investigate the remaining case $0<p\leq q\leq1$
and set $\beta_{k}=a_{k}\left(\mbox{id : }\ell_{q}^{n}\rightarrow\ell_{q}^{n}\right)$.
As we have shown earlier in Theorem \ref{thm:Properties a} (N$_{a}$),
we know that $\beta_{k}=1$ for those $k$ admissible here. The analogue
to \eqref{eq:equivalence} is 

\begin{eqnarray*}
1 & \leq & \Vert\sum_{r=1}^{n}\lambda_{r}w^{r}\Vert_{q}^{q}\leq\sum_{r=1}^{n}\vert\lambda_{r}\vert^{q}\Vert w^{r}\Vert_{q}^{q}\\
 & \leq & \sup_{r=1,\ldots,n}\Vert w^{r}\Vert_{q}^{q}=\sup_{r=1,\ldots,n}\Vert x^{r}-Tx^{r}\Vert_{q}^{q}\\
 & \leq & \sup_{\Vert x\Vert_{p}\leq1}\Vert\left(\mbox{id : }\ell_{p}^{n}\rightarrow\ell_{q}^{n}\right)-T\Vert.
\end{eqnarray*}
 Hence, by taking the infimum over all legitimate operators $T$,
we get $a_{k}\left(\mbox{id : }\ell_{p}^{n}\rightarrow\ell_{q}^{n}\right)\geq1$.
We conclude the second step by using the monotonicity of the $\ell_{p}^{n}$
spaces again, which finally yields $a_{k}\sim1$.

The third step will be, to prove the last remaining part of this theorem,
which is (ii). Therefore let $T:\,\ell_{p}^{n}\rightarrow\ell_{q}^{n}$
be represented as an $n\mbox{ x }n$ matrix with $\mbox{rank }T<k=\frac{n}{2}$.
We know that $\dim\ker T>k=\frac{n}{2}$ and use V.D. Milman's lemma
(see \cite[Sect. 2.9.,  Lem. 2.9.6.]{Albrecht Pietsch}) . From there,
it follows that there exists an element $x=\left(x_{1},\ldots,x_{n}\right)\in\ker T$
with $\vert x_{j}\vert\leq1$ for all $j=1,\ldots,n$ and, (for example)
$\vert x_{1}\vert=\ldots=\vert x_{n/2}\vert=1$. For this $x$ we
know

\begin{equation}
\Vert x\Vert_{q}\geq\left(n/2\right)^{1/q}\,\,\,\mbox{and}\,\,\,\Vert x\Vert_{p}\leq n^{1/p}.\label{eq:pq norm}
\end{equation}

Therefore we conclude

\[
\Vert x\Vert_{q}=\Vert\left(I-T\right)x\Vert_{q}\leq\Vert\left(I-T\right)\Vert\Vert x\Vert_{p}.
\]

Here $I$ stands for $\left(\mbox{id : }\ell_{p}^{n}\rightarrow\ell_{q}^{n}\right)$,
but since we identified $T$ as a matrix, we do the same with $I$.
By taking the infimum over all such operators $T$ and the above estimate,
we finally get

\[
a_{k}\left(\mbox{id : }\ell_{p}^{n}\rightarrow\ell_{q}^{n}\right)\geq\frac{\Vert x\Vert_{q}}{\Vert x\Vert_{p}}\overset{\eqref{eq:pq norm}}{=}2^{-\frac{1}{q}}n^{\frac{1}{q}-\frac{1}{p}},
\]

which concludes the third step, as well as the theorem.
\end{proof}
As far as approximation numbers of the identity operator between finite
sequence spaces are concerned, there is one last corollary, that we
will add in this bachelor's thesis, which is a conclusion of Corollary
\ref{cor:Corollar-ak}. It is taken from \cite[Cor. 2.2.]{Caetano}.
\begin{cor}
Let $0<p\leq2\leq q<\infty$ (or $1<p\leq2<q=\infty$ ). Then

(i) there exists $c>0$ such that for all $k,n\in\mathbb{N}$

\[
a_{k}\leq cn^{1/\min\left\{ p',q\right\} }k^{-\frac{1}{2}}.
\]

(ii) there exists $c>0$ such that for all $k,n\in\mathbb{N}$ with
$k\leq\frac{1}{4}n^{2/\min\left\{ p',q\right\} }$

\[
a_{k}\geq c.
\]
\end{cor}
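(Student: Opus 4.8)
The plan is to obtain both estimates from the sharp two–sided bound for $a_{k}$ already at our disposal, organising everything around the quantity $s:=\min\{p',q\}$, where in the range $0<p\le 1$ we read $p'=\infty$, so that $s=q$. Writing $\mathrm{id}_{p\to q}$ for $\mbox{id}:\ell_{p}^{n}\to\ell_{q}^{n}$, the first thing I would record is that $p\le 2$ forces $p'\ge 2$ and $q\ge 2$, hence $s\ge 2$; therefore $2/s\le 1$ and $\tfrac14 n^{2/s}\le\tfrac14 n$, so the range admitted in (ii) already lies inside $k\le n/4$. The one input I need is the \emph{core equivalence}
\[
a_{k}\sim\min\bigl(1,\,n^{1/s}k^{-1/2}\bigr)\qquad\text{for }k\le n/4 .
\]
For $1\le p\le 2\le q$ this is precisely Corollary \ref{cor:Corollar-ak}: when $q<p'$ one lands in the line giving $\min(1,n^{1/q}k^{-1/2})$ with $s=q$, when $q\ge p'$ in the line giving $\min(1,n^{1/p'}k^{-1/2})$ with $s=p'$, and the borderline $q=\infty$, $p>1$ is covered by the same third line; in every case this reads $\min(1,n^{1/s}k^{-1/2})$. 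For $0<p<1$ I would reduce to $p=1$: the embedding $\ell_{p}^{n}\hookrightarrow\ell_{1}^{n}$ has norm $\le 1$, so factoring $\mathrm{id}_{p\to q}=\mathrm{id}_{1\to q}\circ\mathrm{id}_{p\to 1}$ and using that precomposition with a norm-$\le 1$ map cannot increase an approximation number gives $a_{k}(\mathrm{id}_{p\to q})\le a_{k}(\mathrm{id}_{1\to q})$, while testing an arbitrary rank-$<k$ operator on the unit vectors $e_{r}$ (exactly as in the first step of the preceding theorem) yields the reverse inequality; since $a_{k}(\mathrm{id}_{1\to q})\sim\min(1,n^{1/q}k^{-1/2})$ by the $p=1$ instance of Corollary \ref{cor:Corollar-ak}, the core equivalence holds here as well, with $s=q$.

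Given the core equivalence, part (i) would be proved by splitting on the size of $k$. For $k\le n/4$ it is immediate, since $a_{k}\le c\min(1,n^{1/s}k^{-1/2})\le c\,n^{1/s}k^{-1/2}$. For $k>n$ we have $\mbox{rank}\,\mathrm{id}_{p\to q}=n<k$, so $a_{k}=0$ by property (R$_{a}$) of Theorem \ref{thm:Properties a} and there is nothing to prove. The intermediate band $n/4<k\le n$ is handled by monotonicity (M$_{a}$): I would bound $a_{k}\le a_{\lfloor n/4\rfloor}\le c\,n^{1/s}\lfloor n/4\rfloor^{-1/2}\le c'\,n^{1/s-1/2}$ and then note that $k\le n$ forces $n^{1/s}k^{-1/2}\ge n^{1/s-1/2}$, so $a_{k}\le c'\,n^{1/s}k^{-1/2}$; the finitely many small $n$ are absorbed into the constant.

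For part (ii) the opening remark shows that the hypothesis $k\le\tfrac14 n^{2/s}$ already guarantees $k\le n/4$, so the core equivalence applies. A one-line computation then gives $n^{1/s}k^{-1/2}\ge n^{1/s}\bigl(\tfrac14 n^{2/s}\bigr)^{-1/2}=2$, whence $\min(1,n^{1/s}k^{-1/2})=1$ and the lower constant of the equivalence produces $a_{k}\ge c$.

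The step I expect to be the real obstacle is the regime $0<p<1$, since there $p'$ is meaningless and Corollary \ref{cor:Corollar-ak} is only stated for $p\ge 1$; the substance there is to justify replacing $p$ by $1$ in both directions and to confirm that the governing scale is $s=q$. Once the core equivalence is secured, the remaining work — the case bookkeeping in (i) and the single inequality in (ii) — is entirely routine.
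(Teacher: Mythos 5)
Your proposal is correct, and part (ii) coincides with the paper's argument, but for part (i) you take a genuinely different route. Where you bridge the band $n/4<k\leq n$ by monotonicity (M$_{a}$) --- stepping down to $k_{0}=\lfloor n/4\rfloor$, invoking the core equivalence there, and comparing $n^{1/s-1/2}$ with $n^{1/s}k^{-1/2}$, absorbing small $n$ into the constants --- the paper instead dilates the \emph{dimension}: it writes $\mbox{id}:\ell_{p}^{n}\rightarrow\ell_{q}^{n}$ as the composition $P\cdot\left(\mbox{id}:\ell_{p}^{4n}\rightarrow\ell_{q}^{4n}\right)\cdot J$, where $J$ pads a vector with $3n$ zeros and $P$ projects back onto the first $n$ coordinates, both of norm at most $1$. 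Properties (M$_{a}$) and (P$_{a}$) of Theorem \ref{thm:Properties a} then give $a_{k}\left(\mbox{id}:\ell_{p}^{n}\rightarrow\ell_{q}^{n}\right)\leq\Vert P\Vert\,\Vert J\Vert\,a_{k}\left(\mbox{id}:\ell_{p}^{4n}\rightarrow\ell_{q}^{4n}\right)$, and since $k\leq n=\tfrac{1}{4}(4n)$, the hypothesis of Corollary \ref{cor:Corollar-ak} is automatically satisfied at dimension $4n$, with no case distinction in $n$ or $k$ beyond the trivial regime $k>n$, where both of you use (R$_{a}$). The dilation trick buys uniformity and clean constants; your monotonicity argument is more elementary (no factorization needed) but requires the extra bookkeeping for $n\leq3$, where $\lfloor n/4\rfloor=0$ --- which you correctly flag as harmless, since there $a_{k}\leq\Vert\mbox{id}\Vert\leq1$ while $n^{1/s}k^{-1/2}$ stays bounded away from $0$. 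A genuine merit of your write-up is the explicit treatment of $0<p<1$: the paper's proof simply invokes Corollary \ref{cor:Corollar-ak}, which is stated only for $1\leq p,q\leq\infty$, even though the statement being proved admits $0<p<1$; your reduction to $p=1$ --- the norm-one embedding $\ell_{p}^{n}\hookrightarrow\ell_{1}^{n}$ in one direction, and testing rank-$<k$ operators on the unit vectors in the other, exactly as in the first step of the preceding theorem --- supplies the justification the paper leaves implicit and correctly identifies the governing index as $s=q$ in that range.
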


\begin{proof}
To prove (i) we will first have a look at $k>n$. As we have shown
in Theorem \ref{thm:Properties a} we then have $a_{k}=0$ and therefore
the required estimate is valid. We now consider the remaining case,
which is $k\leq n$. Our aim is to use Corollary \ref{cor:Corollar-ak}
and therefore we have a look at the composition 

\[
\ell_{p}^{n}\overset{J}{\longrightarrow}\ell_{p}^{4n}\overset{\mbox{id}}{\longrightarrow}\ell_{q}^{4n}\overset{P}{\longrightarrow}\ell_{q}^{n}.
\]

We define $J\left(\xi\right)=\left(\xi_{1},\ldots,\xi_{n},\underset{3n\mbox{ times}}{\underbrace{0,\ldots,0}}\right)$
for $\xi=\left(\xi_{1},\ldots,\xi_{n}\right)\in\ell_{p}^{n}$ as well
as $P\left(\xi\right)=\left(\xi_{1},\ldots,\xi_{n}\right)$ for $\xi\in\ell_{q}^{4n}$.
This yields

\begin{eqnarray*}
a_{k}\left(\mbox{id : }\ell_{p}^{n}\rightarrow\ell_{q}^{n}\right) & = & a_{k+1-1}\left(P\cdot\left(\mbox{id : }\ell_{p}^{4n}\rightarrow\ell_{q}^{4n}\right)\cdot J\right)\\
 & \leq & a_{1}\left(P\right)\cdot a_{k+1-1}\left(\left(\mbox{id : }\ell_{p}^{4n}\rightarrow\ell_{q}^{4n}\right)\cdot J\right)\\
 & \leq & \Vert P\Vert\Vert J\Vert a_{k}\left(\mbox{id : }\ell_{p}^{4n}\rightarrow\ell_{q}^{4n}\right),
\end{eqnarray*}

where we used the properties (M$_{a}$) and (P$_{a}$) of Theorem
\ref{thm:Properties a}. Obviously $P$ and $J$ are bounded operators
and we are now in position to use Corollary \ref{cor:Corollar-ak}.
This amounts to

\[
a_{k}\left(\mbox{id : }\ell_{p}^{n}\rightarrow\ell_{q}^{n}\right)\leq c\cdot n^{1/\min\left\{ p',q\right\} }k^{-\frac{1}{2}},
\]

which is (i). The next step will be proving (ii). Our premise is still
$0<p\leq2\leq q<\infty$ (or completely analogue $1<p\leq q=\infty$).
Therefore we have $\min\left\{ p',q\right\} \geq2$, since the conjugate
index $p'$ is set $p'=\infty$ if $0<p\leq1$. However $k\leq\frac{1}{4}n^{2/\min\left\{ p',q\right\} }$,
amounts to $k\leq\frac{n}{4}$, which brings us again in position
to use Corollary \ref{cor:Corollar-ak}. Furthermore we get 

\[
n^{1/\min\left\{ p',q\right\} }k^{-\frac{1}{2}}\geq n^{1/\min\left\{ p',q\right\} }\cdot\left(\frac{1}{2}n^{2/\min\left\{ p',q\right\} }\right)^{-\frac{1}{2}}=\sqrt{2}>1,
\]

which finally yields $a_{k}\geq c$ for some $c>0$ independent of
$k$ and $n$.

\end{proof}

\section{$\mbox{id : }\ell_{p}^{n}\rightarrow\ell_{q}^{n}$ and Kolmogorov
numbers }

The last remaining numbers in this bachelor's thesis, concerning $\mbox{id : }\ell_{p}^{n}\rightarrow\ell_{q}^{n}$
are Kolmogorov numbers. With our investigation we will mostly follow
the notations of \cite[Sect. 4.]{JanVybiral} and start with a lemma
taken from there. (\cite[Lem. 4.2.]{JanVybiral})
\begin{lem}
Let $1\leq k\leq n<\infty$ and $1\leq p,q\leq\infty$. We define 

\[
\Phi\left(n,k,p,q\right):=\begin{cases}
\left(n-k+1\right)^{\frac{1}{q}-\frac{1}{p}} & \mbox{if }1\leq q\leq p\leq\infty\\
\left(\min\left\{ 1;n^{\frac{1}{q}}k^{-\frac{1}{2}}\right\} \right)^{\frac{\frac{1}{p}-\frac{1}{q}}{\frac{1}{2}-\frac{1}{q}}} & \mbox{if }2\leq p<q\leq\infty\\
\max\left\{ n^{\frac{1}{q}-\frac{1}{p}};\sqrt{1-\frac{k}{n}}^{\frac{\frac{1}{p}-\frac{1}{q}}{\frac{1}{p}-\frac{1}{2}}}\right\}  & \mbox{if }1\leq p<q\leq2\\
\max\left\{ n^{\frac{1}{q}-\frac{1}{p}},\min\left\{ 1,n^{\frac{1}{q}}k^{-\frac{1}{2}}\right\} \cdot\sqrt{1-\frac{k}{n}}\right\}  & \mbox{if }1\leq p<2<q\leq\infty.
\end{cases}
\]

Then $d_{k}\left(\mbox{id : }\ell_{p}^{n}\rightarrow\ell_{q}^{n}\right)\sim\Phi\left(n,k,p,q\right)$
if $q<\infty$, where the constants of equivalence are independent
of $k$ and $n$ but may depend on $p$ and $q$. 

Furthermore there exist $c_{p},C_{p}>0$ such that 

\[
c_{p}\Phi\left(n,k,p,\infty\right)\leq d_{k}\left(\mbox{id : }\ell_{p}^{n}\rightarrow\ell_{\infty}^{n}\right)\leq C_{p}\Phi\left(n,k,p,\infty\right)\left(\log\left(\frac{en}{k}\right)\right)^{3/2}
\]

for $1\leq p\leq\infty$. 
\end{lem}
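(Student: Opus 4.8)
The plan is to reduce everything to the real case at the cost of only a fixed constant, by an argument analogous to Lemma~\ref{lem:relationship}, and then to establish matching upper and lower bounds for $d_{k}:=d_{k}\left(\mbox{id}:\ell_{p}^{n}\rightarrow\ell_{q}^{n}\right)$ separately in each of the four regimes defining $\Phi\left(n,k,p,q\right)$. The two descriptions in Proposition~\ref{pro:alt kolmogorov} are the workhorses: the covering form (i) pairs with the volume estimates of Proposition~\ref{pro:Ballvolume} for the lower bounds, while the quotient form (ii) organises the upper-bound constructions. The basic properties of Theorem~\ref{thm:Properties d} (monotonicity, multiplicativity, $d_{1}=\|\cdot\|$) together with $d_{k}\leq a_{k}$ from Theorem~\ref{thm:relationship a d} let me import most of the work already carried out for approximation numbers.

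For the \emph{upper bounds} I would argue regime by regime. When $1\leq q\leq p\leq\infty$ the coordinate subspace $U=\mbox{span}\left\{ e_{1},\dots,e_{k-1}\right\}$ has $\dim U<k$ and satisfies $\sup_{\|x\|_{p}\leq1}\inf_{y\in U}\|x-y\|_{q}=\|\mbox{id}:\ell_{p}^{n-k+1}\rightarrow\ell_{q}^{n-k+1}\|=\left(n-k+1\right)^{1/q-1/p}$, because the $\ell_{q}$-norm separates over coordinates; this gives the desired upper bound directly, and $d_{k}\leq a_{k}$ confirms it. In the three regimes with $p<q$ I would again use $d_{k}\leq a_{k}$ and feed in the approximation-number asymptotics of Theorem~\ref{thm:real-valued-approximation-numbers} and Corollary~\ref{cor:Corollar-ak} (through Lemma~\ref{lem:relationship}); wherever the stated $\Phi$ coincides with the order of $a_{k}$ this closes the upper bound at once, and the finer $\sqrt{1-k/n}$ refinements come from choosing the approximating subspace spanned by a block of coordinates whose size is tuned to the ratio $k/n$.

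For the \emph{lower bounds} the main engine is the volume method built on Proposition~\ref{pro:Ballvolume}, applied in the spirit of the entropy-number lower bound of Proposition~\ref{pro:Triebel Proposition}. Given a near-optimal covering $\overline{B}_{\ell_{p}^{n}}\subset U+\varepsilon\,\overline{B}_{\ell_{q}^{n}}$ with $\dim U<k$ supplied by Proposition~\ref{pro:alt kolmogorov}(i), one restricts to a complementary subspace of dimension $n-k+1$ and compares volumes there, which forces $\varepsilon$ to be at least of order $\left(n-k+1\right)^{1/q-1/p}$ and thereby settles the regime $q\leq p$. The constant-order terms $n^{1/q-1/p}$ follow from the identity property $d_{k}\left(\overline{B}_{\mathbb{X}},\mathbb{X}\right)=1$ of Remark~\ref{rem:remark-proof} applied to a coordinate block, while the terms $\min\left\{ 1,n^{1/q}k^{-1/2}\right\}$ in the regimes with $p<q$ are produced by a combinatorial fooling set of well-separated sign vectors inside $\overline{B}_{\ell_{p}^{n}}$, exactly as in the lower-bound construction of Lemma~\ref{lem:K=0000FCehn Lemma}.

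The \emph{main obstacle} is the case $q=\infty$, and this is precisely why only the weaker two-sided bound with the extra factor $\left(\log\left(en/k\right)\right)^{3/2}$ is claimed there. The lower bound $c_{p}\Phi\left(n,k,p,\infty\right)$ is still delivered by the volume and fooling-set arguments above, but the clean volume comparison breaks down for the upper bound because $\overline{B}_{\ell_{\infty}^{n}}$ has the wrong volume scaling; one must instead discretise $\overline{B}_{\ell_{p}^{n}}$ into $\ell_{\infty}$-cells and count, via an entropy-type estimate, how many coordinates can simultaneously be large, which is exactly where the logarithmic loss is incurred. Here I would follow the construction in \cite[Lem.~4.2.]{JanVybiral}, since it is this sharp counting, rather than any ingredient already available in the present text, that constitutes the genuinely hard part of the proof.
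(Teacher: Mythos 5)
First, note what the paper itself does here: it gives no proof at all, only the citation ``we refer to \cite[Thm. 1]{Gluskin}'' (following \cite[Lem. 4.2.]{JanVybiral}). Your proposal instead attempts a real proof from the tools in the thesis, and that attempt has a genuine gap in the upper bounds. In the regimes $2\leq p<q\leq\infty$ and $1\leq p<2<q\leq\infty$ with $q>p'$, the inequality $d_{k}\leq a_{k}$ of Theorem \ref{thm:relationship a d} cannot close the upper bound, because there the approximation numbers are of \emph{strictly larger order} than the Kolmogorov asymptotics claimed in the lemma: approximation numbers are self-dual (this is exactly the switch $\Psi\left(n,k,p,q\right)=\Phi\left(n,k,q',p'\right)$ in Theorem \ref{thm:real-valued-approximation-numbers}), whereas Kolmogorov numbers dualize to Gelfand numbers, so the two scales separate once $q>2$. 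Concretely, already for real scalars take $p=2$, $2<q<\infty$ and $k=\lfloor n/2\rfloor$: Theorem \ref{thm:real-valued-approximation-numbers} gives $a_{k}\sim\max\left\{ n^{1/q-1/2},\sqrt{1-k/n}\right\} \sim1$, while the present lemma asserts $d_{k}\sim\min\left\{ 1,n^{1/q}k^{-1/2}\right\} \sim n^{1/q-1/2}\rightarrow0$. So the caveat ``wherever the stated $\Phi$ coincides with the order of $a_{k}$'' excludes precisely the cases that carry the content of the theorem, and no argument routed through $a_{k}$ can recover them.

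Coordinate subspaces cannot rescue this either. If $U$ is spanned by $k-1$ coordinate vectors, then $\sup_{\Vert x\Vert_{p}\leq1}\inf_{y\in U}\Vert x-y\Vert_{q}=\Vert\mbox{id}:\ell_{p}^{n-k+1}\rightarrow\ell_{q}^{n-k+1}\Vert=1$ whenever $p\leq q$ (attained at a unit vector outside the block), no matter how the block size is ``tuned to $k/n$''; such subspaces therefore produce neither the $\min\left\{ 1,n^{1/q}k^{-1/2}\right\} $ terms nor the $\sqrt{1-k/n}$ refinements. The subspaces that do achieve the claimed orders for $q>2$ are random subspaces, and the estimates on norms of random matrices needed to control them are exactly the content of \cite[Thm. 1]{Gluskin}; this hard part is not confined to the endpoint $q=\infty$, as your last paragraph suggests — the logarithmic loss at $q=\infty$ is an additional phenomenon on top of it. There is also a weaker but real problem on the lower-bound side: pairwise-separated sign vectors bound entropy numbers from below (Lemma \ref{lem:K=0000FCehn Lemma}), but not Kolmogorov numbers, since exponentially many well-separated points can lie inside a single low-dimensional subspace; for widths one needs points far from \emph{every} subspace of dimension $<k$, which separation alone does not provide. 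Within the toolkit available in this thesis, the route the paper takes — citing Gluskin — is essentially unavoidable.
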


\begin{proof}
As it is done in \cite[Lem. 4.2.]{JanVybiral}, we refer to \cite[Thm. 1]{Gluskin}.\\
\\
\end{proof}
The above Lemma only contained cases, in which $1\leq p,q\leq\infty$.
We shall now add some estimates which apply to quasi-Banach spaces.
Again, the following Lemma is taken from \cite[Lem. 4.3.]{JanVybiral}.\\

\begin{lem}
If $0<q\leq p\leq\infty$, then there exists a constant $c>0$ such
that 

\[
d_{\left\lceil cn\right\rceil +1}\left(\mbox{id : }\ell_{p}^{2n}\rightarrow\ell_{q}^{2n}\right)\gtrsim n^{\frac{1}{q}-\frac{1}{p}},\,\,\,\mbox{for }k\in\mathbb{N}
\]

where $\left\lceil cn\right\rceil $ denotes the upper integer part
of $cn$.
\end{lem}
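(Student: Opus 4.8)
The plan is to prove the bound by passing to the dual description of the Kolmogorov numbers. First I would dispose of the complex case exactly as elsewhere in this thesis, writing $N=2n$ and identifying $\mathbb{C}^{N}$ with $\mathbb{R}^{2N}$; it then suffices to argue for real sequence spaces, and I would first treat the genuinely normed range $1\le q\le p\le\infty$. By Proposition \ref{pro:alt kolmogorov}(ii), applied to $T=\mathrm{id}$, one has
\[
d_{k}\bigl(\mathrm{id}:\ell_{p}^{2n}\to\ell_{q}^{2n}\bigr)=\inf_{\dim V<k}\bigl\|Q_{V}^{\ell_{q}^{2n}}\bigr\|=\inf_{\dim V<k}\ \sup_{\|x\|_{p}\le1}\ \inf_{v\in V}\|x-v\|_{q}.
\]
The decisive step is to rewrite the inner distance by duality of the quotient norm: for a subspace $V\subset\ell_{q}^{2n}$ one has $\inf_{v\in V}\|x-v\|_{q}=\sup\{\langle x,\xi\rangle:\xi\in V^{\perp},\ \|\xi\|_{q'}\le1\}$, where $V^{\perp}$ is the Euclidean annihilator and $1/q+1/q'=1$. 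Substituting this, interchanging the two suprema, and using $\sup_{\|x\|_{p}\le1}\langle x,\xi\rangle=\|\xi\|_{p'}$, the formula becomes
\[
d_{k}\bigl(\mathrm{id}:\ell_{p}^{2n}\to\ell_{q}^{2n}\bigr)=\inf_{\dim V<k}\ \sup\bigl\{\|\xi\|_{p'}:\xi\in V^{\perp},\ \|\xi\|_{q'}\le1\bigr\},
\]
so that a lower bound for $d_{k}$ reduces to showing that on every subspace $W:=V^{\perp}$ the ratio $\|\xi\|_{p'}/\|\xi\|_{q'}$ can be made large.

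Next I would exploit that $\dim W=2n-\dim V\ge2n-k+1$, so that choosing the constant (say $c=1$, whence $k=\lceil n\rceil+1$) forces $\dim W\gtrsim n$. To produce a suitable $\xi$ I would apply V.\ D.\ Milman's lemma \cite[Sect. 2.9., Lem. 2.9.6.]{Albrecht Pietsch}, already used in this thesis, to the subspace $W$: it yields $\xi\in W$ with $\|\xi\|_{\infty}=1$ and at least $\dim W$ coordinates of modulus one (the precise constant is immaterial, since I only need $\gtrsim n$ such coordinates). Consequently $\|\xi\|_{p'}\ge(\dim W)^{1/p'}\gtrsim n^{1/p'}$, while $\|\xi\|_{q'}\le(2n)^{1/q'}$ because every coordinate is bounded by $1$. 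After normalising, $\sup\{\|\xi\|_{p'}:\xi\in W,\ \|\xi\|_{q'}\le1\}\gtrsim n^{1/p'}(2n)^{-1/q'}\sim n^{1/p'-1/q'}$. Since $1/p'-1/q'=(1-1/p)-(1-1/q)=1/q-1/p$ and the estimate is uniform in $V$, taking the infimum over all $V$ with $\dim V<k$ gives $d_{k}\gtrsim n^{1/q-1/p}$, as claimed. The degenerate cases $p=1$ or $q=1$, where a conjugate index is infinite, are read off directly from the same computation.

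The hard part will be the genuinely quasi-Banach range $0<q<1$, where the argument above collapses, since both the duality of the quotient norm and Milman's lemma rely on $\|\cdot\|_{q}$ being a true norm. Here I would instead attempt a direct volume comparison in the spirit of Proposition \ref{pro:Triebel Proposition}: by Proposition \ref{pro:alt kolmogorov}(i) a value $d_{k}<\varepsilon$ produces a subspace $U$ with $\dim U<k$ and $\overline{B}_{\ell_{p}^{2n}}\subset U+\varepsilon\overline{B}_{\ell_{q}^{2n}}$; passing to the quotient $\mathbb{R}^{2n}/U$, of dimension $\gtrsim n$, and comparing the resulting quotient-ball volumes through the explicit estimates of Proposition \ref{pro:Ballvolume} should force $\varepsilon\gtrsim n^{1/q-1/p}$. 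The genuine obstacle is that factoring out an arbitrary subspace $U$ no longer corresponds to a clean section or projection of a convex body once $q<1$, so the volume bookkeeping is delicate; should this prove too technical, the statement can be secured by invoking the cited results \cite[Lem. 4.3.]{JanVybiral} and \cite[Thm. 1]{Gluskin} directly, exactly as was done for the preceding lemma.
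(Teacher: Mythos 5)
Your treatment of the normed range $1\le q\le p\le\infty$ is correct, and it takes a genuinely different route from the paper: the paper simply quotes Pietsch's lemma on widths of identity maps, whereas you in effect reprove it, dualizing $d_{k}$ via Proposition \ref{pro:alt kolmogorov}(ii) into a Gelfand-type quantity on $W=V^{\perp}$ and then producing a good vector by Milman's lemma. The bookkeeping is right: $\dim W\ge 2n-k+1\gtrsim n$, so $\Vert\xi\Vert_{p'}\gtrsim n^{1/p'}$ while $\Vert\xi\Vert_{q'}\le(2n)^{1/q'}$, and $1/p'-1/q'=1/q-1/p$, uniformly in $V$. That part stands on its own.

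The genuine gap is the range $0<q<1$, which is exactly why this lemma appears in a thesis on quasi-Banach spaces. Your direct volume sketch does not close: after $\overline{B}_{\ell_{p}^{2n}}\subset U+\varepsilon\overline{B}_{\ell_{q}^{2n}}$, the only cheap upper bound for the quotient image of the $q$-ball is $Q\overline{B}_{\ell_{q}^{2n}}\subseteq Q\overline{B}_{\ell_{1}^{2n}}$, and this discards all dependence on $q$, so at best it reproduces the $q=1$ bound $n^{1-1/p}$ rather than $n^{1/q-1/p}$; controlling $\mathrm{vol}\bigl(Q\overline{B}_{\ell_{q}^{2n}}\bigr)$ for an \emph{arbitrary} subspace $U$ is precisely the delicate quasi-convex (reverse Brunn--Minkowski type) problem you flag, and your fallbacks do not rescue it: Gluskin's theorem is a Banach-space result ($1\le p,q\le\infty$), and Vyb\'{i}ral's Lemma 4.3 \emph{is} the statement being proved. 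What the paper does instead — and what your proposal is missing — is a transfer argument: volume comparison is carried out only for \emph{entropy} numbers (Proposition \ref{pro:Triebel Proposition}, where no quotients occur, one just compares $\mathrm{vol}\,\overline{B}_{\ell_{p}^{2n}}$ with $2^{k}\varepsilon^{4n}\,\mathrm{vol}\,\overline{B}_{\ell_{q}^{2n}}$), giving $e_{k}\gtrsim 2^{-k/4n}(4n)^{1/q-1/p}$, and then the Carl-type inequality of Proposition \ref{pro:supremumsdinge2}, $\sup_{1\le k\le n}k^{\alpha}e_{k}(T)\le c\,\sup_{1\le k\le n}k^{\alpha}d_{k}(T)$, converts this into $\sup_{1\le k\le n}k^{\alpha}d_{k}\gtrsim n^{\alpha}n^{1/q-1/p}$. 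Hence some index $k_{n}\le n$ satisfies $k_{n}^{\alpha}d_{k_{n}}\gtrsim n^{\alpha}n^{1/q-1/p}$; since $d_{k}\le\Vert\mathrm{id}\Vert=(2n)^{1/q-1/p}$, this forces $k_{n}\ge cn$, and the monotonicity of Kolmogorov numbers yields $d_{\lceil cn\rceil+1}\ge d_{k_{n}}\gtrsim n^{1/q-1/p}$. Without this entropy-to-Kolmogorov transfer (or a completed quasi-convex volume argument, which would essentially reconstruct it), your proof covers only $1\le q\le p\le\infty$.
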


\begin{proof}
First, we consider the case in which $q\geq1$. Then we have a special
case of \cite[Subsect 11.11.4., Lem. 1.]{Albrecht Pietsch 2} , which
states

\[
d_{n}\left(\mbox{id : }\ell_{p}^{m}\rightarrow\ell_{q}^{m}\right)\gtrsim\left(m-n+1\right)^{\frac{1}{q}-\frac{1}{p}}\,\,\,\mbox{for }1\leq n\leq m.
\]

Since this argument does not stand for $q<1$ we recall two facts.
The first is Proposition \ref{pro:Triebel Proposition}, where we
have shown, that 

\[
e_{k}\left(\mbox{id : }\ell_{p}^{2n}\rightarrow\ell_{q}^{2n}\right)\geq c_{1}\cdot2^{-\frac{k}{4n}}\left(4n\right){}^{\frac{1}{p}-\frac{1}{q}}
\]

for some constant $c_{1}>0$ depending only of $q$ and $p$. The
second fact is Proposition \ref{pro:supremumsdinge2} with which we
derive that

\[
c_{2}\cdot n^{\alpha}n^{\frac{1}{q}-\frac{1}{p}}\leq\sup_{1\leq k\leq n}k^{\alpha}d_{k}\left(\mbox{id : }\ell_{p}^{2n}\rightarrow\ell_{q}^{2n}\right).
\]

That means, that for every $n\in\mathbb{N}$ there exists $k_{n}\leq n$
such that

\[
c_{2}\cdot n^{\alpha}n^{\frac{1}{q}-\frac{1}{p}}\leq k_{n}^{\alpha}d_{k_{n}}\left(\mbox{id : }\ell_{p}^{2n}\rightarrow\ell_{q}^{2n}\right).
\]

Furthermore there exists a constant $c\in(0,1]$, such that for all
$n\in\mathbb{N}$ $n\geq k\geq cn$. Combining this conclusion with
the preceding estimate finally amounts to

\[
c_{2}\cdot n^{\frac{1}{q}-\frac{1}{p}}\leq d_{\left[cn\right]+1}\left(\mbox{id : }\ell_{p}^{2n}\rightarrow\ell_{q}^{2n}\right).
\]
\\
\\
\end{proof}

\part{Relationship to Spectral Theory}

\section{Preliminary Considerations}

This last part will give a connection between entropy numbers as well
as approximation numbers and eigenvalues of compact operators of infinitely
dimensional Hilbert spaces by the inequalities of Carl and Weyl.  But
when it comes to the relationship between the here considered quantities,
we cannot avoid certain definitions.\\

\begin{defn}
Let $\mathbb{X}$ be an arbitrary, complex quasi-Banach space and
$T\in\mathcal{L}\left(\mathbb{X}\right)$. We then call

(i) $\varrho\left(T\right)=\left\{ \lambda\in\mathbb{C}\,:\,\exists\left(T-\lambda\mbox{id}_{\mathbb{X}}\right)^{-1}\in\mathcal{L}\left(\mathbb{X}\right)\right\} $
the resolvent set of $T$.

(ii) $\sigma\left(T\right)=\mathbb{C}\backslash\varrho\left(T\right)$
the spectrum of $T$.

(iii) $\lambda$ an eigenvalue of $T$, if there exists an element
$x\in\mathbb{X}$, $x\neq0$ with $Tx=\lambda x$.

(iv) $r(T)=\lim_{n\rightarrow\infty}\sqrt[n]{\Vert T^{n}\Vert}$ the
spectral radius of $T$.
\end{defn}

A result, following from this definition, is the next proposition.
To maintain the intended scope of this bachelor's thesis we shall
only give the statement without proving it. A proof however can be
found in \cite[Sect. 1.2., Thm.]{Edmunds and Triebel}.
\begin{prop}
\label{pro:Eigenvalues}Let $\mathbb{X}$ be a complex infinite-dimensional
quasi-Banach space and $T\in\mathcal{K}\left(\mathbb{X}\right)$.
Then the spectrum $\sigma\left(T\right)$ consists only of $\left\{ 0\right\} $
and at most countably infinite number of eigenvalues of finite algebraic
multiplicity, which accumulate only at $0$. That is

\[
\sigma\left(T\right)=\left\{ 0\right\} \cup\left\{ \left(\lambda_{k}\right)_{k\in\mathbb{N}}\subset\mathbb{C}\,:\,\lambda_{k}\neq0,\,\lambda_{k}\mbox{ eigenvalue of }T,\,\dim N\left(T-\lambda_{k}\mbox{id}\right)<\infty\right\} .
\]
\end{prop}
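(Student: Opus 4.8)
The plan is to reconstruct the Riesz--Schauder theory for the compact operator $T$, adapted to the quasi-Banach setting by passing to an equivalent $\varrho$-norm via Theorem \ref{thm:Normequivalence} whenever the quasi-triangle inequality would otherwise obstruct an estimate. I would organise the argument around the operators $S_\lambda := T - \lambda\,\mbox{id}_{\mathbb{X}}$ for $\lambda \neq 0$ and establish four facts in turn: that $0 \in \sigma(T)$, that each nonzero $\lambda \in \sigma(T)$ is an eigenvalue, that the associated generalized eigenspace is finite-dimensional, and finally that the eigenvalues can accumulate only at $0$.

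First I would dispose of $0 \in \sigma(T)$: if $0 \in \varrho(T)$ then $T$ is invertible in $\mathcal{L}(\mathbb{X})$, so $\mbox{id}_{\mathbb{X}} = T^{-1}T$ would be compact, forcing $\overline{B}_{\mathbb{X}}$ to be relatively compact; by the Riesz lemma (realised here through the best-approximation result of Theorem \ref{thm:bestapproximation}) this happens only when $\dim\mathbb{X} < \infty$, contradicting the hypothesis. The same circle of ideas shows that for $\lambda \neq 0$ the kernel $N(S_\lambda)$ is finite-dimensional, since $T$ acts on it as $\lambda\,\mbox{id}$ and a compact multiple of the identity forces a finite-dimensional space.

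Next I would run the classical ascent/descent argument. Using the Riesz lemma on the increasing chains $N(S_\lambda) \subseteq N(S_\lambda^2) \subseteq \cdots$ and the decreasing chains $\mathcal{R}(S_\lambda) \supseteq \mathcal{R}(S_\lambda^2) \supseteq \cdots$, one shows both must stabilize after finitely many steps (otherwise compactness of $T$ is violated, exactly as in the accumulation argument below). This yields a finite ascent/descent $p$ and the $T$-invariant decomposition $\mathbb{X} = N(S_\lambda^p) \oplus \mathcal{R}(S_\lambda^p)$, where $N(S_\lambda^p)$ is finite-dimensional---this is precisely the finite algebraic multiplicity. In particular, if $S_\lambda$ is injective then $p=0$, so $\mathcal{R}(S_\lambda) = \mathbb{X}$ and $S_\lambda$ is bijective; the open mapping theorem for complete metrizable (quasi-Banach) spaces then yields a bounded inverse. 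This is the Fredholm alternative: $S_\lambda$ fails to be invertible precisely when $\lambda$ is an eigenvalue, so every nonzero point of $\sigma(T)$ is an eigenvalue.

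The hard part, and the step I would treat most carefully, is the accumulation statement. Suppose for contradiction there are distinct eigenvalues $\lambda_k \to \mu$ with $\vert\mu\vert \geq \delta > 0$ and eigenvectors $x_k$; these are linearly independent, so the spaces $M_n := \mbox{span}\{x_1,\dots,x_n\}$ strictly increase. Choosing $y_n \in M_n$ with $\Vert y_n\Vert = 1$ and distance to $M_{n-1}$ bounded below (again via Theorem \ref{thm:bestapproximation}), I would exploit the inclusion $S_{\lambda_n}(M_n) \subseteq M_{n-1}$ to show that for $m < n$ the vector $\lambda_n^{-1}Ty_n - \lambda_m^{-1}Ty_m$ equals $y_n$ modulo $M_{n-1}$, so its $\varrho$-norm stays bounded below. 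Since $(\lambda_n^{-1}y_n)$ is bounded, compactness of $T$ would force a convergent subsequence of $(\lambda_n^{-1}Ty_n)$, a contradiction. This rules out nonzero accumulation points, so for each $\delta>0$ only finitely many eigenvalues exceed $\delta$ in modulus, leaving at most countably many in total and accumulation only at $0$, which completes the proof.
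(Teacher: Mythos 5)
Your proposal is correct in outline, but it is worth noting that the paper does not prove this proposition at all: its ``proof'' is the single line ``Without proof,'' deferring entirely to \cite[Sect. 1.2., Thm.]{Edmunds and Triebel}. So you are not diverging from the paper's argument; you are supplying one where the paper supplies none, and what you reconstruct is essentially the Riesz--Schauder theory that the cited reference itself carries out in the quasi-Banach setting. Your four-part structure is the standard and correct one: $0\in\sigma(T)$ because otherwise $\mbox{id}_{\mathbb{X}}=T^{-1}T$ would be compact on an infinite-dimensional space; finite-dimensional kernels and generalized eigenspaces because a compact operator that is invertible on an invariant subspace forces that subspace to be finite-dimensional; the Fredholm alternative via stabilization of the kernel and range chains plus the open mapping theorem (valid here, since a quasi-Banach space is a complete metrizable topological vector space, e.g.\ via the metric $d(x,y)=\Vert x-y\Vert_{0}^{\varrho}$ coming from Theorem \ref{thm:Normequivalence}); and the accumulation argument with eigenvectors of distinct eigenvalues, where your separation computation $\lambda_{n}^{-1}Ty_{n}-\lambda_{m}^{-1}Ty_{m}\in y_{n}+M_{n-1}$ is exactly right and survives the passage to a $\varrho$-norm.

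Two technical points deserve more care than your sketch gives them. First, the distance arguments on the descent chain $\mathcal{R}\left(S_{\lambda}\right)\supseteq\mathcal{R}\left(S_{\lambda}^{2}\right)\supseteq\ldots$ require these ranges to be \emph{closed} subspaces (otherwise the distance of a unit vector to the next range can vanish), and closedness of $\mathcal{R}\left(T-\lambda\mbox{id}\right)$ for compact $T$ is itself a nontrivial step of Riesz theory: one quotients by the finite-dimensional kernel, shows preimages can be chosen bounded by a compactness contradiction, and extracts a convergent subsequence of $Tx_{n}$. You use this implicitly but never establish it. Second, Theorem \ref{thm:bestapproximation} only produces best approximations from \emph{finite-dimensional} subspaces, so it realises the Riesz lemma for the kernel chain and for the spans $M_{n}$ in the accumulation argument, but not for the ranges, which are in general infinite-dimensional; there you need the classical Riesz lemma for closed proper subspaces. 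Fortunately that lemma transfers verbatim to quasi-normed spaces, because its proof uses only the homogeneity of the quasi-norm and the infimum definition of the distance, never the triangle inequality --- but this is precisely the kind of verification the quasi-Banach setting demands, and it should be said explicitly.
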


\begin{proof}
Without proof. (See reference above.)
\end{proof}
Because of this proposition, we can construct a sequence out of all
non-zero eigenvalues $\left(\lambda_{k}\right)_{k\in\mathbb{N}}$,
such that 

\begin{equation}
\left|\lambda_{1}\left(T\right)\right|\geq\left|\lambda_{2}\left(T\right)\right|\geq\ldots\geq0,\label{eq:sequence}
\end{equation}

where we repeated and ordered the $\lambda_{k}$ according to their
algebraic multiplicity. If $T$ has only $m\left(<\infty\right)$
distinct eigenvalues and $M$ is the sum of their algebraic multiplicities,
then we simply put $\lambda_{n}\left(T\right)=0$ for every $n>m$.
From now on, we will refer to this ordered sequence as the \emph{eigenvalue
sequence of $T$.}

\section{The Inequality of Carl}

The perhaps most useful connection to the here concerned quantities
is the following inequality, which was first stated in \cite{Carl=000026Triebel}.
Later on it was extended to fit in the context of quasi-Banach spaces
by \cite{Edmunds and Triebel}.
\begin{thm}
\label{thm:Carl Inequality}(Inequality of Carl)

Let $\mathbb{X}$ be an arbitrary complex quasi-Banach space and $T\in\mathcal{K}\left(\mathbb{X}\right)$
with its eigenvalue sequence $\lambda_{1}\left(T\right),\lambda_{2}\left(T\right),\ldots,\lambda_{n}\left(T\right),\ldots$.
Then

\[
\left(\prod_{m=1}^{k}\vert\lambda_{m}\left(T\right)\vert\right)^{\frac{1}{k}}\leq\inf_{n\in\mathbb{N}}2^{\frac{n}{2k}}e_{n}\left(T\right)\,\mbox{for }\,k\in\mathbb{N}.
\]
\end{thm}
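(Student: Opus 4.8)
The plan is to reduce the infinite-dimensional problem to a finite-dimensional volume estimate on the invariant subspace carrying the leading eigenvalues, and then to remove the resulting projection constant by iterating $T$. First I would invoke Proposition \ref{pro:Eigenvalues} together with the Riesz theory of compact operators to produce a $T$-invariant subspace $M\subset\mathbb{X}$ with $\dim M=k$ on which the restriction $S:=T|_{M}$ has eigenvalues exactly $\lambda_{1}(T),\ldots,\lambda_{k}(T)$; when the index $k$ falls inside a multiplicity block one first takes the bounded Riesz projection onto the slightly larger spectral subspace and then passes to a $k$-dimensional invariant subspace inside it via a flag of the triangularised restriction. This simultaneously supplies a bounded projection $P\,:\,\mathbb{X}\to M$ with $P|_{M}=\mathrm{id}_{M}$, so that $S=PTJ$, where $J\,:\,M\hookrightarrow\mathbb{X}$ is the norm-one inclusion. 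Applying the multiplicativity (P$_{e}$) and monotonicity (M$_{e}$) of Theorem \ref{thm:(Properties-of-e)} with one factor of index $1$ then gives $e_{n}(S)\le\|P\|\,e_{n}(T)$ for every $n$.

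The heart of the argument is a volume estimate on $M$, which I identify with $\mathbb{R}^{2k}$ following the real/complex convention fixed before Proposition \ref{pro:Ballvolume}. Fix $n$ and let $\varepsilon>e_{n}(S)$, so that $S(\overline{B}_{M})$ is covered by $2^{n-1}$ translates of $\varepsilon\overline{B}_{M}$. Comparing Lebesgue $2k$-measures yields $\mathrm{vol}\,S(\overline{B}_{M})\le 2^{n-1}\varepsilon^{2k}\,\mathrm{vol}\,\overline{B}_{M}$. Since $S$ is complex-linear its real Jacobian equals $|\det_{\mathbb{C}}S|^{2}$, and $\det_{\mathbb{C}}S=\prod_{m=1}^{k}\lambda_{m}(T)$, so that $\mathrm{vol}\,S(\overline{B}_{M})=\bigl(\prod_{m=1}^{k}|\lambda_{m}(T)|\bigr)^{2}\mathrm{vol}\,\overline{B}_{M}$. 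Cancelling the finite and positive volume of $\overline{B}_{M}$, letting $\varepsilon\downarrow e_{n}(S)$ and taking $k$-th roots produces the finite-dimensional inequality
\[
\Bigl(\prod_{m=1}^{k}|\lambda_{m}(T)|\Bigr)^{1/k}\le 2^{(n-1)/(2k)}\,e_{n}(S).
\]

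To dispose of the factor $\|P\|$ I would rerun these two steps with $T$ replaced by its $N$-th power. Since $M$ is $T$-invariant, $S^{N}=(T|_{M})^{N}=PT^{N}J$ and $\det_{\mathbb{C}}S^{N}=\prod_{m=1}^{k}\lambda_{m}(T)^{N}$, so the volume estimate and the projection bound combine to $\bigl(\prod_{m=1}^{k}|\lambda_{m}(T)|\bigr)^{N/k}\le 2^{(n-1)/(2k)}\|P\|\,e_{n}(T^{N})$. Choosing $n=N(j-1)+1$ and iterating (P$_{e}$) in the form $e_{N(j-1)+1}(T^{N})\le e_{j}(T)^{N}$, then taking $N$-th roots, yields $\bigl(\prod_{m=1}^{k}|\lambda_{m}(T)|\bigr)^{1/k}\le 2^{(j-1)/(2k)}\|P\|^{1/N}e_{j}(T)$. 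Letting $N\to\infty$ sends $\|P\|^{1/N}\to1$, and since $2^{(j-1)/(2k)}\le 2^{j/(2k)}$ I arrive at $\bigl(\prod_{m=1}^{k}|\lambda_{m}(T)|\bigr)^{1/k}\le 2^{j/(2k)}e_{j}(T)$ for every $j\in\mathbb{N}$; taking the infimum over $j$ is exactly the assertion.

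The step I expect to be the main obstacle is the reduction in the first paragraph: justifying, in the quasi-Banach setting, that the leading $k$ eigenvalues are carried by a genuinely $k$-dimensional invariant subspace admitting a bounded projection, especially when $|\lambda_{k}|=|\lambda_{k+1}|$. The volume computation itself is routine once one commits to the real picture $\mathbb{C}^{k}\cong\mathbb{R}^{2k}$ and the identity $\det_{\mathbb{R}}=|\det_{\mathbb{C}}|^{2}$; the genuinely clever point is the passage to $T^{N}$, since it is precisely the $N$-th root that launders the uncontrolled projection norm into the harmless factor $\|P\|^{1/N}\to1$.
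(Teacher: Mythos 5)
The paper does not actually prove Theorem \ref{thm:Carl Inequality}: its ``proof'' is just the reference to \cite[Subsect. 1.3.4., Thm.]{Edmunds and Triebel}. Measured against that cited argument (which goes back to \cite{Carl=000026Triebel}), your proposal is essentially a correct reconstruction of it: reduction to a $k$-dimensional $T$-invariant subspace $M$ carrying $\lambda_{1}(T),\ldots,\lambda_{k}(T)$, a volume comparison in $M\cong\mathbb{R}^{2k}$ via $\det{}_{\mathbb{R}}=\vert\det{}_{\mathbb{C}}\vert^{2}$, and the power trick ($T\mapsto T^{N}$, the iterated form $e_{N(j-1)+1}(T^{N})\leq e_{j}(T)^{N}$ of (P$_{e}$), $N$-th roots, $N\rightarrow\infty$) to launder the constant. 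The arithmetic checks out; you even obtain the slightly sharper exponent $2^{(j-1)/(2k)}$. The one organizational difference is how the covering is transferred to $M$: you work with the intrinsic entropy numbers $e_{n}(T|_{M})$ and pay the price $\Vert P\Vert$ via $T|_{M}=PTJ$, whereas one can instead intersect an ambient covering of $T(\overline{B}_{\mathbb{X}})$ with $M$ and re-center --- if $y_{i}+\varepsilon\overline{B}_{\mathbb{X}}$ meets $M$, pick $z_{i}$ in the intersection, so that the whole intersection lies in $z_{i}+2C_{\mathbb{X}}\varepsilon\left(\overline{B}_{\mathbb{X}}\cap M\right)$ --- paying $2C_{\mathbb{X}}$ instead. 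Both constants disappear in the same limit, but the second route needs no projection at all, which matters for the reason below.

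You correctly identified the projection as the delicate point, and as written it is the one assertion in your sketch that still needs an argument. In a quasi-Banach space, bounded projections onto finite-dimensional subspaces need not exist at all ($L_{p}[0,1]$ with $0<p<1$ has trivial dual, hence admits no nonzero bounded finite-rank projection), and the contour-integral construction of the Riesz projection is also unavailable, since there is no workable vector-valued Cauchy/integration theory in this setting. So neither Hahn--Banach nor holomorphic functional calculus can produce your $P$. What does produce it is the Riesz decomposition itself: for the (nonzero) eigenvalues involved one has $\mathbb{X}=E\oplus G$, where $E$ is the finite-dimensional sum of the root spaces and $G$ is a closed $T$-invariant complement; the standard proof of this (Riesz lemma, closedness of ranges, finite ascent and descent) uses only completeness and the open-mapping/closed-graph theorems, which hold in quasi-Banach spaces because they are complete metrizable topological vector spaces, and it is exactly the theory behind Proposition \ref{pro:Eigenvalues} as developed in \cite[Sect. 1.2.]{Edmunds and Triebel}. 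The projection onto $E$ along $G$ is then bounded by the closed graph theorem, and you compose it with any (automatically bounded) linear projection of $E$ onto your flag subspace $M$. Two smaller repairs: state the reduction to $\lambda_{k}(T)\neq0$ --- if $\lambda_{k}(T)=0$ the left-hand side vanishes and there is nothing to prove, and your construction of $M$ tacitly requires nonzero eigenvalues, since root spaces exist only for those; and, since quasi-norm balls need not be Borel sets, run the volume comparison with Lebesgue outer measure (countably subadditive, translation invariant, and transforming by $\vert\det\vert$ under linear maps), which changes nothing else in the argument.
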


\begin{proof}
As we have mentioned above, the proof can be found in \cite[Subsect. 1.3.4., Thm.]{Edmunds and Triebel}.
\end{proof}
From this useful theorem, we can draw an immediate conclusion, which
is the following corollary.
\begin{cor}
Let $T$ be as above. For all $k\in\mathbb{N}$ we have

\[
\left|\lambda_{k}\left(T\right)\right|\leq\sqrt{2}e_{k}\left(T\right)
\]
\end{cor}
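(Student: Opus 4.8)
The plan is to derive this corollary directly from the Inequality of Carl (Theorem \ref{thm:Carl Inequality}), combining just two elementary observations: the monotonicity of the eigenvalue sequence and a single convenient choice of the free index in Carl's infimum. No new machinery is required; the whole argument is essentially a specialization of the theorem just proved.

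First I would exploit the ordering of the eigenvalue sequence. By construction \eqref{eq:sequence} we have $\left|\lambda_{1}\left(T\right)\right|\geq\left|\lambda_{2}\left(T\right)\right|\geq\ldots\geq0$, so for each $m\in\left\{1,\ldots,k\right\}$ the factor $\left|\lambda_{m}\left(T\right)\right|$ is at least $\left|\lambda_{k}\left(T\right)\right|$. Multiplying these $k$ inequalities together yields
\[
\left|\lambda_{k}\left(T\right)\right|^{k}\leq\prod_{m=1}^{k}\left|\lambda_{m}\left(T\right)\right|,
\]
and taking the $k$-th root gives the lower bound
\[
\left|\lambda_{k}\left(T\right)\right|\leq\left(\prod_{m=1}^{k}\left|\lambda_{m}\left(T\right)\right|\right)^{\frac{1}{k}}
\]
for the geometric mean that appears on the left-hand side of Carl's inequality.

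Next I would feed this into Theorem \ref{thm:Carl Inequality}. That theorem bounds the geometric mean above by $\inf_{n\in\mathbb{N}}2^{\frac{n}{2k}}e_{n}\left(T\right)$. Since this is an infimum over all natural numbers $n$, it is in particular dominated by the value obtained for the specific choice $n=k$, namely $2^{\frac{k}{2k}}e_{k}\left(T\right)=\sqrt{2}\,e_{k}\left(T\right)$. Chaining the two estimates together then gives
\[
\left|\lambda_{k}\left(T\right)\right|\leq\left(\prod_{m=1}^{k}\left|\lambda_{m}\left(T\right)\right|\right)^{\frac{1}{k}}\leq\inf_{n\in\mathbb{N}}2^{\frac{n}{2k}}e_{n}\left(T\right)\leq\sqrt{2}\,e_{k}\left(T\right),
\]
which is precisely the claimed inequality.

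There is no genuine obstacle here, since all the analytic content has already been absorbed into Theorem \ref{thm:Carl Inequality}; the corollary is a purely formal consequence. The only point demanding any care is remembering that the eigenvalues are labeled according to decreasing modulus and repeated by algebraic multiplicity, so that the passage from $\left|\lambda_{k}\left(T\right)\right|$ to the geometric mean is legitimate — and that the choice $n=k$ is permissible precisely because $k\in\mathbb{N}$ lies in the index set of the infimum.
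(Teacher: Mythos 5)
Your proof is correct and follows exactly the paper's own argument: monotonicity of the eigenvalue sequence \eqref{eq:sequence} gives $\left|\lambda_{k}\left(T\right)\right|\leq\left(\prod_{m=1}^{k}\left|\lambda_{m}\left(T\right)\right|\right)^{1/k}$, and then Carl's inequality (Theorem \ref{thm:Carl Inequality}) specialized to $n=k$ yields the factor $2^{\frac{k}{2k}}=\sqrt{2}$. Nothing is missing; your write-up is, if anything, slightly more explicit than the paper's about why the choice $n=k$ is admissible.
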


\begin{proof}
Let $k\in\mathbb{N}$. With \eqref{eq:sequence} we get

\[
\vert\lambda_{k}\left(T\right)\vert=\vert\lambda_{k}\left(T\right)\vert^{k\cdot\frac{1}{k}}=\left(\prod_{i=1}^{k}\vert\lambda_{k}\left(T\right)\vert\right)^{1/k}\leq\left(\prod_{m=1}^{k}\vert\lambda_{m}\left(T\right)\vert\right)^{1/k}.
\]

The estimate follows immediately, if we set $k=n$ in Theorem \ref{thm:Carl Inequality}.
\end{proof}
\begin{rem}
The above inequality was first discovered by \cite[Thm. 4]{Carl}
in the context of Banach spaces. Using this result it can be shown
that

\[
\lim_{n\rightarrow\infty}\left(e_{k}\left(T^{n}\right)\right)^{1/n}=r(T)\,\,\,\mbox{for }k\in\mathbb{N}.
\]

(See \cite[Subsect. 1.3.4., Rem. 1]{Edmunds and Triebel}.)
\end{rem}

Let us now examine the relationship between eigenvalues of an operator
$T\in\mathcal{K}\left(\mathbb{X}\right)$, where $\mathbb{X}$ denotes
an arbitrary complex Banach space and the approximation numbers. It
has been shown by \cite[Prop. 2.d.6., p. 134]{K=0000F6nig} that for
$n\in\mathbb{N}$

\[
\vert\lambda_{n}\left(T\right)\vert=\lim_{k\rightarrow\infty}a_{n}^{1/k}\left(T^{k}\right),
\]

as well as for $p\in\left(0,\infty\right)$, that for some constant
$K_{p}$

\[
\left(\sum_{k=1}^{N}\vert\lambda_{k}\left(T\right)\vert^{p}\right)^{1/p}\leq K_{p}\left(\sum_{k=1}^{N}\left[a_{k}\left(T\right)\right]^{p}\right)^{1/p}.
\]

\section{Hilbert Space Setting and the Inequality of Weyl }

At last, let us study the Hilbert space setting. It is only natural
that better results arise in this case. Since we are now dealing with
Hilbert spaces, we can talk about inner products and therefore about
adjoint operators. (For further information about the adjoint operator,
see \cite[Subsect. 2.2.3.]{H=0000F6here Analysis}). 
\begin{defn}
Let $\mathbb{H}$ be a Hilbert space and $T\in\mathcal{L}\left(\mathbb{H}\right)$.
We will denote the adjoint operator of $T$ with $T^{*}$ and call
$T$ self-adjoint if $T=T^{*}.$
\end{defn}

\begin{prop}
Let $\mathbb{H}$ be a Hilbert space and $T\in\mathcal{K}\left(\mathbb{H}\right)$be
self-adjoint. For given $n\in\mathbb{N}$, we have

\[
\vert\lambda_{n}\left(T\right)\vert=a_{n}\left(T\right),
\]

where $\lambda_{n}\left(T\right)$ is the $n$ - th eigenvalue of
the eigenvalue-sequence of $T$.
\end{prop}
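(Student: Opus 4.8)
The plan is to reduce everything to the spectral theorem for compact self-adjoint operators. That theorem furnishes an orthonormal system $\left(e_k\right)$ in $\mathbb{H}$ together with real eigenvalues $\lambda_k=\lambda_k\left(T\right)$, ordered so that $\vert\lambda_1\vert\geq\vert\lambda_2\vert\geq\ldots\geq0$ and counted with multiplicity in accordance with the eigenvalue sequence attached to $T$ via Proposition \ref{pro:Eigenvalues}, such that $Tx=\sum_{k}\lambda_k\langle x,e_k\rangle e_k$ for all $x\in\mathbb{H}$. With this representation at hand the proof splits into the two inequalities $a_n\left(T\right)\leq\vert\lambda_n\left(T\right)\vert$ and $a_n\left(T\right)\geq\vert\lambda_n\left(T\right)\vert$.

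For the upper estimate I would exhibit an explicit competitor in the infimum of Definition \ref{def:Approximationszahlen}, namely the truncation $S:=\sum_{k=1}^{n-1}\lambda_k\langle\cdot,e_k\rangle e_k$, which satisfies $\mbox{rank }S<n$. Since $\left(T-S\right)x=\sum_{k\geq n}\lambda_k\langle x,e_k\rangle e_k$, Parseval's identity combined with $\vert\lambda_k\vert\leq\vert\lambda_n\vert$ for $k\geq n$ gives $\Vert\left(T-S\right)x\Vert^2=\sum_{k\geq n}\vert\lambda_k\vert^2\vert\langle x,e_k\rangle\vert^2\leq\vert\lambda_n\vert^2\Vert x\Vert^2$. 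Hence $\Vert T-S\Vert\leq\vert\lambda_n\vert$, and taking the infimum over admissible operators yields $a_n\left(T\right)\leq\vert\lambda_n\left(T\right)\vert$.

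For the lower estimate let $S\in\mathcal{L}\left(\mathbb{H}\right)$ be arbitrary with $\mbox{rank }S<n$, and set $E_n:=\mbox{span}\left\{e_1,\ldots,e_n\right\}$, an $n$-dimensional subspace. Because $\dim E_n=n>\mbox{rank }S$, a dimension count forces the restriction $S|_{E_n}$ to have nontrivial kernel, so there is a unit vector $x=\sum_{k=1}^{n}c_k e_k\in E_n$ with $Sx=0$. For this $x$, orthonormality together with $\vert\lambda_k\vert\geq\vert\lambda_n\vert$ for $k\leq n$ gives $\Vert\left(T-S\right)x\Vert=\Vert Tx\Vert=\left(\sum_{k=1}^{n}\vert\lambda_k\vert^2\vert c_k\vert^2\right)^{1/2}\geq\vert\lambda_n\vert\Vert x\Vert=\vert\lambda_n\vert$. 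Thus $\Vert T-S\Vert\geq\vert\lambda_n\vert$, and passing to the infimum over all such $S$ yields $a_n\left(T\right)\geq\vert\lambda_n\left(T\right)\vert$. Combining the two inequalities closes the argument.

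The step I expect to be the main obstacle is not either estimate, both of which are short, but the careful justification of the spectral decomposition in the present bookkeeping. Concretely, one must argue that for a self-adjoint compact operator the algebraic multiplicities entering the eigenvalue sequence of Proposition \ref{pro:Eigenvalues} coincide with the geometric ones, so that the orthonormal family $\left(e_k\right)$ reproducing that sequence genuinely exists and the listing of $\vert\lambda_k\vert$ used above really matches the eigenvalue sequence of $T$. Once this standard spectral fact is granted, the dimension-counting and truncation steps deliver the equality immediately.
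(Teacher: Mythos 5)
Your proof is correct, but note that the paper does not actually prove this proposition at all: it simply defers to Carl--Stephani \cite[Sect. 4.4., Prop. 4.4.1.]{Carl=000026Stephani}. What you have written is, in essence, the standard argument from that reference, so you have supplied the content the thesis omits. Both halves are sound: the truncation $S=\sum_{k=1}^{n-1}\lambda_{k}\langle\cdot,e_{k}\rangle e_{k}$ gives $a_{n}(T)\leq\vert\lambda_{n}\vert$ via Parseval, and the rank--nullity argument on $E_{n}=\mbox{span}\{e_{1},\ldots,e_{n}\}$ gives the reverse inequality. Two small points deserve explicit mention. First, the multiplicity issue you flag at the end is genuine but easy to settle: for self-adjoint $T$ and real $\lambda$, $(T-\lambda\,\mbox{id})^{2}x=0$ implies $\Vert(T-\lambda\,\mbox{id})x\Vert^{2}=\langle(T-\lambda\,\mbox{id})^{2}x,x\rangle=0$, so algebraic and geometric multiplicities coincide and the orthonormal system from the spectral theorem reproduces exactly the eigenvalue sequence of Proposition \ref{pro:Eigenvalues}. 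Second, your lower estimate tacitly assumes the system $(e_{k})$ has at least $n$ members; if $T$ has fewer than $n$ nonzero eigenvalues (counted with multiplicity), then $\mbox{rank }T<n$, so $a_{n}(T)=0=\vert\lambda_{n}(T)\vert$ by the paper's convention, and the claim holds trivially. With these two remarks added, your argument is a complete and self-contained proof, which is more than the thesis itself provides.
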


\begin{proof}
For the proof, see \cite[Sect. 4.4., Prop. 4.4.1.]{Carl=000026Stephani}.
\end{proof}
\begin{rem}
If we additionally demand that $T$ is a non-negative operator (that
is to say, that $\left\langle Tx,x\right\rangle \geq0$ for all $x\in\mathbb{H}$)
we even get the equality $\lambda_{n}\left(T\right)=a_{n}\left(T\right)$.
(See \cite[Subsect. 1.3.4., Rem. 1.]{Edmunds and Triebel})
\end{rem}

The perhaps most popular inequality concerning a relationship between
approximation numbers and eigenvalues of an operator is the inequality
of Weyl. 
\begin{thm}
(Inequality of Weyl)

Let $\mathbb{H}$ be a Hilbert space and $T\in\mathcal{K}\left(\mathbb{H}\right)$
with its eigenvalue sequence $\left(\lambda_{k}\left(T\right)\right)_{k\in\mathbb{N}}$.
Then for given $n\in\mathbb{N}$ we have

\[
\prod_{i=1}^{n}\vert\lambda_{i}\left(T\right)\vert\leq\prod_{i=1}^{n}a_{i}\left(T\right)
\]

and even equality if $\dim\mathbb{H}=n<\infty$. Furthermore, for
given $p\in\left(0,\infty\right)$, we get

\[
\sum_{i=1}^{n}\vert\lambda_{i}\left(T\right)\vert^{p}\leq\sum_{i=1}^{n}\left[a_{i}\left(T\right)\right]^{p}.
\]
\end{thm}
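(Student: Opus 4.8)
The plan is to reduce everything to two classical ingredients: the identification of the approximation numbers with the singular values of $T$, and the behaviour of both eigenvalues and singular values under antisymmetric tensor powers. As a first step I would establish that in the Hilbert space setting $a_{n}(T)=s_{n}(T)$, the $n$-th singular value of $T$, i.e. the $n$-th eigenvalue of the non-negative compact self-adjoint operator $|T|:=(T^{*}T)^{1/2}$. Writing the polar decomposition $T=U|T|$ with $U$ a partial isometry and applying the ideal property contained in (P$_{a}$) of Theorem \ref{thm:Properties a} (with $m=1$ this reads $a_{n}(RS)\le\Vert R\Vert\,a_{n}(S)$ and $a_{n}(SR)\le a_{n}(S)\,\Vert R\Vert$), one obtains $a_{n}(T)=a_{n}(|T|)$; the Proposition preceding this theorem then gives $a_{n}(|T|)=\vert\lambda_{n}(|T|)\vert=s_{n}(T)$. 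Equivalently, the Schmidt representation $Tx=\sum_{k}s_{k}\langle x,e_{k}\rangle f_{k}$ exhibits the best rank-$(n-1)$ approximation by truncation, which again yields $a_{n}(T)=s_{n}(T)$.

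Next I would introduce the $n$-th antisymmetric tensor power $\Lambda^{n}T$ acting on $\Lambda^{n}\mathbb{H}$, and use two facts. First, $\Vert\Lambda^{n}T\Vert=\prod_{i=1}^{n}s_{i}(T)=\prod_{i=1}^{n}a_{i}(T)$: this follows from the Schmidt representation, since $\Lambda^{n}T$ is again compact with singular values the products $s_{i_{1}}\cdots s_{i_{n}}$ over $i_{1}<\dots<i_{n}$, the largest being $s_{1}\cdots s_{n}$. Second, the non-zero eigenvalues of $\Lambda^{n}T$ are exactly the products $\lambda_{i_{1}}(T)\cdots\lambda_{i_{n}}(T)$ with $i_{1}<\dots<i_{n}$, so that, the sequence $(\lambda_{i})$ being ordered by decreasing modulus, the eigenvalue of $\Lambda^{n}T$ of largest modulus is $\prod_{i=1}^{n}\lambda_{i}(T)$. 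Since the modulus of any eigenvalue is bounded by the operator norm (the spectral radius never exceeds the norm), combining the two facts gives
\[
\prod_{i=1}^{n}\vert\lambda_{i}(T)\vert\le\Vert\Lambda^{n}T\Vert=\prod_{i=1}^{n}a_{i}(T),
\]
which is the multiplicative Weyl inequality. In the case $\dim\mathbb{H}=n<\infty$ the space $\Lambda^{n}\mathbb{H}$ is one-dimensional and $\Lambda^{n}T$ acts as the scalar $\det T=\prod_{i=1}^{n}\lambda_{i}(T)$, whose modulus is $\vert\det T\vert=\sqrt{\det(T^{*}T)}=\prod_{i=1}^{n}s_{i}(T)$, forcing equality.

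Finally I would pass from the multiplicative to the additive (i.e. $p$-th power) statement by a majorisation argument. The family of inequalities $\prod_{i=1}^{m}\vert\lambda_{i}(T)\vert\le\prod_{i=1}^{m}a_{i}(T)$, valid for every $m$, is a log-majorisation of the two decreasing non-negative sequences; the classical majorisation lemma of Hardy, Littlewood and P\'olya then yields $\sum_{i=1}^{n}g(\vert\lambda_{i}(T)\vert)\le\sum_{i=1}^{n}g(a_{i}(T))$ for every increasing $g$ with $t\mapsto g(e^{t})$ convex. Choosing $g(t)=t^{p}$, so that $g(e^{t})=e^{pt}$ is convex, gives
\[
\sum_{i=1}^{n}\vert\lambda_{i}(T)\vert^{p}\le\sum_{i=1}^{n}\left[a_{i}(T)\right]^{p}.
\]

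I expect the main obstacle to be the rigorous set-up of the antisymmetric tensor power in the infinite-dimensional Hilbert space together with the verification of the two facts above, in particular that $\Lambda^{n}T$ is compact with precisely the stated eigenvalues and singular values; this rests on the Schmidt (spectral) representation of compact operators and some multilinear algebra. The Hardy--Littlewood--P\'olya lemma used in the last step, while standard, likewise requires a careful monotone-rearrangement proof. As an alternative to building this machinery from scratch, one may invoke the structure theory already collected here and cite \cite[Sect. 4.4.]{Carl=000026Stephani} for these technical facts.
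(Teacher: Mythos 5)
Your proposal cannot be matched line-by-line against the paper's argument for the simple reason that the paper contains no argument: for this theorem it only refers to \cite[Sect. 4.4., Prop. 4.4.2.]{Carl=000026Stephani}. What you have written is a correct, self-contained version of Weyl's classical proof, and its structure is sound: the identification $a_{n}(T)=s_{n}(T)$ via polar decomposition and the ideal property contained in (P$_{a}$) of Theorem \ref{thm:Properties a} (taking $m=1$ there), the exterior power $\Lambda^{n}T$ whose norm is $\prod_{i=1}^{n}a_{i}(T)$ and whose eigenvalue of largest modulus is $\prod_{i=1}^{n}\lambda_{i}(T)$, the bound of spectral radius by operator norm, and finally the Hardy--Littlewood--P\'olya majorisation lemma with $g(t)=t^{p}$ to pass from the multiplicative inequalities (for all $m\le n$) to the additive one. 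For comparison, the proof in the cited book reaches the multiplicative inequality by a slightly different mechanism: it restricts $T$ to the $n$-dimensional Riesz principal subspace $M_{n}$ belonging to $\lambda_{1},\dots,\lambda_{n}$, uses the determinant identity $\vert\det\left(T|_{M_{n}}\right)\vert=\prod_{i=1}^{n}a_{i}\left(T|_{M_{n}}\right)$ on that invariant subspace, and then the monotonicity $a_{i}\left(T|_{M_{n}}\right)\leq a_{i}(T)$; the exterior power is the coordinate-free form of the same determinant idea, and the majorisation step is common to both. Your route states the two key spectral facts more transparently but requires setting up antisymmetric tensor powers of an infinite-dimensional Hilbert space; the principal-subspace route avoids multilinear algebra but needs the Riesz spectral decomposition to produce the invariant subspace (which, incidentally, is also the cleanest way to verify your claim that $\prod_{i=1}^{n}\lambda_{i}(T)$ really is an eigenvalue of $\Lambda^{n}T$ when Jordan-type blocks occur). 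Two small points to watch: if $T$ has fewer than $n$ non-zero eigenvalues, the identification of the largest eigenvalue of $\Lambda^{n}T$ degenerates, but then the left-hand side vanishes by the convention \eqref{eq:sequence} and the inequality is trivial; and in the majorisation step the possible vanishing of some $\vert\lambda_{i}\vert$ needs the usual limiting (or $\log 0=-\infty$) convention.
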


\begin{proof}
For the proof see \cite[Sect. 4.4., Prop. 4.4.2.]{Carl=000026Stephani}.
\end{proof}
\pagebreak{}
%---------------------------------------------------

\end{document}